\newcommand*{\mailto}[1]{\href{mailto:#1}{\nolinkurl{#1}}}
\newcommand{\Dx}{{\Delta x}}
\newcommand{\R}{\mathbb{R}}
\renewcommand{\id}{\text{\normalfont{id}}}
\newcommand{\arxiv}[1]{\href{https://arxiv.org/pdf/#1}{arXiv:#1}}
\newcommand{\D}{\mathcal{D}}
\newcommand{\F}{\mathcal{F}}
\newcommand{\M}{\mathcal{M}}
\DeclareMathOperator{\sgn}{sgn}
\DeclareMathOperator*{\argmin}{arg\,min}
\DeclareMathOperator{\arcosh}{arcosh}
\apptocmd{\lim}{\limits}{}{}
\numberwithin{equation}{section}
{
    \theoremstyle{plain}
    \newtheorem{definition}{Definition}[section]
    \newtheorem{remark}[definition]{Remark}
    \newtheorem{prop}[definition]{Proposition}
    \newtheorem{example}[definition]{Example}

    \newtheorem{theorem}[definition]{Theorem}
    \newtheorem{corollary}[definition]{Corollary}
    \newtheorem{lemma}[definition]{Lemma}   
}
\title[Numerical algorithm for the HS equation]{ A convergent numerical algorithm for \fontsize{15}{17}{$\alpha$}-dissipative solutions of the Hunter--Saxton equation}
\author[T. Christiansen]{Thomas Christiansen}
\address{Department of Mathematical Sciences\\ NTNU Norwegian University of Science and Technology\\ NO-7491 Trondheim\\ Norway}
\email{\mailto{thomas.christiansen@ntnu.no}}
\urladdr{\url{https://www.ntnu.edu/employees/thomachr}}
\author[K. Grunert]{Katrin Grunert}
\address{Department of Mathematical Sciences\\ NTNU Norwegian University of Science and Technology\\ NO-7491 Trondheim\\ Norway}
\email{\mailto{katrin.grunert@ntnu.no}}
\urladdr{\url{https://www.ntnu.edu/employees/katrin.grunert}}
\author[A. Nordli]{Anders Nordli}
\address{Department of Automation and Process Engineering\\ UiT - The Arctic University of Norway\\ Troms\o \\ Norway}
\email{\mailto{anders.s.nordli@uit.no}}
\author[S. Solem]{Susanne Solem}
\address{Department of Mathematics\\ NMBU Norwegian University of Life Sciences\\ NO-1432 {\AA}s \\ Norway}
\email{\mailto{susanne.solem@nmbu.no}}
\urladdr{\url{https://www.nmbu.no/med/susanne.solem}}
\thanks{Research supported by the grants {\it Waves and Nonlinear Phenomena (WaNP)} and {\it Wave Phenomena and Stability --- a Shocking Combination (WaPheS)} from the Research Council of Norway.}  
\subjclass[2020]{Primary: 65M12, 65M25; Secondary: 65M06, 35Q35}
\keywords{Hunter--Saxton equation, projection operator, conservative solutions, numerical method, convergence, $\alpha$-dissipative solutions}
\begin{document}
\maketitle

\counterwithin{equation}{section}
\raggedbottom

\begin{abstract}
   A convergent numerical method for $\alpha$-dissipative solutions of the Hunter--Saxton equation is derived. The method is based on applying a tailor-made projection operator to the initial data, and then solving exactly using the generalized method of characteristics. The projection step is the only step that introduces any approximation error. It is therefore crucial that its design ensures not only a good approximation of the initial data, but also that errors due to the energy dissipation at later times remain small. 
 Furthermore, it is shown that the main quantity of interest, the wave profile, converges in $L^{\infty}$ for all $t \geq 0$, while a subsequence of the energy density converges weakly for almost every time. 
\end{abstract}
\enlargethispage{1.5pt}

\section{Introduction}
In this article, we present a numerical algorithm for $\alpha$-dissipative solutions of the Cauchy problem for the Hunter--Saxton (HS) equation 
\begin{align}
    u_t(t,x) + uu_x(t,x) &= \frac{1}{4}\int_{-\infty}^xu_x^2(t,z)dz - \frac{1}{4}\int_{x}^{\infty}u_x^2(t,z)dz, \quad u|_{t=0} = u_0. 
    \label{eq:Hunter-Saxton}
\end{align}
The equation was derived as an asymptotic model of the director field of a nematic liquid crystal \cite{DynamicsDirector}, and possesses a rich mathematical structure. We mention a few of its properties here: it is bi-Hamiltonian and admits a Lax pair \cite{PropertiesHS}, it can be interpreted as a geodesic flow \cite{GeodesicLenells1, GeodesicLenells2}, and it admits numerous extensions and generalizations \cite{generalizedHS, PeriodicMu2HS, Generalized2HS}. 

A lot of the interest for \eqref{eq:Hunter-Saxton} is generated by the fact that weak solutions in general will develop singularities in finite time, and, consequently, they are not unique, see \cite{UniquenessConservative, alphaHS,  DynamicsDirector, HunterHyperbolicEquation}. This phenomenon is known as wave breaking. In particular, $u_{x} \rightarrow -\infty$ pointwise while the energy $\|u_x (t, \cdot)\|_2$ remains uniformly bounded, and the solution $u$, itself, H{\"o}lder continuous. Furthermore, at wave breaking, energy concentrates on a set of measure zero. Thus the energy density is in general not absolutely continuous. To overcome this problem it is common to augment the solution with a finite, positive Radon measure $\mu$ describing the energy density, see \cite{UniquenessConservative, alphaHS, LagrangianViewCH}. In particular, $\mu$ coincides with the usual kinetic energy density $u_x^2$ in regions where there is no wave breaking, thus, $d\mu_{\text{ac}}=u_x^2dx$. The energy is then described by $F(x) = \mu((-\infty, x))$. 

Weak solutions can be extended past wave breaking in various ways, see for instance \cite{BressanDissipative, UniquenessConservative, alphaHS, HunterHyperbolicEquation}. The two most prominent notions in the literature are that of a dissipative solution, where one removes all  the concentrated energy from the system, and that of a conservative solution, where one reinserts the concentrated energy. In this work, we consider the concept of $\alpha$-dissipative solutions, first introduced in \cite{alphaCH} for the related Camassa--Holm equation, and in \cite{alphaHS} for \eqref{eq:Hunter-Saxton}. Instead of removing all the concentrated energy or none of it, an $\alpha$-fraction, where $\alpha \in [0, 1]$, is removed. This way, the notion of $\alpha$-dissipative solutions acts as a continuous interpolant between the two extreme cases, $\alpha=0$ corresponding to conservative solutions, and $\alpha=1$ corresponding to dissipative solutions. Thus, the notion of $\alpha$-dissipative solutions allows for a uniform treatment of weak solutions with nonincreasing energy. The existence of $\alpha$-dissipative solutions was established in \cite{alphaHS} for the two-component Hunter--Saxton system which generalizes \eqref{eq:Hunter-Saxton}, and in the more general setting where $\alpha \in W^{1, \infty}(\R, [0, 1)) \cup \{1 \}$.

A common approach for solving the Cauchy problem \eqref{eq:Hunter-Saxton} is to use a generalized method of characteristics, see for instance \cite{UniquenessConservative, alphaHS}. 
This approach is followed here. In particular, given an $\alpha$-dissipative solution, the corresponding Lagrangian coordinates are governed by a linear system of differential equations, whose right hand side admits discontinuities at fixed times. These times can be computed a priori based on the initial data. Whence, if the initial data in Lagrangian coordinates is piecewise linear, the Lagrangian coordinates will remain piecewise linear at all later times, an important property which we take advantage of in this work.

Despite receiving a considerable amount of attention from a theoretical perspective, only a few number of numerical schemes have been proposed for the HS equation. In \cite{NumericalFDSchemes} several finite difference schemes for dissipative solutions were proved to converge. Also for dissipative solutions, \cite{NumericalDGDissipative} introduces a convergent, discontinuous Galerkin method. Furthermore, in \cite{SatoNumerical} a finite difference scheme on a periodic domain for a modified HS equation was derived and proven to converge towards conservative solutions. More recently, a Godunov-inspired scheme \cite[Chp. 12.1]{FiniteVolumeMethodsBook} for conservative solutions based on tracking the solution along characteristics, was introduced in \cite{NumericalConservative}. The scheme was proved to converge and a convergence rate prior to wave breaking was derived. 

We contribute to this line of research by introducing a numerical algorithm well-suited for $\alpha$-dissipative solutions. This algorithm is based on applying a tailor-made projection operator to the initial data, and then solving exactly along characteristics. Thus, the projection step is the only step that introduces any approximation error and it is therefore of particular importance that it not only yields a good approximation of the initial data, but also ensures that additional prospective errors, due to the energy dissipation, remain small and hence do not prevent convergence. 

To highlight the importance of the correct choice of the projection operator, we compare our projection operator with the one introduced in \cite{NumericalConservative}. 
Motivated by the fact that a piecewise linear structure of the initial data is preserved at all later times, see \cite{InverseScatteringHS, PropertiesHS}, the numerical scheme in \cite{NumericalConservative} uses a standard piecewise linear projection operator in Eulerian coordinates at every timestep. In between timesteps the numerical solution is evolved exactly along characteristics. 
The projection operator used treats $u$ and $F$ completely independently, although $u$ and $\mu$ are strongly connected through the absolutely continuous part. Consequently, a deviation is introduced in the sense that $d\mu_{\Delta x, \mathrm{ac}} \neq u_{\Delta x, x}^2dx$, where $\Delta x$ denotes the spatial discretization parameter. Thus, as pointed out in \cite{NumericalConservative}, one is no longer dealing with the Hunter--Saxton equation, but rather a reformulated version of the two-component Hunter--Saxton system \cite{Conservative2HS}, which is accompanied by a density $\rho$. 
Or, in other words, the projection operator maps into the Eulerian set for the two-component Hunter-Saxton equation, see \cite[Def. 2.2]{Conservative2HS}, for which it has been established that $\rho$ has a regularizing effect. In particular, if $\rho > 0$ in some interval $[x_1, x_2]$, then the solution will never experience wave breaking in the interval spanned by the characteristics emanating from $[x_1, x_2]$, see \cite[Thm 7.1]{GlobalDissipative2CH}. While this works out neatly in the setting of conservative solutions, since no energy is lost, for $\alpha$-dissipative solutions we are highly dependent on wave breaking actually occurring in order to remove concentrated energy. Thus, more care is needed to derive a suitable projection operator. That is why we introduce a piecewise linear projection operator $P_{\Delta x}$, which in contrast to the the one in \cite{NumericalConservative}, ensures that the projected initial data satisfies $d\mu_{\Delta x, \mathrm{ac}} = u_{\Delta x, x}^2dx$. 

We accompany the numerical algorithm with a convergence analysis, 
which shows that the limit is a weak $\alpha$-dissipative solution. In particular, we obtain that $u_{\Delta x} \rightarrow u$ in $L^{\infty}(\R)$ for all $t \geq 0$, which is the main quantity of interest in practice, while a subsequence of the energy measure $\mu_\Dx(t)$ converges weakly for almost every time. 

The outline of the paper is as follows. In Section~\ref{sec:prelim} we set the stage for the Cauchy problem of \eqref{eq:Hunter-Saxton} by defining the Eulerian set $\D$, the Lagrangian set $\F$, the mappings $L$ and $M$ between them and the notion of an $\alpha$-dissipative solution. Then in Section \ref{sec:numAlgorithm} we focus on deriving and motivating our choice of the projection operator $P_{\Delta x}$, before we discuss the practical implementation of our numerical algorithm. In Section \ref{sec:convAnalysis} we conduct the convergence analysis. In the last section, Section~\ref{sec:Numex}, we provide a few numerical experiments to illustrate the theoretical results and to go beyond the theory established here by investigating the convergence rate. 

\section{Preliminaries}\label{sec:prelim}
In this section, we set the stage for the numerical algorithm by defining the sets we are working in, as well as recalling the construction of $\alpha$-dissipative solutions by using a generalized method of characteristics, as introduced in \cite{alphaHS} and used in \cite{tandy_lipschitz}.

Assume that $\alpha\in [0,1]$ and denote by $\M^+(\R)$ the space of positive, finite Radon measures on $\R$. To define the set of Eulerian coordinates, we first need to recall some important spaces from \cite{alphaHS} and \cite{tandy_lipschitz}.

Introduce 
\begin{align*}
    E &:= \left \{ f \in L^{\infty}(\R): f' \in L^2(\R) \right\},
\end{align*}
which is a Banach space when equipped with the norm $$\|f \|_{E} = \|f \|_{\infty} + \|f' \|_{2}.$$ 
Furthermore, define
\begin{align*}
    H_d^1(\R) &:= H^1(\R) \times \R^d, \quad d = 1, 2 
\end{align*}
and introduce a partition of unity $\chi^+$ and $\chi^-$ on $\R$, i.e., a pair of functions $\chi^+, \chi^- \in C^{\infty}(\R)$ such that
\begin{itemize}
    \item $\chi^+ + \chi^- = 1$,
    \item $0 \leq \chi^{\pm} \leq 1$,
    \item $\text{supp}(\chi^+) \subset (-1, \infty)$ and $\text{supp}(\chi^-) \subset (-\infty, 1)$.
\end{itemize}
Then, we can define the following mappings 
\begin{align*}
    R_1: H_1^1(\R) \rightarrow E, & \qquad  (\bar{f}, a) \mapsto f= \bar{f} + a \chi^+, \\
    R_2: H_2^1(\R) \rightarrow E, & \qquad  (\bar{f}, a, b) \mapsto f= \bar{f} + a \chi^+ + b \chi^-,
\end{align*} 
which are linear, continuous, and injective, see \cite{HSconser}. Based on those, we introduce the Banach spaces $E_1$ and $E_2$ as the images of $H_1^1(\R)$ and $H_2^1(\R)$, respectively, that is, 
\begin{align*}
    E_1 := R_1 \left (H_1^1(\R) \right)  \quad \text{and} \quad 
    E_2 := R_2 \left(H_2^1(\R) \right) 
\end{align*}
and endow them with the norms
\begin{align*}
    \|f\|_{E_1} &:= \|\bar{f} + a \chi^+ \|_{E_1} = \left( \|\bar{f}\|_{H^1(\R)}^2 + a^2 \right )^{\frac{1}{2}}, \\
    \|f\|_{E_2} &:= \|\bar{f} + a \chi^+ + b\chi^- \|_{E_2} = \left ( \|\bar{f}\|_{H^1(\R)}^2 + a^2 + b^2\right )^{\frac{1}{2}}.
\end{align*}
 By construction the spaces $E_1$ and $E_2$ do not rely on the particular choice of $\chi^+$ and $\chi^-$, cf. \cite{nonvanCH}. Furthermore, observe that the mapping $R_1$ is also well-defined for functions in $L_1^2(\R) = L^2(\R) \times \R$ and therefore, we set
\begin{align*}
    E_1^0 := R_1 \left(L_1^2(\R) \right),
\end{align*}
and equip this space with the norm
\begin{align*}
    \|f \|_{E_1^0} &:= \|\bar{f} + a \chi^+ \|_{E_1^0} = \left( \|\bar{f} \|_2^2 + a^2 \right)^{\frac{1}{2}}.
\end{align*}

At last, we can define the set of Eulerian coordinates, $\D$, in which we also seek numerical solutions.

\begin{definition}\label{def:EulerianSet}
    The space $\D$ consists of all triplets $(u, \mu, \nu)$ such that
    \begin{enumerate}[label=(\roman*)]
        \item $u \in E_2$, 
        \item $\mu \leq \nu \in \M^{+}(\R)$,
       \item $\mu_{\mathrm{ac}}\leq \nu_{\mathrm{ac}}$,
        \item $d\mu_{\mathrm{ac}} = u_x^2dx$,\label{def:EulerianSetlist:condition3}
        \item $\mu\left( (-\infty, \cdot) \right) \in E_1^0$,
        \item $\nu \left((-\infty, \cdot) \right)  \in E_1^0$,
    \label{def:EulerianSetlist:condition5}
        \item If $\alpha = 1$, then $d\nu_{\mathrm{ac}}=d\mu = u_x^2dx$,
        \item If $\alpha \in [0, 1)$, then $\frac{d\mu}{d\nu}(x) \in \{1 - \alpha, 1 \}$, and $\frac{d\mu_{\mathrm{ac}}}{d\nu_{\mathrm{ac}}}(x) = 1$ if $u_x(x) < 0$. 
    \end{enumerate}
\end{definition}

As $\mu$, $\nu \in \M^+(\R)$, we can define the primitive functions $F(x) = \mu((-\infty, x))$ and $G(x) = \nu((- \infty, x))$. These are bounded, increasing, left-continuous and satisfy
\begin{align*}
    \lim_{x \rightarrow - \infty}F(x) &= \lim_{x \rightarrow - \infty} G(x) = 0. 
\end{align*}
We will interchangeably use the notation $(u, F, G)$ and $(u, \mu, \nu)$ to refer to the same triplet in $\D$, since by \cite[Thm. 1.16]{RealAnalysisFolland}, there is a one-to-one correspondence between $(F, G)$ and $(\mu, \nu)$.

Moreover, for practical purposes, we will often restrict the initial data to belong to 
\begin{align*}
    \D_0 :&= \{ (u, \mu, \nu) \in \D: \mu = \nu \} \subset \D. 
\end{align*}
 
Let $B=E_2 \times E_2 \times E_1 \times E_1$ endowed with the norm 
\begin{align*}
   \|(f_1, f_2, f_3, f_4) \|_B := \|f_1 \|_{E_2} + \|f_2 \|_{E_2} + \|f_3 \|_{E_1} + \|f_4 \|_{E_1},
\end{align*}
then the set of Lagrangian coordinates, $\F$, is defined as follows.

\begin{definition}\label{def:DefinitionLagrangian} The set $\F$ consists of all quadruplets $X = (y, U, V, H)$ with $(y-\id, U, V, H) \in B$ such that
\begin{enumerate}[label=(\roman*)]
  \item $(y-\id, U, V, H) \in \big[W^{1, \infty}(\R) \big]^4$,
  \item $ y_{\xi}, H_{\xi} \geq 0$ and there exists $c > 0$ such that $y_{\xi} + H_{\xi} \geq c $ holds a.e.,
  \item \label{def:DefinitionLagrangian:ImporantRel} $y_{\xi}V_{\xi}= U_{\xi}^2 $ a.e., 
  \item $0 \leq V_{\xi} \leq H_{\xi}$ a.e., 
  \item If $\alpha = 1$, then $y_{\xi}(\xi)=0$ implies that $V_{\xi}(\xi)=0$, and $y_{\xi}(\xi) > 0$ implies that $V_{\xi}(\xi) = H_{\xi}(\xi)$ a.e.,
  \item If $\alpha \in [0, 1)$, then there exists a function $\kappa: \R \rightarrow \{1 - \alpha, 1 \}$ such that $V_{\xi}(\xi) = \kappa(\xi)H_{\xi}(\xi)$ a.e., and $\kappa(\xi)=1$ whenever $U_{\xi}(\xi) < 0$. 
\end{enumerate}
\end{definition}
In the convergence analysis, the following subsets of $\F$ will play an important role
\begin{align*}
    \F^0 &= \{X \in \F: H(\xi) = V(\xi) \text{ for all }\xi \in \R \}, \\
    \F_0 &= \{ X \in \F: y + H = \id \},
\end{align*}
 and  $$\F_{0}^0 = \F^0 \cap \F_0.$$ 

To construct the $\alpha$-dissipative solution using a generalized method of characteristics means to study the time evolution in Lagrangian rather than in Eulerian coordinates, and therefore the mappings between $\D$ and $\F$ are an essential part.

\begin{definition} \label{def:MappingL}Let $L: \D \rightarrow \F_0$ be defined by $L \left((u, \mu, \nu) \right)= (y, U, V, H)$, where
\begin{subequations}
\begin{alignat}{3}
             y(\xi) &= \sup \{x \in \R: x + \nu((-\infty, x)) < \xi \}, \label{eq:L_eq1}\\
    U(\xi) &= u (y(\xi)), \label{eq:L_eq2} \\
    H(\xi) &= \xi - y(\xi), \label{eq:L_eq3} \\
    V(\xi) &= \int_{-\infty}^{\xi} \frac{d\mu}{d\nu} (y(\eta)) H_{\xi}(\eta) d\eta 
    \nonumber.
        \end{alignat}
\end{subequations}
\end{definition}
Next we introduce the mapping taking us from Lagrangian to Eulerian coordinates. To this end, recall that the pushforward of a Borel measure $\lambda$ by a measurable function $f$ is the measure $f_{\#}\lambda$ defined for all Borel sets $A\subset \R$ by 
\begin{equation*}
	(f_{\#}\lambda)(A) = \lambda(f^{-1}(A)).
\end{equation*}

\begin{definition}\label{def:MappingM} Define $M: \F \rightarrow \D$ by $M((y, U, V, H)) = (u, \mu, \nu)$, where 
\begin{subequations}
\begin{align*}
    u(x) &= U(\xi) \quad \text{ for all } \xi \in \mathbb{R} \text{ such that } x = y(\xi),  \\
    \mu &= y_{\#} \big(V_{\xi}d\xi \big), \\
    \nu &= y_{\#} \big(H_{\xi} d\xi \big). 
    \end{align*}
\end{subequations}
\end{definition}

For proofs that these mappings are well-defined we refer to \cite[Prop. 2.1.5 and 2.1.7]{phdthesisNordli}. Furthermore, it should be noted that the triplets $(u,\mu,\nu)$ are mapped to quadruplets $(y,U,V,H)$ and hence there cannot be a one-to-one correspondence between Eulerian and Lagrangian coordinates. However, as pointed out in \cite{phdthesisNordli}, one can identify equivalence classes in Lagrangian coordinates, so that each equivalence class corresponds to exactly one element in Eulerian coordinates. 

Moreover, it should be pointed out that all the important information in Eulerian coordinates is encoded in the pair $(u,\mu)$, and hence contained in the triplet $(y,U,V)$ in Lagrangian coordinates. In contrast, the mapping $L$ relies heavily on $\nu$ and hence changing $\nu$ changes not only $H$, but also $(y,U,V)$. 

Finally, we can turn our attention to the time evolution. The $\alpha$-dissipative solution in Lagrangian coordinates, $X(t) = (y, U, V, H)(t)$, with initial data $X(0)=X_0\in \F$, is the unique solution to the following system of differential equations
\begin{subequations}
\label{eq:integrated_ODE}
\begin{align}
    y_t(t, \xi) &= U(t, \xi), \label{eq:integrated_ODE1} \\
    U_t(t, \xi) &= \frac{1}{2}V(t, \xi) - \frac{1}{4}V_{\infty}(t), \label{eq:integrated_ODE2} \\
    V(t, \xi) &= \int_{-\infty}^{\xi}V_{0, \xi}(\eta) \big(1 - \alpha \chi_{\{t \geq \tau(\eta) > 0\}}(\eta) \big) d\eta, \label{eq:integrated_ODE3} \\
    H_t(t, \xi) &= 0. \label{eq:integrated_ODE4}
\end{align}
Here $\tau:\R \rightarrow [0, \infty]$ is the wave breaking function given by
\end{subequations}
\begin{align}
    \tau(\xi) &= \begin{cases}- \frac{2y_{0, \xi}(\xi)}{U_{0, \xi}(\xi)}, & \text{if } U_{0, \xi}(\xi) < 0,\\
    0, & \text{if }y_{0, \xi}(\xi) = U_{0, \xi}(\xi) = 0, \\ 
    \infty, & \text{otherwise},
    \end{cases}
\label{eq:waveBreakingFunction_Lagrangian}
\end{align}
and $V_{\infty}(t) = \lim_{\xi \rightarrow \infty}V(t, \xi)$ denotes the total Lagrangian energy at time $t$. 

For a proof of the uniqueness of the solution to \eqref{eq:integrated_ODE}, we refer to \cite[Lem. 2.3]{alphaHS}. Furthermore, it should be pointed out that the solution operator respects equivalence classes in the following sense: If $X_{A,0}$ and $X_{B,0}$ belong to the same equivalence class, then also $X_A(t)$ and $X_B(t)$ belong to the same equivalence class for all $t\geq 0$, see \cite[Prop. 3.7]{alphaHS}. 

Based on \eqref{eq:integrated_ODE} we now define the solution operator $S_t$ in Lagrangian coordinates.

\begin{definition}
For any $t\geq 0$ and $X_0\in \F$ define $S_t(X_0)=X(t)$, where $X(t)$ denotes the unique $\alpha$-dissipative solution to \eqref{eq:integrated_ODE} with initial data $X(0)=X_0$.
  \end{definition}
  
 To finally obtain the $\alpha$-dissipative solution in Eulerian coordinates, we combine the solution operator $S_t$ with the mappings $L$ and $M$ as follows. 
 
 \begin{definition}\label{sol:Euler}
 For any $t\geq 0$ and $(u_0,\mu_0,\nu_0)\in \D$ the $\alpha$-dissipative solution at time $t$ is given by 
 \begin{equation*}
 (u, \mu, \nu)(t)= T_t((u_0,\mu_0,\nu_0))=M\circ S_t\circ L((u_0, \mu_0, \nu_0)).
 \end{equation*}
 \end{definition}
 
 As we mentioned earlier $L((u,\mu,\nu))$ is heavily influenced by the choice of $\nu$ in the triplet $(u,\mu,\nu)$. Nevertheless, it has been shown in \cite[Lem. 2.13]{NewstLipschitzMetrichAlpha}, that the choice of $\nu$ has no influence on the solution, in the following sense. Given any two triplets of initial data $(u_{A,0}, \mu_{A,0}, \nu_{A,0})$ and $(u_{B,0}, \mu_{B,0}, \nu_{B,0})$ in $\D$ such that 
 \begin{equation*}
 u_{A,0}=u_{B,0} \quad \text{ and }\quad \mu_{A,0}=\mu_{B,0},
 \end{equation*}
 then
 \begin{equation*}
 u_A(t)=u_B(t)\quad \text{ and }\quad \mu_A(t)=\mu_B(t) \quad \text{ for all } t\geq 0.
 \end{equation*}
 As a consequence, we restrict ourselves from now on to consider only initial data $(u_0,\mu_0,\nu_0)\in \D_0$, which means in particular that $\mu_0=\nu_0$.
 
 Furthermore, in the case of conservative and dissipative solutions the uniqueness of weak solutions has been established in \cite{UniquenessConservative} and \cite{UniquenessDissipative}, respectively, by showing that if a weak solution of the Hunter--Saxton equation satisfies certain properties, which are heavily dependent on the class of solutions one is interested in, then it can be computed using Definition~\ref{sol:Euler}. For the remaining values of $\alpha$, i.e., $\alpha\in (0,1)$, this is yet an open question.

\section{The numerical algorithm}\label{sec:numAlgorithm}

This section is devoted to presenting our numerical algorithm, which combines a projection operator $P_{\Delta x}$ with the solution operator $T_t$ as follows. 

\begin{definition}\label{def:numericalSolution}
    We define the numerical solution $(u_{\Delta x}, F_{\Delta x}, G_{\Delta x})$ for $t \in [0,T]$ by 
    \begin{align*}
        (u_{\Delta x}, F_{\Delta x}, G_{\Delta x})(t) &= T_t\circ P_{\Delta x}\left((u_0,F_0,G_0) \right)=M \circ S_{t} \circ L \circ P_{\Delta x} \left((u_0, F_0, G_0) \right),
    \end{align*}
    for any $(u_0, F_0, G_0) \in \D_0$. 
\end{definition}

As the projection operator $P_{\Delta x}$ is the only part that actually introduces any error, its construction is the crucial step. In particular, $P_{\Delta x}$ must preserve key properties of $\alpha$-dissipative solutions such as the total energy $\mu(\mathbb{R})$ and Definition \ref{def:EulerianSet}~\ref{def:EulerianSetlist:condition3}. Beside the construction of $P_{\Delta x}$, we will, at the end of this section, discuss the implementation of our algorithm.

\subsection{The projection operator}  
Any $\mu \in \M^+(\R)$ can be split into an absolutely continuous part $\mu_{\text{ac}}$, and a singular part $\mu_{\text{sing}}$ with respect to the Lebesgue measure, see \cite[Prop. 9.8]{McDonalRealAnalysis}, i.e.,
\begin{align*}
    d\mu &= d\mu_{\mathrm{ac}} +  d\mu_{\mathrm{sing}}.
\end{align*}
Thus $F(x)=\mu((-\infty,x))$ can be written as 
\begin{equation}\label{split:F}
F(x)=F_{\mathrm{ac}}(x)+ F_{\mathrm{sing}}(x),
\end{equation}
where  $F_{\mathrm{ac}}(x) = \mu_{\mathrm{ac}}((-\infty, x))$ and $F_{\mathrm{sing}}(x) = \mu_{\mathrm{sing}}((- \infty, x))$. Thus a projection operator acting on $F(x)$ can be a combination of two projections, one for $F_{\mathrm{ac}}(x)$ and another one for $F_{\mathrm{sing}}(x)$. As we will see, this is how we define  $P_{\Delta x}$.

To derive $P_{\Delta x}$, let $\{x_j \}_{j \in \mathbb{Z}}$ be a uniform discretization of $\R$, where $x_j = j \Delta x$ for $j \in \mathbb{Z}$ and $\Delta x > 0$ is fixed. Furthermore, set $P_{\Delta x}\left((u, F, G)\right) = (u_{\Delta x}, F_{\Delta x}, G_{\Delta x})$. To ensure that $(u_{\Delta x}, F_{\Delta x}, G_{\Delta x})$ satisfies Definition \ref{def:EulerianSet}~\ref{def:EulerianSetlist:condition3}, the projection operator is defined over two grid cells, i.e., over $[x_{2j}, x_{2j+1}] \cup [ x_{2j+1}, x_{2j+2}]$. Furthermore, to preserve the continuity of $(u, F_{\mathrm{ac}})$ and the total energy, we require that  $\left (u_{\Delta x}, F_{\Delta x, \text{ac}} \right)$ coincides with $(u, F_{\mathrm{ac}})$ in every other gridpoint, i.e., 
\begin{align}
    u_{\Delta x}(x_{2j}) &= u(x_{2j}), \nonumber \\
    F_{\Delta x, \mathrm{ac}}(x_{2j}) &= F_{\text{ac}}(x_{2j}).
    \label{eq:Projec_no_accum}
\end{align}

To interpolate $u_{\Delta x}$ and $F_{\Delta x}$ between the gridpoints $\{x_{2j} \}_{j \in \mathbb{Z}}$, we fit two lines, $p_1$ and $p_2$,
such that the resulting wave profile
\begin{align*}
    u_{\Delta x}(x) &= \begin{cases}
    p_1(x), & x_{2j} \leq x < x_{2j+1}, \\
    p_2(x), & x_{2j+1} \leq x \leq x_{2j+2}, \end{cases}
\end{align*}
is continuous, $(u_{\Delta x}, F_{\Delta x, \mathrm{ac}})$ satisfies \eqref{eq:Projec_no_accum}, and 
\begin{align*}
    F_{\Delta x, \mathrm{ac}}(x) &= \int_{- \infty}^x u_{\Delta x, x}^2(y)dy.
\end{align*} 
For an arbitrary $j \in \mathbb{Z}$ these constraints then read
\begin{subequations}
\begin{align}
    p_1(x_{2j}) &= u(x_{2j}), \label{eq:Projec_constraint1}\\
    p_2(x_{2j+2}) &= u(x_{2j+2}), \label{eq:Projec_constraint2}\\
    p_1(x_{2j+1}) &= p_2(x_{2j+1}), \label{eq:Projec_constraint3} \\
    \int_{x_{2j}}^{x_{2j+1}} \left(p_1'(y) \right)^2dy + \int_{x_{2j+1}}^{x_{2j+2}} \left(p_2'(y) \right)^2dy &= F_{\mathrm{ac}}(x_{2j+2}) - F_{\mathrm{ac}}(x_{2j}) . \label{eq:Projec_constraint4}
\end{align}
\end{subequations}

Introducing the operators 
\begin{align*}
    D_+ f_j &:= \frac{f_{j+1} - f_j}{\Delta x}, \\  
    Df_{2j} &:= \frac{D_+ f_{2j+1} + D_+ f_{2j}}{2} = \frac{f_{2j+2} - f_{2j}}{2\Delta x}, 
\end{align*}
for any sequence $\{f_j \}_{j \in \mathbb{Z}}$
and solving 
\eqref{eq:Projec_constraint1}--\eqref{eq:Projec_constraint4}, we end up with 
 \begin{align*}
     u_{\Delta x}(x) &= \begin{cases}
    u(x_{2j}) + \left(Du_{2j} \mp  q_{2j}\right)(x-x_{2j}), & x_{2j} \leq x \leq x_{2j+1}, \\
    u(x_{2j+2}) + \left(Du_{2j} \pm  q_{2j}\right)(x-x_{2j+2}), & x_{2j+ 1} \leq x \leq x_{2j+2}, 
    \end{cases} \\ F_{\Delta x, \mathrm{ac}}(x) &= \begin{cases} F_{\mathrm{ac}}(x_{2j}) + \left(Du_{2j} \mp q_{2j}\right)^2 \left(x - x_{2j} \right), & x_{2j} < x \leq x_{2j+1}, \\ 
    \frac{F_{\mathrm{ac}}(x_{2j+2}) + F_{\mathrm{ac}}(x_{2j})}{2} \mp 2Du_{2j} q_{2j}\Delta x \\ \quad + \left(Du_{2j} \pm q_{2j} \right)^2(x-x_{2j+1}), & x_{2j+1} < x \leq x_{2j+2},  \end{cases} 
 \end{align*}
 where 
 \begin{align}
     q_{2j} &:= \sqrt{DF_{\mathrm{ac}, 2j} - \left(Du_{2j} \right)^2}.\label{eq:short_hand_notation} 
 \end{align} 
 Note that \eqref{eq:short_hand_notation} is well-defined, as we have by the Cauchy--Schwarz inequality 
 \begin{align*}
     DF_{\mathrm{ac}, 2j} - \left(Du_{2j} \right)^2 & = \frac{1}{2\Delta x} \int_{x_{2j}}^{x_{2j+2}}u_x^2(z)dz - \left(\frac{1}{2\Delta x} \int_{x_{2j}}^{x_{2j+2}}u_x(z)dz \right)^2 \\ & \geq \frac{1}{2\Delta x} \int_{x_{2j}}^{x_{2j+2}}u_x^2(z)dz - \frac{1}{2\Delta x} \int_{x_{2j}}^{x_{2j+2}}u_x^2(z)dz = 0.
 \end{align*}
  
 For the singular part, we set
\begin{align*}
   F_{\Delta x, \mathrm{sing}}(x) &:=  F(x_{2j+2}) - F_{ \mathrm{ac}}(x_{2j+2})= F_{\mathrm{sing}}(x_{2j+2}), \quad \text{ for } x \in (x_{2j}, x_{2j+2}],
\end{align*}
where the last equality follows from \eqref{split:F}. Hence, all the discontinuities of $F_{\Delta x, \text{sing}}(x)$ are located within the countable set $\{x_{2j} \}_{j \in \mathbb{Z}}$. Finally, introduce 
\begin{equation*}
F_{\Delta x}(x) = F_{\Delta x, \mathrm{ac}}(x) + F_{\Delta x, \mathrm{sing}}(x),
\end{equation*}
which is left-continuous. Then we can associate to $F_{\Delta x}$ a positive and finite Radon measure $\mu_{\Delta x}$ through 
\begin{equation*}
 \mu_{\Delta x}((- \infty, x)) = F_{\Delta x}(x), 
 \end{equation*}
 and, by construction, 
 \begin{equation*}
  \mu_{\Delta x,\mathrm{ac}}((-\infty, x))=F_{\Delta x,\mathrm{ac}}(x), \quad \text{ and }\quad  \mu_{\Delta x,\mathrm{sing}}((- \infty, x))=F_{\Delta x,\mathrm{sing}}(x).
  \end{equation*} 

To summarize, the projection operator $P_{\Delta x}$ is defined as follows.

\begin{definition}[Projection operator]\label{def:ProjectionOperator}
    We define the projection operator $P_{\Delta x}:\D_0\to \D_0$ by $P_{\Delta x} \left((u,F,G) \right)=(u_{\Delta x},F_{\Delta x}, G_{\Delta x})$, where
    \begin{align}
    u_{\Delta x}(x) &= \begin{cases}
    u(x_{2j}) + \left(Du_{2j} \mp  q_{2j}\right)(x-x_{2j}), & x_{2j} < x \leq x_{2j+1}, \\
    u(x_{2j+2}) + \left(Du_{2j} \pm  q_{2j}\right)(x-x_{2j+2}), & x_{2j+ 1} < x \leq x_{2j+2}, 
    \end{cases}
    \label{eq:Projected_u} \end{align}
    and 
    \begin{align*}
       G_{\Delta x}(x) &= F_{\Delta x}(x) = F_{\Delta x, \mathrm{ac}}(x) + F_{\Delta x, \mathrm{sing}}(x),  
    \end{align*}
    for all $x \in \R$. The absolutely continuous part of $F_{\Delta x}$ is given by 
    \begin{align}
    F_{\Delta x, \mathrm{ac}}(x) &= \begin{cases} F_{\mathrm{ac}}(x_{2j}) + \left(Du_{2j} \mp q_{2j}\right)^2 \left(x - x_{2j} \right), & x_{2j} < x \leq x_{2j+1}, \\ 
    \frac{F_{\mathrm{ac}}(x_{2j+2}) + F_{\mathrm{ac}}(x_{2j})}{2} \mp 2q_{2j}Du_{2j} \Delta x \\ \quad + \left(Du_{2j} \pm q_{2j} \right)^2(x-x_{2j+1}), & x_{2j+1} < x \leq x_{2j+2},
    \end{cases}
    \label{eq:Projected_F_ac} \end{align} 
    and the singular part is defined by 
    \begin{align}
    F_{\Delta x, \mathrm{sing}}(x) &= F(x_{2j+2}) - F_{ \mathrm{ac}}(x_{2j+2})=F_{\mathrm{sing}}(x_{2j+2}), \qquad  x_{2j} < x \leq x_{2j+2}. \label{eq:singular_F} 
    \end{align}
    Finally, let $\mu_{\Delta x} ((- \infty, x))=F_{\Delta x}(x)$ and $\nu_{\Delta x}((- \infty, x)) = G_{\Delta x}(x)$ be the unique, finite and positive Radon measures associated with $F_{\Delta x}$ and $G_{\Delta x}$, respectively. 
\end{definition} 

\begin{remark}
The projection operator $P_{\Delta x}$ in Definition~\ref{def:ProjectionOperator} is only defined for $(u,\mu,\nu)\in \D_0$ due to Definition~\ref{def:numericalSolution}. In this case $\mu=\nu$ and by construction $\mu_{\Delta x}= \nu_{\Delta x}$. Nevertheless, $P_{\Delta x}$ can  be extended to a projection operator $\tilde P_{\Delta x}$, which is well-defined for any $(u,\mu,\nu) \in\D$ as follows.  
We keep $u_{\Delta x}$ and $\mu_{\Delta x}((-\infty,x))=F_{\Delta x}(x) $ given by \eqref{eq:Projected_u}--\eqref{eq:singular_F}, but $\nu_{\Delta x}((-\infty,x))=G_{\Delta x}(x)$ needs to be adjusted: 

Following the same idea as for $F_{\Delta x, \mathrm{sing}}$, we set
\begin{align*}
    G_{\Delta x, \mathrm{sing}}(x) &= G(x_{2j+2}) - G_{ \mathrm{ac}}(x_{2j+2})=G_{\mathrm{sing}}(x_{2j+2}) , \quad \text{ for } x \in (x_{2j}, x_{2j+2}],
\end{align*}
      Since we now have $d\mu_{\mathrm{ac}} = u_{x}^2dx$ and $\mu_{\mathrm{ac}} \leq \nu_{\mathrm{ac}}$, which implies that $d\nu_{\mathrm{ac}} = f dx $ for some $f \geq u_x^2$, we approximate the deviation $(f - u_x^2)$ with the integral average over two grid cells. Following these lines, we end up with 
\begin{align*}
    d\nu_{\Delta x, \mathrm{ac}} &= \left(u_{\Delta x, x}^2 + \frac{1}{2\Delta x}\int_{x_{2j}}^{x_{2j+2}} \left(f(y) - u_x^2(y) \right)dy \right)dx \\ &= \left(u_{\Delta x, x}^2 + \left(DG_{\mathrm{ac}, 2j} - DF_{\mathrm{ac}, 2j} \right) \right)dx,
\end{align*}
for $x\in (x_{2j}, x_{2j+1})\cup(x_{2j+1}, x_{2j+2})$. Computing $G_{\Delta x, \mathrm{ac}}(x)=\nu_{\Delta x, \mathrm{ac}}((-\infty,x))$, yields $G_{\Delta x, \mathrm{ac}}(x_{2j}) = G_{\mathrm{ac}}(x_{2j})$, and
\begin{align*}
     G_{\Delta x, \mathrm{ac}}(x) &= 
    G_{\mathrm{ac}}(x_{2j})+F_{\Delta x, \mathrm{ac}}(x) - F_{\mathrm{ac}}(x_{2j})  +  \left(DG_{\mathrm{ac}, 2j} - DF_{\mathrm{ac}, 2j} \right) (x-x_{2j}),
\end{align*}
for $x \in (x_{2j}, x_{2j+2}]$. Finally, we define $\nu_{\Delta x}$ implicitly by
\begin{equation*}
\nu_{\Delta x}((-\infty,x))= G_{\Delta x}(x)=G_{\Delta x, \mathrm{ac}}(x)+G_{\Delta x, \mathrm{sing}}(x).
\end{equation*}

Last but not least, note that there is one drawback with $\tilde P_{\Delta x}$. It preserves all properties in Definition~\ref{def:EulerianSet} when $d\mu_{\mathrm{ac}} = d\nu_{\mathrm{ac}}$. However, in the case $d\mu_{\mathrm{ac}} \neq d\nu_{\mathrm{ac}}$, the Radon--Nikodym derivative $\frac{d\mu_{\Delta x}}{d\nu_{\Delta x}}$ belongs to the interval $[1-\alpha, 1]$ rather than the set $\{1-\alpha, 1 \}$, and hence the relation $\frac{d\mu_{\mathrm{ac}}}{d\nu_{\mathrm{ac}}}(x) =1$ whenever $u_x(x) < 0$ in Definition \ref{def:EulerianSet}~\ref{def:EulerianSetlist:condition5} will not be obeyed in general. Therefore, $\tilde P_{\Delta x}$ maps into a larger space $\hat{\D} \supset \D$. Nevertheless, $\tilde P_{\Delta x}$ is relevant for numerical algorithms for nonconservative solutions ($\alpha\not =0$), which in a similar spirit to the one in \cite{NumericalConservative}, are based on applying the projection operator after each time step $\Delta t$. 
\end{remark}

A closer look at \eqref{eq:Projected_u} and \eqref{eq:Projected_F_ac} reveals that there are two possible sign choices on each interval $[x_{2j}, x_{2j+2}]$. Especially on a coarse grid, it is vital to make the right choice, as the following example shows.

\begin{example}\label{ex:Example1}
Consider the tuple $(u, F, G)$, where 
\begin{align*}
    u(x) &= \begin{cases}
    0, & x \leq 0, \\
    x, & 0 < x \leq 1, \\
    2-x, & 1 < x \leq 2, \\
    0, & 2 < x, \end{cases} \\
    F(x) &= \begin{cases}
    0, & x\leq 0, \\
    x + 1 -\alpha, & 0 < x \leq 2, \\
    3 - \alpha, & 2 < x, \end{cases} \\
    G(x) &= \begin{cases}
    0, & x\leq 0, \\
    x + 1 , & 0 < x \leq 2, \\
    3 , & 2 < x. \end{cases}
\end{align*}
This models a scenario where wave breaking takes place at $x=0$. 

We discretize the domain $[-\frac{1}{2}, \frac{5}{2}]$ with $\Delta x = \frac{1}{2}$, but for convenience we use $x_j = - \frac{1}{2} + j \Delta x$ as our gridpoints. Using \eqref{eq:Projected_u} and \eqref{eq:Projected_F_ac} we find 
\begin{equation}
\begin{aligned}
    u_{\Delta x}(x) &= \begin{cases}
    \frac{1}{2}\left(1 \mp 1\right)(x+\frac{1}{2}), & -\frac{1}{2} < x \leq 0, \\
    \frac{1}{2} + \frac{1}{2}\left(1 \pm 1\right)(x-\frac{1}{2}) , & 0 < x \leq \frac{1}{2}, \\
    \frac{1}{2} \mp (x- \frac{1}{2}), & \frac{1}{2} < x \leq 1, \\
    \frac{1}{2} \pm (x - \frac{3}{2}), & 1 < x \leq \frac{3}{2}, \\
    \frac{1}{2} + \frac{1}{2}\left(-1 \mp 1\right)(x - \frac{3}{2}), & \frac{3}{2} < x \leq 2, \\
    \frac{1}{2}\left(-1 \pm 1\right)(x-\frac{5}{2}), & 2 < x \leq \frac{5}{2},
    \end{cases} 
    \label{eq:example_approx_full_1}
\end{aligned}
\end{equation}

\begin{equation}
\begin{aligned}
    F_{\Delta x, \mathrm{ac}}(x) &= \begin{cases}
    \frac{1}{4}\left(1 \mp 1\right)^2(x + \frac{1}{2}), & - \frac{1}{2} < x \leq 0, \\
    \frac{1}{4} \mp\frac{1}{4} + \frac{1}{4}\left(1 \pm 1 \right)^2x, & 0 < x \leq \frac{1}{2}, \\ 
    x, & \frac{1}{2}< x \leq 1, \\
   x, & 1 < x \leq \frac{3}{2}, \\
   \frac{3}{2} + \frac{1}{4}\left(- 1 \mp 1 \right)^2 (x - \frac{3}{2}), & \frac{3}{2} < x \leq 2, \\
   \frac{7}{4} \pm \frac{1}{4} + \frac{1}{4}\left(-1 \pm 1 \right)^2(x-2), & 2 < x \leq \frac{5}{2}. \end{cases}
\label{eq:example_approx_full_2}
\end{aligned}
\end{equation}

\begin{figure}
    \centering
\includegraphics[width=0.8\textwidth, trim={0cm 0.6cm 0cm 1cm},clip]{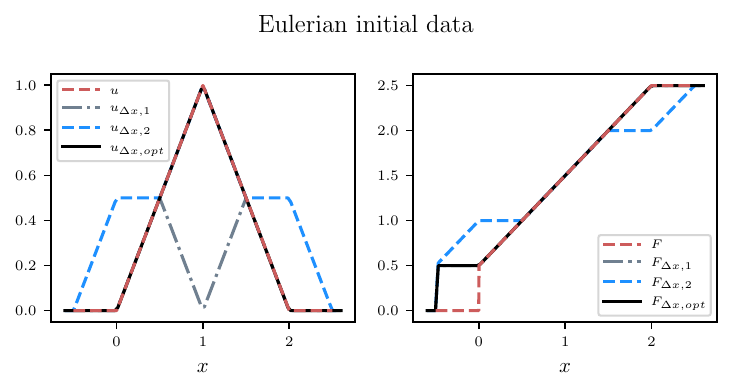}
    \caption{A comparison of the tuple $(u, F, G)$ from Example \ref{ex:Example1} with the projected data $(u_{\Delta x}, F_{\Delta x}, G_{\Delta x})$ for different sign-choices for $\alpha=\frac12$ on a grid with $\Delta x = \frac{1}{2}$ ($x$ on the horizontal axis).}
    \label{fig:ex1_eulerian_initial}
\end{figure}
There are several possible ways to choose the signs in \eqref{eq:example_approx_full_1}--\eqref{eq:example_approx_full_2} and some of those are shown in Figure \ref{fig:ex1_eulerian_initial}: $u_{\Delta x, 1}$ is based on using minus sign over $[x_{2j}, x_{2j+1}]$ and plus over $[x_{2j+1}, x_{2j+2}]$, while $u_{\Delta x, 2}$ is based on using plus over $[x_{2j}, x_{2j+1}]$ and minus  over $[x_{2j+1}, x_{2j+2}]$. By carefully choosing the signs, $u_{\Delta x}$ fully overlaps with the exact solution. This choice which we denote as $u_{\Delta x, \text{opt}}$, is given by  
\begin{align*}
    u_{\Delta x, \mathrm{opt}}(x) &= \begin{cases} 
    0, & -\frac{1}{2} < x \leq 0, \\
    x, & 0 < x \leq 1, \\
    2 - x, & 1 < x \leq 2, \\
    0, & 2 < x \leq \frac{5}{2}, \end{cases} \nonumber \\
    F_{\Delta x, \mathrm{ac}, \mathrm{opt}}(x) &= \begin{cases}
    0, &  - \frac{1}{2} < x \leq 0, \\
    x, & 0 < x \leq 2, \\
    2, & 2<  x \leq \frac{5}{2}, \end{cases}
\end{align*}
and equals $(u, F_{\mathrm{ac}})$. The singular parts are approximated by 
\begin{align*}
    F_{\Delta x, \mathrm{sing}}(x) &= \begin{cases}
    0, & x \leq -\frac{1}{2}, \\
    1 - \alpha, & -\frac{1}{2} < x, 
    \end{cases} \nonumber \\
    G_{\Delta x, \mathrm{sing}}(x) &= \begin{cases}
    0, & x \leq -\frac{1}{2}, \\
    1, &-\frac{1}{2} < x. 
    \end{cases}
\end{align*}
\end{example}

As the above example shows, choosing the best sign on each of the two grid cells in $[x_{2j}, x_{2j+2}]$  is vital, since it significantly affects the accuracy in the projection step and hence the whole algorithm, especially for a coarse grid. Instead of trial and error, which is never a good choice except for illustrative purposes, usually a selection criterion is imposed. In the implementation we decided to choose the sign over 
     $[x_{2j}, x_{2j+1}]$ that minimizes the distance between $u_{\Delta x}(x_{2j+1})$ and  $u(x_{2j+1})$, which can be formalized as follows. Introduce $m \in \{0, 1\}$ such that for $x \in [x_{2j}, x_{2j+1}]$ we may write \eqref{eq:Projected_u} as 
\begin{align*}
    u_{\Delta x}(x) &= u(x_{2j}) + \left(Du_{2j} + \left(-1 \right)^{m} q_{2j}\right)(x-x_{2j}).
\end{align*}
Subtracting $u_{\Delta x}(x_{2j+1})$ from $u(x_{2j+1})$, and finding the $m$ that minimizes the distance can then be expressed as  
\begin{align*}
    k_{2j} :&= \argmin_{m \in \{0, 1\}} \left \{ \left|\left(D_{+}u_{2j} - Du_{2j} \right)\Delta x  + \left(-1\right)^{m+1}q_{2j}\Delta x \right| \right \}. 
\end{align*}
Note that the sign over $[x_{2j+1}, x_{2j+2}]$ will be $(-1)^{k_{2j}+1}$.

\subsection{Numerical implementation of \texorpdfstring{$T_t$}{the solution operator from Definition~\ref{sol:Euler}}}\label{subsec:Implementation}
Since $T_t$ associates to each piecewise linear initial data, the corresponding solution at time $t$, which again is piecewise linear, our numerical implementation of $T_t$ will yield the exact solution with the projected initial data. 

Fix a discretization parameter $\Delta x > 0$ and an initial datum $(u, F, G)(0) \in \D_0$. 
Following Definition~\ref{def:numericalSolution}, the numerical Lagrangian initial data is given by 
\begin{align}
    X_{\Delta x}(0) &= (y_{\Delta x}, U_{\Delta x}, V_{\Delta x}, H_{\Delta x})(0) = L\circ P_{\Delta x} \left( (u, F, G)(0) \right),
    \label{eq:numerical_Lagrangian_initial data}
\end{align}
and we therefore first focus on the numerical implementation of $L$. We will observe that each component of $X_{\Delta x}(0)$ is again a piecewise linear function. Moreover, the associated Lagrangian grid is non-uniform and has possibly a larger number of gridpoints than the original Eulerian grid. 

\subsubsection{Implementation of $L$}
To avoid any ambiguity between breakpoints of a function and points of wave breaking, we denote the former as nodes in the following. 

By construction, $x+G_{\Delta x}(0, x)$ is an increasing, piecewise linear function with nodes situated at the points $\{x_j\}_{j\in \mathbb{Z}}$, and hence $y_{\Delta x}(0,\xi)$ is again an increasing and piecewise linear function due to Definition~\ref{def:MappingL}. Furthermore, $y_{\Delta x}(0,\xi)$ is continuous and its nodes can be identified by finding all $\xi$ which satisfy $y_{\Delta x}(0,\xi)=x_j$ for some $j\in \mathbb{Z}$ as follows. By Definition~\ref{def:MappingL}  
\begin{align*}
y_{\Delta x}(0,\xi)+ G_{\Delta x}(0, y_{\Delta x}(0,\xi)) &\leq y_{\Delta x}(0,\xi)+ H_{\Delta x}(0,\xi) \\ &\leq y_{\Delta x}(0,\xi)+ G_{\Delta x}(0, y_{\Delta x}(0,\xi)+),
\end{align*}
and, due to $X_{\Delta x}(0)\in \F_0$, we have
\begin{equation}\label{def:xxi}
y_{\Delta x}(0,\xi)+ G_{\Delta x}(0, y_{\Delta x}(0,\xi)) \leq\xi \leq y_{\Delta x}(0,\xi)+ G_{\Delta x}(0, y_{\Delta x}(0,\xi)+).
\end{equation}
Since $G_{\Delta x}(0, x)$ is continuous except possibly at the points $\{x_{2j} \}_{j \in \mathbb{Z}}$, 
to every $x_{2j+1}$ with $j\in \mathbb{Z}$, there exists a unique $\xi_{2j+1}$ such that $y_{\Delta x}(0, \xi_{2j+1})=x_{2j+1}$ and, using \eqref{def:xxi}, $\xi_{2j+1}$ is given by
\begin{equation*}
x_{2j+1}+G_{\Delta x}(0, x_{2j+1})=\xi_{2j+1}.
\end{equation*}
At the points $\{x_{2j}\}_{j\in \mathbb{Z}}$, the function $G_{\Delta x}(0, x)$ might have a jump. Therefore, there exists a maximal interval $I_{2j}=[\xi_{2j}^{l}, \xi_{2j}^{r}]$ such that $y_{\Delta x}(0,\xi)=x_{2j}$ for all $\xi \in I_{2j}$, and using once more \eqref{def:xxi}, $\xi_{2j}^{l}$ and $\xi_{2j}^{r}$ are given by 
\begin{equation}
x_{2j}+G_{\Delta x}(0, x_{2j})= \xi_{2j}^{l}\leq \xi_{2j}^{r}= x_{2j}+G_{\Delta x}(0, x_{2j}+).
\label{eq:jump_point}
\end{equation}
Note that $\xi_{2j}^{l}=\xi_{2j}^{r}$ if and only if $G_{\Delta x}(0, x)$ has no jump at $x=x_{2j}$.

Set 
\begin{equation}
\hat\xi_{3j}= \xi_{2j}^{l}, \quad \hat\xi_{3j+1}=\xi_{2j}^{r}, \quad \text{ and } \quad \hat \xi_{3j+2}=\xi_{2j+1}\quad \text{ for }j\in \mathbb{Z}.
\label{eq:LagrangianGridpoints}
\end{equation} 
Then the nodes of $y_{\Delta x} (0,\xi)$ are situated at the points $\{\hat\xi_{j}\}_{j\in \mathbb{Z}}$, as shown in Figure~\ref{fig:jumps_in_G_constant_in_y}. As mentioned earlier $y_{\Delta x}(0)$ is piecewise linear and continuous, and hence also $U_{\Delta x}(0)$, $V_{\Delta x}(0)$, and $H_{\Delta x}(0)$ are piecewise linear and continuous. Moreover, the nodes of $X_{\Delta x}(0)$ are located at $\{\hat\xi_{j}\}_{j\in \mathbb{Z}}$, since the nodes of $(u_{\Delta x}, F_{\Delta x}, G_{\Delta x})(0)$ are situated at $x=x_{j}$, $j\in \mathbb{Z}$. Thus, once the nodes are identified and the values of $X_{\Delta x}(0)$ at these nodes are determined using Definition~\ref{def:MappingL}, the value of $X_{\Delta x}(0,\xi)$ can be computed at any point $\xi$. 

Furthermore, each two grid cells $[x_{2j}, x_{2j+1}],[x_{2j+1}, x_{2j+2}]$ are mapped to 3 grid cells $[\hat \xi_{3j}, \hat\xi_{3j+1}], [\hat\xi_{3j+1}, \hat\xi_{3j+2}], [\hat\xi_{3j+2}, \hat\xi_{3j+3}]$  by $L$.
In addition, while the Eulerian discretization is uniform with size  
$\Delta x$, we now have 
\begin{align*}
\vert \hat \xi_{3j+1}-\hat\xi_{3j}\vert& =\vert  \xi_{2j}^{r}-\xi_{2j}^{l}\vert = G_{\Delta x}(0, x_{2j}+)-G_{\Delta x}(0, x_{2j}), \\
\vert \hat \xi_{3j+2}-\hat\xi_{3j+1}\vert&=\vert \xi_{2j+1}-\xi_{2j}^{r}\vert=\Delta x+ G_{\Delta x}(0, x_{2j+1})-G_{\Delta x}(0, x_{2j}+),\\
\vert \hat\xi_{3j+3}-\hat\xi_{3j+2}\vert&= \vert \xi_{2j+2}-\xi_{2j+1}\vert = \Delta x+ G_{\Delta x}(0, x_{2j+2})-G_{\Delta x}(0, x_{2j+1}),
\end{align*}
and hence the Lagrangian discretization is non-uniform.

\begin{figure}
    \centering
    \includegraphics[width=0.8\textwidth]{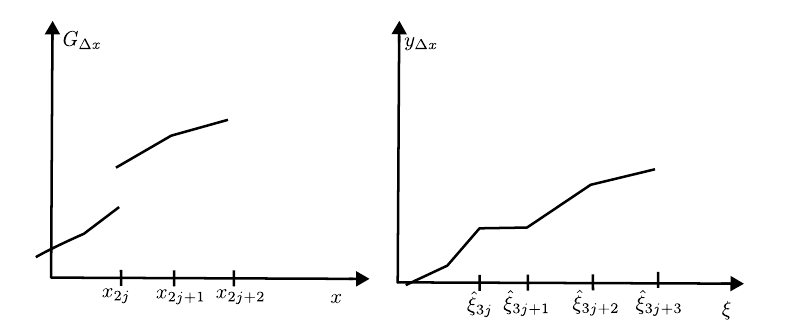}
     \captionsetup{width=0.88\textwidth}
    \caption{An illustration of how jumps in $G_{\Delta x}$ are mapped into intervals where  $y_{\Delta x}$ is constant.}
    \label{fig:jumps_in_G_constant_in_y}
\end{figure}

\subsubsection{The wave breaking function}
 The next step is to compute the numerical wave breaking function $\tau_{\Delta x}: \R \rightarrow [0, \infty]$, using \eqref{eq:waveBreakingFunction_Lagrangian}. For $\xi\in [\hat\xi_{3j}, \hat\xi_{3j+1}]$, we have $y_{\Delta x, \xi}(0,\xi) = U_{\Delta x, \xi} (0,\xi)= 0$ and hence $\tau_{\Delta x}(\xi)=0$ by \eqref{eq:waveBreakingFunction_Lagrangian}. For $\xi\in (\hat\xi_{3j+1}, \hat\xi_{3j+2})$, note that $U_{\Delta x,\xi}(0,\xi)$ has the same sign as $u_{\Delta x,x}(0, x)= Du_{2j}\mp q_{2j}$ in $(x_{2j}, x_{2j+1})$. Likewise, for $\xi \in (\hat\xi_{3j+2}, \hat\xi_{3j+3})$, $U_{\Delta x,\xi}(0,\xi)$ has the same sign as $u_{\Delta x,x}(0, x)= Du_{2j}\pm q_{2j}$ in $(x_{2j+1}, x_{2j+2})$. Furthermore, $U_{\Delta x,\xi}(0,\xi)=u_{\Delta x,x}(0, y_{\Delta x}(0,\xi))y_{\Delta x,\xi}(0,\xi)$ for all $\xi \in (\hat\xi_{3j+1}, \hat\xi_{3j+3})\backslash\{ \hat\xi_{3j+2}\}$. Therefore, introducing $\tau_{3j+ \frac{3}{2}}$ and $\tau_{3j+ \frac{5}{2}}$ as
\begin{align*}
    \tau_{3j+ \frac{3}{2}} &= \begin{cases} -\frac{2}{Du_{2j} \mp q_{2j}}, & \text{if } Du_{2j} \mp q_{2j} < 0, \\ 
    \infty, & \text{otherwise}, \end{cases} \\
    \tau_{3j+\frac{5}{2}} &=\begin{cases} -\frac{2}{Du_{2j} \pm q_{2j}}, & \text{if } Du_{2j} \pm q_{2j} < 0, \\ 
    \infty, & \text{otherwise}, \end{cases}
\end{align*} 
the wave breaking function $\tau_{\Delta x}$ for $\xi \in [\hat\xi_{3j}, \hat\xi_{3j+3})$ is given by 
\begin{align*}
    \tau_{\Delta x}(\xi) &= \begin{cases} 
    0, & \xi \in [\hat\xi_{3j}, \hat\xi_{3j+1}], \\
    \tau_{3j+\frac{3}{2}}, & \xi \in (\hat\xi_{3j+1}, \hat\xi_{3j+2}], \\
    \tau_{3j + \frac{5}{2}}, & \xi \in (\hat\xi_{3j+2}, \hat\xi_{3j+3}). \end{cases}
\end{align*} 
Figure \ref{fig:waveBreaking} illustrates the relation between the slopes of $U_{\Dx}(0)$ and the value attained by $\tau_{\Dx}$. Note that an interval where $U_{\Dx}(0)$ is strictly increasing leads to an interval where $\tau_{\Dx}$ is unbounded.
\begin{figure}
	 \includegraphics[width=0.8\textwidth]{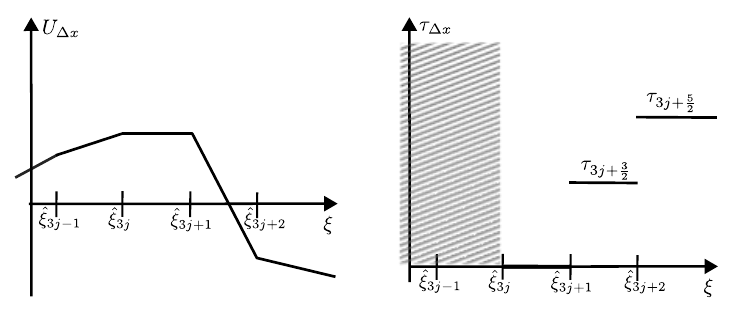}
	  \captionsetup{width=0.88\textwidth}
	 \caption{The projected Lagrangian wave profile $U_{\Dx}$ consists of increasing, decreasing and constant segments. The right plot visualizes the corresponding wave breaking function $\tau_{\Dx}$, with the shaded region representing an interval where $\tau_{\Dx}(\xi)=\infty$.}
	 \label{fig:waveBreaking}
\end{figure}

\subsubsection{Implementation of \texorpdfstring{$S_t$}{the Lagrangian solution operator}}
We proceed by considering $X_{\Delta x}(t)=S_t(X_{\Delta x}(0))$.  Introduce the function 
\begin{equation*}
\zeta_{\Delta x} = y_{\Delta x} - \id, 
\end{equation*}
which we prefer to work with  as $\zeta_{\Delta x}\in L^{\infty}(\R)$ in contrast to $y_{\Delta x}$, see Definition~\ref{def:DefinitionLagrangian}. Furthermore, let 
\begin{equation*}
\hat{X}_{\Delta x}=(\zeta_{\Delta x}, U_{\Delta x}, V_{\Delta x}, H_{\Delta x}).
\end{equation*}
It then turns out to be advantageous to compute the time evolution of $\hat{X}_{\Delta x,\xi}$ rather than the one of $\hat{X}_{\Delta x}$, 
due to possible drops in $V_{\Delta x,\xi}$, cf. \eqref{eq:integrated_ODE3}. However, this forces us to slightly change our point of view, since by differentiating \eqref{eq:numerical_Lagrangian_initial data} we find that $\hat{X}_{\Delta x, \xi}(0)$ is a piecewise constant function, whose discontinuities are situated at the nodes $\hat\xi_j$, where $j\in \mathbb{Z}$. Therefore, we associate to each $j \in \mathbb{Z}$, 
\begin{equation*}
\hat{X}_{j, \xi}(0)= \begin{cases}\hat{X}_{\Delta x, \xi} \left(0, \tfrac12 (\hat \xi_{j}+\hat\xi_{j+1}) \right), & \text{ if }\hat\xi_{j}\not = \hat\xi_{j+1}, \\
(0,0,0,0), & \text{ otherwise.}
\end{cases}
\end{equation*}
This sequence is evolved numerically according to, 
\begin{align}
    \zeta_{j, t\xi}(t) &= U_{j, \xi}(t), \nonumber \\ 
    U_{j, t\xi}(t) &= \frac{1}{2}V_{j, \xi}(t), \nonumber \\ 
    V_{j, \xi}(t) &= \left(1 - \alpha \chi_{\{t \geq \tau_{\Delta x}(\xi) > 0\}} \left(\tfrac12(\hat \xi_j+\hat\xi_{j+1}) \right) \right)V_{j, \xi}(0), \nonumber \\ 
    H_{j, t \xi}(t) &= 0. 
\label{eq:LagrangianSystem}
\end{align}
The above system is obtained by differentiating \eqref{eq:integrated_ODE} with respect to $\xi$. Since computing $\hat{X}_{\Delta x, \xi}(t,\cdot)$  exactly, yields a piecewise constant function whose discontinuities are again located at the nodes $\{\hat \xi_j\}_{j \in \mathbb{Z}}$, considering the above sequence $\{\hat {X}_{j, \xi}(t)\}_{j \in \mathbb{Z}}$ does not yield any additional error. Furthermore, the exact $\hat{X}_{\Delta x, \xi}(t,\cdot)$ can be read off from $\{\hat {X}_{j, \xi}(t)\}_{j \in \mathbb{Z}}$.

Finally, we can exactly recover $\hat{X}_{\Delta x}(t)$, which is continuous with respect to $\xi$, from $\{\hat{X}_{j,\xi}(t)\}_{j \in \mathbb{Z}}$. Since the asymptotic behavior of $\hat{X}_{\Delta x}(t,\xi)$ as $\xi\to \pm\infty$ changes in accordance with \eqref{eq:integrated_ODE}, this must also be taken into account by our algorithm. The fact that the initial data is in the space $\mathcal{D}$ combined with \eqref{eq:integrated_ODE3} implies
\begin{equation*}
 V_{\Delta x, -\infty} (t)=0 \quad \text{ for all } t\geq 0,
\end{equation*}
where the abbreviation
\begin{equation*}
f_{\pm \infty}(t)=\lim_{\xi\to\pm\infty} f(t,\xi),
\end{equation*}
has been introduced to ease the notation. Hence, for all $j\in \mathbb{Z}$,
\begin{equation*}
V_{\Delta x}(t, \hat \xi_j) = \sum_{k=-\infty}^{j-1} V_{k, \xi}(t) \left(\hat \xi_{k+1}-\hat\xi_{k} \right)= V_{\Delta x}(t, \hat \xi_{j-1})+ V_{j-1, \xi}(t) \left(\hat \xi_{j}-\hat \xi_{j-1} \right).
\end{equation*}
From \eqref{eq:integrated_ODE4} it follows that 
\begin{equation*}
H_{\Delta x}(t, \hat \xi_j)= H_{\Delta x}(0, \hat \xi_j), \quad \text{ for all } t\geq 0.
\end{equation*}
For $U_{\Delta x}(t,\xi)$, we have, combining \eqref{eq:integrated_ODE2} and Fubini's theorem, 
 \begin{align*}
    U_{\Delta x,-\infty}(t) &= U_{\Delta x,-\infty}(0) - \frac{1}{4} \int_0^t V_{\Delta x, \infty}(s)ds \nonumber \\ 
    &= U_{\Delta x, -\infty}(0) -\frac{1}{4}H_{\Delta x, \infty}(0)t \nonumber \\ \nonumber
     & \qquad + \frac{1}{4}\alpha \int_{\R} H_{\Delta x, \xi}(0, \xi)\int_{\tau_{\Delta x}(\xi)}^t \chi_{\{s \geq \tau_{\Delta x}(\xi)>0 \}}(\xi)ds d\xi\\ \nonumber
    & =  U_{\Delta x, -\infty}(0) -\frac{1}{4}H_{\Delta x, \infty}(0)t \nonumber \\ \nonumber
    & \qquad + \frac{1}{4}\alpha\sum_{k=-\infty}^\infty H_{k, \xi}(0) \left(\hat\xi_{k+1}-\hat\xi_k \right) \left(t-\tau_{\Delta x} \left(\tfrac12 (\hat \xi_k+\hat \xi_{k+1}) \right) \right) \\
    & \qquad \qquad \qquad \times \chi_{ \left\{s \geq \tau_{\Delta x}\left(\tfrac12(\hat\xi_k+\hat\xi_{k+1}) \right) \right\}}(t)\chi_{ \left\{j: \tau_{\Delta x}\left(\tfrac12(\hat\xi_j+\hat\xi_{j+1}) \right)>0 \right\}}(k),
\end{align*}
and for all $j\in \mathbb{Z}$
\begin{align*}
 U_{\Delta x}(t, \hat\xi_j) &= U_{\Delta x,-\infty}(t) + \sum_{k=-\infty}^{j-1} U_{k, \xi}(t)\left(\hat \xi_{k+1}-\hat\xi_{k} \right) \nonumber \\ 
 & =U_{\Delta x}(t, \hat \xi_{j-1}) + U_{j-1, \xi}(t) \left(\hat\xi_{j} - \hat{\xi}_{j-1} \right).
\end{align*}
For $\zeta_{\Delta x}(t,\xi)$, we  find, using \eqref{eq:integrated_ODE1} and $\zeta_{\Delta x,-\infty}(0) = 0$, 
\begin{align*}
    \zeta_{\Delta x,-\infty}(t) &= \int_0^t U_{\Delta x,-\infty}(s)ds \\ 
    &= U_{\Delta x,-\infty}(0)t - \frac{1}{8}\alpha H_{\Delta x, \infty}(0)t^2 \\ & \qquad + \frac{1}{8}\alpha \sum_{k=-\infty}^\infty H_{k, \xi}(0) \left(\hat \xi_{k+1}-\hat \xi_{k} \right)\left(t-\tau_{\Dx} \left(\tfrac12 (\hat \xi_k+\hat \xi_{k+1}) \right) \right)^2\\
    & \qquad \qquad \qquad \times \chi_{ \left\{s \geq \tau_{\Delta x}\left(\tfrac12(\hat\xi_k+\hat\xi_{k+1}) \right) \right\}}(t)\chi_{ \left\{j: \tau_{\Delta x}\left(\tfrac12(\hat\xi_j+\hat\xi_{j+1})\right)>0 \right\}}(k),
\end{align*}
and for all $j\in \mathbb{Z}$, 
\begin{align*}
 \zeta_{\Delta x}(t, \hat{\xi}_j) &= \zeta_{\Delta x,-\infty}(t) + \sum_{k=-\infty}^{j-1} \zeta_{k, \xi}(t) \left(\hat \xi_{k+1}-\hat\xi_{k} \right) \nonumber \\ 
 & = \zeta_{\Delta x}(t, \hat \xi_{j-1})+ \zeta_{j-1, \xi}(t) \left(\hat\xi_j - \hat{\xi}_{j-1} \right).
\end{align*}
Recalling that $\hat X_{\Delta x}(t)$ is piecewise linear and continuous, with nodes situated at $\hat \xi_j$ with $j\in \mathbb{Z}$, we can now recover $\hat X_{\Delta x}(t,\xi)$ for all $\xi\in \mathbb{R}$, and, using that $y_{\Delta x}(t)= \zeta_{\Delta x}(t)+ \id$, we end up with $X_{\Delta x}(t,\xi)$ for all $\xi \in \mathbb{R}$.

In practice, we have to limit the numerical approximation of the Cauchy problem \eqref{eq:Hunter-Saxton} to initial data where $u_{x}(0)$ and $\mu_{\mathrm{sing}}(0)$ have compact support. For such initial data there exists $N \in \mathbb{N}$ such that 
\begin{equation*}
\zeta_{j,\xi}(0)=U_{j,\xi}(0)=V_{j, \xi}(0) = H_{j, \xi}(0) = 0 \quad \text{for all }\vert j\vert \geq N,
\end{equation*} and by \eqref{eq:LagrangianSystem} we have
\begin{equation*}
\zeta_{j,\xi}(t)=U_{j,\xi}(t)=V_{j, \xi}(t) = H_{j, \xi}(t) = 0 \quad \text{for all }\vert j\vert \geq N \text{ and } t\geq 0.
\end{equation*}

\subsubsection{Implementation of $M$} Finally, to recover the solution in Eulerian coordinates we apply the mapping $M$ to $X_{\Delta x}(t)$, i.e., 
\begin{align*}
    \left (u_{\Delta x}, \mu_{\Delta x}, \nu_{\Delta x}\right)(t) &= M\left(X_{\Delta x}(t) \right).
\end{align*}
Here it is important to note that $X_{\Delta x}(t)$ is piecewise linear and continuous, thus  $u_{\Delta x}(t,\cdot)$ is also piecewise linear and continuous, while $F_{\Delta x}(t, \cdot)$ and $G_{\Delta x}(t, \cdot)$ are piecewise linear, increasing and in general only left-continuous. Furthermore, their nodes are situated at the points $\{y_{\Delta x}(t, \hat \xi_j)\}_{j \in \mathbb{Z}}$. Numerically, we therefore apply a piecewise linear reconstruction. 

Given $x \in \R$, there exists $j \in \mathbb{Z}$ such that $x \in (y_{\Delta x}(t, \hat{\xi}_j), y_{\Delta x}(t, \hat{\xi}_{j+1})]$.

If $x=y_{\Delta x}(t, \hat{\xi}_{j+1})$, we have, by the definition of $M$,
\begin{equation*}
u_{\Delta x}(t,x)=U_{\Delta x}(t, \hat \xi_{j+1}), \hspace{0.16cm}  F_{\Delta x}(t,x)=V_{\Delta x}(t, \hat \xi_{j+1}), \hspace{0.16cm} \text{and}\hspace{0.16cm} G_{\Delta x}(t,x)= H_{\Delta x} (t, \hat \xi_{j+1}).
\end{equation*}
This covers also the case where $\hat{\xi}_j = \hat{\xi}_{j+1}$, and the case where wave breaking occurs, i.e., $y_{\Delta x}(t, \hat{\xi}_j) = y_{\Delta x}(t, \hat{\xi}_{j+1})$.

If $x\in  (y_{\Delta x}(t, \hat{\xi}_j), y_{\Delta x}(t, \hat{\xi}_{j+1}))$, observe that  
\begin{equation*}
F_{\Delta x}(t, y_{\Delta x}(t, \hat{\xi}_j)+)= V_{\Delta x}(t, \hat \xi_j), \quad \text{ and }\quad V_{\Delta x}(t, \hat \xi_{j+1})=F_{\Delta x}(t, y_{\Delta x}(t, \hat{\xi}_{j+1})),
\end{equation*}
and
\begin{equation*}
G_{\Delta x}(t, y_{\Delta x}(t, \hat{\xi}_j)+)= H_{\Delta x}(t, \hat \xi_j), \quad \text{ and }\quad H_{\Delta x}(t, \hat \xi_{j+1})=G_{\Delta x}(t, y_{\Delta x}(t, \hat{\xi}_{j+1})). 
\end{equation*}
Therefore, for $x\in (y_{\Delta x}(t, \hat{\xi}_j), y_{\Delta x}(t, \hat{\xi}_{j+1}))$ the linear interpolation, which coincides with  $M\left(X_{\Delta x}(t) \right)$, is given by
\begin{align}
    u_{\Delta x}(t,x) &= \frac{y_{\Delta x}(t, \hat{\xi}_{j+1}) - x}{D_{+}^{\xi}y_{\Delta x}(t, \hat{\xi}_{j})}U_{\Delta x}(t, \hat{\xi}_{j}) + \frac{x - y_{\Delta x}(t, \hat{\xi}_{j})}{D_{+}^{\xi}y_{\Delta x}(t, \hat{\xi}_{j})}U_{\Delta x}(t, \hat{\xi}_{j+1}), \nonumber \\
    F_{\Delta x}(t, x) &= \frac{y_{\Delta x}(t, \hat{\xi}_{j+1}) - x}{D_{+}^{\xi}y_{\Delta x}(t, \hat{\xi}_{j})}V_{\Delta x}(t, \hat{\xi}_{j}) + \frac{x - y_{\Delta x}(t, \hat{\xi}_{j})}{D_{+}^{\xi}y_{\Delta x}(t, \hat{\xi}_{j})}V_{\Delta x}(t, \hat{\xi}_{j+1}), \nonumber \\
    G_{\Delta x}(t, x) &= \frac{y_{\Delta x}(t, \hat{\xi}_{j+1}) - x}{D_{+}^{\xi}y_{\Delta x}(t, \hat{\xi}_{j})}H_{\Delta x}(t, \hat{\xi}_{j}) + \frac{x - y_{\Delta x}(t, \hat{\xi}_{j})}{D_{+}^{\xi}y_{\Delta x}(t, \hat{\xi}_{j})}H_{\Delta x}(t, \hat{\xi}_{j+1}),
\label{eq:piecewise_linear_recon}
\end{align}
where $D_+^{\xi}y_{\Delta x}(t, \hat{\xi}_j) = y_{\Delta x}(t, \hat{\xi}_{j +1 }) - y_{\Delta x}(t, \hat{\xi}_{j})$. Finally, note that the Eulerian grid changes significantly with respect to time. First of all the number of grid cells is changing over time and secondly, the grid does not remain uniform. 

\section{Convergence of the numerical method}\label{sec:convAnalysis}

In this section we prove that our family of approximations $\{ \left(u_{\Delta x}, F_{\Delta x}, G_{\Delta x} \right) \}_{\Delta x > 0}$ converges to the $\alpha$-dissipative solution of \eqref{eq:Hunter-Saxton}. We start by proving convergence of the projected initial data in Eulerian coordinates. Thereafter, we show that this induces convergence, initially and at later times, towards the unique $\alpha$-dissipative solution in Lagrangian coordinates. Finally, we investigate in what sense convergence in Lagrangian coordinates carries over to Eulerian coordinates. 

\subsection{Convergence of the initial data in Eulerian coordinates}
Let $P_{\Delta x}$ be the projection operator given by Definition \ref{def:ProjectionOperator}. We have the following result.

\begin{prop}\label{prop:prop_projection}
    For $(u,\mu, \nu) \in \D_0$, let $(u_{\Delta x}, F_{\Delta x}, G_{\Delta x})=P_{\Delta x} \left((u,F, G) \right)$. Then 
 \begin{subequations}
    \begin{align}
        \|u - u_{\Delta x}\|_{\infty} & \leq \left(1 + \sqrt{2} \right)\sqrt{F_{\mathrm{ac}, \infty}} \Delta x^{\frac{1}{2}}, \label{eq:projection_u_infty}\\
        \|u - u_{\Delta x}\|_{2} & \leq \sqrt{2}\left(1+\sqrt{2}\right)\sqrt{F_{\mathrm{ac}, \infty}} \Delta x, \label{eq:projection_u_L2}  \\ 
        \| F - F_{\Delta x}\|_p &\leq 2 F_{\infty}\Delta x^{\frac{1}{p}},  \quad \text { for } p=1,2,\label{eq:projection_F_Lp}\\
        \|G - G_{\Delta x} \|_p & \leq 2 G_{\infty} \Delta x^{\frac{1}{p}}, \quad \text{ for } p=1,2, \label{eq:projection_G_Lp} \\
         F_{\Delta x}(x) &\rightarrow F(x) \text{ for every } x \text{ at which } F \text{ is continuous}, \label{eq:vague_conv_initially_F} \\
         G_{\Delta x}(x) &\rightarrow G(x) \text{ for every } x \text{ at which } G \text{ is continuous}. \label{eq:vague_conv_initially_G}
    \end{align}
    \end{subequations}
\end{prop}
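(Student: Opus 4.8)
The plan is to establish each estimate in turn, starting from the explicit formulas for the projected quantities in Definition~\ref{def:ProjectionOperator} and exploiting the pointwise normalization \eqref{eq:Projec_no_accum}, which pins $u_{\Delta x}$ and $F_{\Delta x,\mathrm{ac}}$ to the exact values at the even gridpoints $x_{2j}$.

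\textbf{The sup-norm bound for $u$.} On a double cell $[x_{2j},x_{2j+2}]$, $u_{\Delta x}$ is piecewise linear with $u_{\Delta x}(x_{2j})=u(x_{2j})$, $u_{\Delta x}(x_{2j+2})=u(x_{2j+2})$, and $u_{\Delta x}(x_{2j+1})=u(x_{2j})+(Du_{2j}\mp q_{2j})\Delta x$. I would first control $|u(x)-u(x_{2j})|$ for $x$ in the double cell by $\int_{x_{2j}}^{x}|u_x|\,dz \leq \sqrt{2\Dx}\,(F_{\mathrm{ac}}(x_{2j+2})-F_{\mathrm{ac}}(x_{2j}))^{1/2}$ via Cauchy--Schwarz, and similarly for the piecewise-linear $u_{\Delta x}$, noting that $\|u_{\Delta x,x}\|_{L^2(x_{2j},x_{2j+2})}^2 = F_{\mathrm{ac}}(x_{2j+2})-F_{\mathrm{ac}}(x_{2j})$ by construction \eqref{eq:Projec_constraint4}. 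Writing $u-u_{\Delta x} = (u-u(x_{2j})) - (u_{\Delta x}-u(x_{2j}))$ and using the telescoping/monotone nature of $\sum_j (F_{\mathrm{ac}}(x_{2j+2})-F_{\mathrm{ac}}(x_{2j})) \leq F_{\mathrm{ac},\infty}$, the two contributions give the $1$ and the $\sqrt2$ in $(1+\sqrt2)$; one has to be a little careful to bound the supremum over a double cell by the energy in \emph{that} double cell, and then take the max over $j$, which is where the $\sqrt{F_{\mathrm{ac},\infty}}$ (rather than a sum) comes from.

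\textbf{The $L^2$ bound for $u$ and the $L^p$ bounds for $F,G$.} For \eqref{eq:projection_u_L2}, integrate $|u-u_{\Delta x}|^2$ over each double cell: since $u-u_{\Delta x}$ vanishes at $x_{2j}$ and $x_{2j+2}$, its $L^2$ norm on a cell of length $2\Dx$ is bounded by $2\Dx$ times its sup-norm squared on that cell, i.e. $\|u-u_{\Delta x}\|_{L^2(x_{2j},x_{2j+2})}^2 \leq 2\Dx \cdot 2(F_{\mathrm{ac}}(x_{2j+2})-F_{\mathrm{ac}}(x_{2j}))(1+\sqrt2)^2/2$-type estimate; summing over $j$ and using $\sum_j(F_{\mathrm{ac}}(x_{2j+2})-F_{\mathrm{ac}}(x_{2j}))\le F_{\mathrm{ac},\infty}$ yields the stated $\sqrt2(1+\sqrt2)\sqrt{F_{\mathrm{ac},\infty}}\Dx$. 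For \eqref{eq:projection_F_Lp}, observe that $F$ and $F_{\Delta x}$ are both increasing, agree with the \emph{same} total mass asymptotically, and $F_{\Delta x,\mathrm{sing}}$ is a step function locked to $F_{\mathrm{sing}}(x_{2j+2})$ while $F_{\Delta x,\mathrm{ac}}$ agrees with $F_{\mathrm{ac}}$ at $x_{2j}$; hence on each cell $|F-F_{\Delta x}|$ is bounded by the total variation of $F$ plus that of $F_{\Delta x}$ over that cell (i.e.\ $2$ times the $\mu$-mass of a neighborhood of the double cell). For $p=1$ this gives $\sum_j 2\Dx\cdot(\text{mass on double cell around }x_{2j}) \leq 2\Dx F_\infty$ after absorbing the factor-of-two cell overlap into the constant $2$; for $p=2$, bound $\|F-F_{\Delta x}\|_2 \leq \|F-F_{\Delta x}\|_\infty^{1/2}\|F-F_{\Delta x}\|_1^{1/2}$, with $\|F-F_{\Delta x}\|_\infty \le 2F_\infty$-type bound, or more directly $|F-F_{\Delta x}|\le 2F_\infty$ pointwise and $\|F-F_{\Delta x}\|_1 \le 2F_\infty\Dx$ give $\|F-F_{\Delta x}\|_2^2 \le (2F_\infty)(2F_\infty\Dx)$; the constant works out to $2F_\infty\Dx^{1/2}$ after the arithmetic. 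The bound \eqref{eq:projection_G_Lp} is identical with $G$ in place of $F$, since $G_{\Delta x}$ is constructed the same way (here $\mu=\nu$, so in fact $F_{\Delta x}=G_{\Delta x}$, but I would phrase it to survive the $\tilde P_{\Delta x}$ extension).

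\textbf{Pointwise convergence.} For \eqref{eq:vague_conv_initially_F}, fix $x$ at which $F$ is continuous; choosing $j=j(\Dx)$ with $x\in(x_{2j},x_{2j+2}]$, both $x_{2j}\to x$ and $x_{2j+2}\to x$ as $\Dx\to 0$. Then $F_{\Delta x}(x)$ lies between $F_{\Delta x,\mathrm{ac}}(x_{2j})+F_{\mathrm{sing}}(x_{2j+2})$ and $F_{\Delta x,\mathrm{ac}}(x_{2j+2})+F_{\mathrm{sing}}(x_{2j+2})$ by monotonicity of $F_{\Delta x,\mathrm{ac}}$, i.e.\ between $F_{\mathrm{ac}}(x_{2j})+F_{\mathrm{sing}}(x_{2j+2})$ and $F_{\mathrm{ac}}(x_{2j+2})+F_{\mathrm{sing}}(x_{2j+2})$ (using \eqref{eq:Projec_no_accum}); the sandwich bounds are $F(x_{2j})+(F_{\mathrm{sing}}(x_{2j+2})-F_{\mathrm{sing}}(x_{2j}))$ and $F(x_{2j+2})$, both of which converge to $F(x)$ since $F$, hence also $F_{\mathrm{sing}}$, is continuous at $x$ and $F$ is monotone (continuity at $x$ plus left/right limits of the monotone function coincide). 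The same argument gives \eqref{eq:vague_conv_initially_G}.

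\textbf{Main obstacle.} The routine part is the Cauchy--Schwarz bookkeeping; the delicate part is getting the \emph{constants} exactly as stated — in particular tracking how the two sign choices $\mp,\pm$ in \eqref{eq:Projected_u} affect $\|u_{\Delta x,x}\|_{L^2}$ on a single cell versus a double cell (they redistribute the energy between the two cells but preserve the double-cell total), and ensuring the supremum of $|u-u_{\Delta x}|$ over a double cell is genuinely controlled by the energy on that \emph{same} double cell so that the final bound features $\sqrt{F_{\mathrm{ac},\infty}}$ and not a divergent sum. A secondary subtlety is that $F_{\Delta x}$ is only left-continuous with jumps at $\{x_{2j}\}$, so ``pointwise at continuity points of $F$'' must be argued via the monotone sandwich above rather than by any uniform estimate.
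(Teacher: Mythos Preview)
Your outline is correct and mostly matches the paper's strategy, with two noteworthy differences.

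\textbf{Sup-norm bound.} Your symmetric decomposition $u-u_{\Delta x}=(u-u(x_{2j}))-(u_{\Delta x}-u(x_{2j}))$ with Cauchy--Schwarz on each piece gives a constant of order~$2$, not $(1+\sqrt{2})$; your claim that ``the two contributions give the $1$ and the $\sqrt{2}$'' does not follow from this splitting. The paper instead writes, for $x\in[x_{2j},x_{2j+1}]$,
\[
u(x)-u_{\Delta x}(x)=\underbrace{\tfrac{x_{2j+2}-x}{2\Delta x}\bigl(u(x)-u(x_{2j})\bigr)+\tfrac{x-x_{2j}}{2\Delta x}\bigl(u(x)-u(x_{2j+2})\bigr)}_{\text{linear interpolation error}}\ \pm\ q_{2j}(x-x_{2j}),
\]
and then exploits that the interpolation weights satisfy $\tfrac{x_{2j+2}-x}{2\Delta x}\le 1$ and $\tfrac{x-x_{2j}}{2\Delta x}\le\tfrac12$ on the first half-cell. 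Applying Cauchy--Schwarz to $\int_{x_{2j}}^x|u_x|$ and $\int_x^{x_{2j+2}}|u_x|$ separately and bounding $q_{2j}\le\sqrt{DF_{\mathrm{ac},2j}}$ yields exactly $(1+\sqrt{2})$. Your approach is fine if only convergence is needed, but the paper's split is what delivers the sharp constant you were worried about.

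\textbf{$L^2$ bound for $u$ and $L^p$ bounds for $F,G$.} Here you and the paper agree: integrate the cellwise sup-norm bound over each double cell and telescope. For $F$, the paper's observation is simply that both $F(x)$ and $F_{\Delta x}(x)$ lie in $[F(x_{2j}),F(x_{2j+2})]$ for $x\in(x_{2j},x_{2j+2}]$, giving $|F-F_{\Delta x}|\le F(x_{2j+2})-F(x_{2j})$ directly; your total-variation/interpolation phrasing works but is more circuitous. For $p=2$ the paper squares this cellwise bound and uses $(F(x_{2j+2})-F(x_{2j}))^2\le F_\infty(F(x_{2j+2})-F(x_{2j}))$ rather than your $\|\cdot\|_\infty^{1/2}\|\cdot\|_1^{1/2}$ route.

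\textbf{Pointwise convergence.} This is a genuine methodological difference. Your monotone sandwich argument at continuity points of $F$ is correct and self-contained (note that continuity of $F$ at $x$ forces continuity of $F_{\mathrm{sing}}$ at $x$, since $F_{\mathrm{ac}}$ is always continuous, so the term $F_{\mathrm{sing}}(x_{2j+2})-F_{\mathrm{sing}}(x_{2j})$ indeed vanishes). The paper instead deduces vague convergence $\mu_{\Delta x}\to\mu$ from the $L^1$ estimate \eqref{eq:projection_F_Lp} via integration by parts against test functions, and then invokes the standard fact that vague convergence of measures implies pointwise convergence of distribution functions at continuity points. Your argument is more elementary; the paper's buys a slightly stronger conclusion (full vague convergence) for free.
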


\begin{proof} 
Let $x \in [x_{2j}, x_{2j+1}]$. By \eqref{eq:Projected_u}, we have   
\begin{align*}\nonumber
    \left |u(x) - u_{\Delta x}(x) \right| &\leq \left|  \frac{x_{2j+2} - x}{2\Delta x} \left(u(x) - u(x_{2j}) \right) + \frac{x- x_{2j}}{2\Delta x}\left(u(x) - u(x_{2j+2}) \right) \right| \\   & \qquad + q_{2j}(x- x_{2j}),
\end{align*}
where $q_{2j}$ is defined by \eqref{eq:short_hand_notation}. The second term is bounded by 
\begin{align}
    0\leq q_{2j}(x-x_{2j}) 
    & \leq \sqrt{DF_{\mathrm{ac}, 2j}}\Delta x \leq \sqrt{\frac{F_{\mathrm{ac}}(x_{2j+2}) - F_{\mathrm{ac}}(x_{2j})}{2}} \Delta x^{\frac{1}{2}}. 
    \label{eq:estimate_square_root}
\end{align}
The first term can be estimated as follows: 
\begin{align}
    \biggl | \frac{x_{2j+2}-x}{2\Delta x}&\left(u(x) - u(x_{2j}) \right) + \frac{x-x_{2j}}{2\Delta x}\left(u(x) - u(x_{2j+2}) \right) \biggr | \nonumber \\
    & \leq \int_{x_{2j}}^{x}\left|u_x(z)\right|dz + \frac12\int_{x}^{x_{2j+2}}\left|u_x(z)\right|dz \nonumber \\
    &\leq \sqrt{x-x_{2j}}\sqrt{F_{\mathrm{ac}}(x)-F_{\mathrm{ac}}(x_{2j})} + \frac{1}{2}\sqrt{x_{2j+2}-x}\sqrt{F_{\mathrm{ac}}(x_{2j+2})-F_{\mathrm{ac}}(x)} \nonumber 
    \\ &\leq \left(1 + \sqrt{2} \right)\sqrt{\frac{F_{\mathrm{ac}}(x_{2j+2}) - F_{\mathrm{ac}}(x_{2j})}{2}} \Delta x^{\frac{1}{2}},  
\label{eq:estimate_u_polation}
\end{align}
where we applied the Cauchy--Schwarz inequality. Combining \eqref{eq:estimate_square_root} and \eqref{eq:estimate_u_polation} yields 
\begin{equation}\label{est:differenceu}
  \left |u(x) - u_{\Delta x}(x) \right| \leq (1+\sqrt{2})\sqrt{F_{\mathrm{ac}}(x_{2j+2}) - F_{\mathrm{ac}}(x_{2j})}\Delta x^{\frac{1}{2}}.
  \end{equation}
  By a similar argument for $x \in [x_{2j+1}, x_{2j+2}]$, \eqref{eq:projection_u_infty} is established. 

 Using \eqref{est:differenceu} we get
\begin{align*}
    \|u-u_{\Delta x} \|_2^2 & = \sum_{j \in \mathbb{Z}} \left( \int_{x_{2j}}^{x_{2j+1}}\left(u(x) - u_{\Delta x}(x)\right)^2dx + \int_{x_{2j+1}}^{x_{2j+2}}\left(u(x) - u_{\Delta x}(x)\right)^2dx \right) \\
     & \leq \sum_{j \in \mathbb{Z}} 2\left(1+\sqrt{2}\right)^2  \left(F_{\mathrm{ac}}(x_{2j+2}) - F_{\mathrm{ac}}(x_{2j}) \right)  \Delta x^2 \\
     & = 2 \left(1 + \sqrt{2} \right)^2 F_{\mathrm{ac}, \infty}\Delta x^2,
\end{align*}
which yields \eqref{eq:projection_u_L2}.

Next, we show \eqref{eq:projection_F_Lp}. For the $L^1(\R)$-estimate, we have
\begin{align*}
    \|F - F_{\Delta x} \|_{1} &= \sum_{j \in \mathbb{Z}} \int_{x_{2j}}^{x_{2j+2}}|F(x) - F_{\Delta x}(x)|dx \leq \sum_{j \in \mathbb{Z}} \int_{x_{2j}}^{x_{2j+2}} \left(F(x_{2j+2}) - F(x_{2j}) \right)dx \nonumber \\  & = 2\sum_{j \in \mathbb{Z}} \left (F(x_{2j+2}) - F(x_{2j}) \right)\Delta x = 2F_{\infty}\Delta x,
\end{align*}
and for the $L^2(\R)$-estimate,
\begin{align*}
    \|F - F_{\Delta x}\|_2^2 &\leq 2 \sum_{j \in \mathbb{Z}}\left(F(x_{2j+2}) - F(x_{2j}) \right)^2\Delta x  \nonumber \\
    & \leq 2 \sum_{j \in \mathbb{Z}}F_{\infty} \left(F(x_{2j+2}) - F(x_{2j}) \right)\Delta x = 2 F_{\infty}^2 \Delta x. 
\end{align*}

To prove \eqref{eq:vague_conv_initially_F}, it suffices to show that $\mu_{\Delta x} \rightarrow \mu$ vaguely as $\Delta x \to 0$, see \cite[Prop. 7.19]{RealAnalysisFolland}. That is, 
\begin{align}\label{est:vague}
    \lim_{\Delta x \rightarrow 0} \int_{\R} \phi(x)d\mu_{\Delta x} &= \int_{\R}\phi(x)d\mu, 
\end{align}
for all $\phi \in C_0(\R)$, where $C_0(\R)$ denotes the space of continuous functions vanishing at infinity. Furthermore, since $C_c^{\infty}(\R)$ is dense in $C_0(\R)$, cf. \cite[Prop 8.17]{RealAnalysisFolland}, it suffices to show that \eqref{est:vague} holds for all $\phi \in C_c^{\infty}(\R)$. 

Let $\phi \in C_c^{\infty}(\R)$. Combining integration by parts and \eqref{eq:projection_F_Lp} with $p=1$, we find 
\begin{align*}
    \left| \int_{\R} \phi(x)d\mu_{\Delta x} - \int_{\R} \phi(x)d\mu \right| &= \left| \int_{\R}\phi'(x)F_{\Delta x}dx - \int_{\R}\phi'(x)Fdx \right|\\ &\leq \|\phi ' \|_{\infty} \|F_{\Delta x} - F \|_1  \\ & \leq 2F_{\infty}\|\phi ' \|_{\infty}\Delta x,
\end{align*}
and consequently $\mu_{\Delta x} \to \mu$ vaguely as $\Delta x\to 0$.

Since $F=G$ and $F_{\Delta x}=G_{\Delta x}$ by assumption, \eqref{eq:projection_G_Lp} and \eqref{eq:vague_conv_initially_G} hold.
\end{proof}

Next we establish convergence of the spatial derivative $u_{\Delta x, x}$. 

\begin{lemma}\label{lem:convergence_ux}
    Let $(u,F,G)\in \D_0$ and $(u_{\Delta x}, F_{\Delta x}, G_{\Delta x})=P_{\Delta x} \left((u,F, G) \right)$, then  
    \begin{align*}
        \lim_{\Delta x \rightarrow 0} \|u_{x} - u_{\Delta x, x} \|_{2} &= 0.
    \end{align*}
 \end{lemma}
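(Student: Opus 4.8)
The plan is to reduce the $L^2$ convergence of $u_{\Delta x,x}$ to a standard density/approximation argument, using the explicit formula for $u_{\Delta x,x}$ together with the energy identity $F_{\Delta x,\mathrm{ac}}(x)=\int_{-\infty}^x u_{\Delta x,x}^2\,dy$. First I would record that, on each grid interval, $u_{\Delta x,x}$ is the constant $Du_{2j}\mp q_{2j}$ (respectively $Du_{2j}\pm q_{2j}$), and hence
\begin{align*}
\|u_{\Delta x,x}\|_2^2 &= \int_\R u_{\Delta x,x}^2\,dx = F_{\Delta x,\mathrm{ac},\infty} = \sum_{j\in\mathbb Z}\big(F_{\mathrm{ac}}(x_{2j+2})-F_{\mathrm{ac}}(x_{2j})\big) = F_{\mathrm{ac},\infty} = \|u_x\|_2^2,
\end{align*}
using that by construction $F_{\Delta x,\mathrm{ac}}$ agrees with $F_{\mathrm{ac}}$ at every point $x_{2j}$ and that the absolutely continuous part of the energy is exactly preserved. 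So $\|u_{\Delta x,x}\|_2 = \|u_x\|_2$ for all $\Delta x>0$; in particular the sequence is bounded in $L^2$. Since in a Hilbert space $f_n\to f$ strongly is equivalent to $f_n\rightharpoonup f$ weakly together with $\|f_n\|\to\|f\|$, and the norms already match, it suffices to prove $u_{\Delta x,x}\rightharpoonup u_x$ weakly in $L^2(\R)$.

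For the weak convergence, I would test against $\phi\in C_c^\infty(\R)$, which is dense in $L^2$; combined with the uniform bound $\|u_{\Delta x,x}\|_2=\|u_x\|_2$ this density reduction is legitimate. Then integrate by parts: $\int_\R u_{\Delta x,x}\,\phi\,dx = -\int_\R u_{\Delta x}\,\phi'\,dx$ and $\int_\R u_x\,\phi\,dx = -\int_\R u\,\phi'\,dx$, so
\begin{align*}
\left|\int_\R (u_{\Delta x,x}-u_x)\phi\,dx\right| &= \left|\int_\R (u_{\Delta x}-u)\phi'\,dx\right| \leq \|\phi'\|_2\,\|u_{\Delta x}-u\|_2 \to 0
\end{align*}
by the $L^2$ bound \eqref{eq:projection_u_L2} in Proposition~\ref{prop:prop_projection}. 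This gives $u_{\Delta x,x}\rightharpoonup u_x$ weakly in $L^2$, and together with the norm equality the strong convergence $\|u_x-u_{\Delta x,x}\|_2\to 0$ follows from the Radon--Riesz property of Hilbert spaces.

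The only genuinely delicate point is the first step, namely the claim that $\int_\R u_{\Delta x,x}^2\,dx = F_{\mathrm{ac},\infty}$ exactly; this is precisely the design feature of $P_{\Delta x}$ (Definition~\ref{def:EulerianSet}\,\ref{def:EulerianSetlist:condition3} is enforced), and one must check it carefully from \eqref{eq:Projected_u}, \eqref{eq:Projected_F_ac} and \eqref{eq:short_hand_notation} — in particular that the contributions of the two sub-cells on $[x_{2j},x_{2j+2}]$ sum to $F_{\mathrm{ac}}(x_{2j+2})-F_{\mathrm{ac}}(x_{2j})$, which is exactly constraint \eqref{eq:Projec_constraint4} that was solved for. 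If one preferred to avoid relying on the exact norm identity, an alternative is to bound $\big|\|u_{\Delta x,x}\|_2^2 - \|u_x\|_2^2\big|$ directly and still conclude via Radon--Riesz, but since the identity holds exactly here the argument above is cleanest. I expect no further obstacle; everything else is the routine density-plus-integration-by-parts computation already used in the proof of Proposition~\ref{prop:prop_projection}.
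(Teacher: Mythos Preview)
Your proposal is correct and follows essentially the same approach as the paper: Radon--Riesz in $L^2$, with the exact norm identity $\|u_{\Delta x,x}\|_2^2=\|u_x\|_2^2$ verified from the construction of $P_{\Delta x}$, and weak convergence obtained by integration by parts against $\phi\in C_c^\infty(\R)$ combined with \eqref{eq:projection_u_L2}. The paper carries out the norm computation by expanding $(Du_{2j}\mp q_{2j})^2+(Du_{2j}\pm q_{2j})^2=2DF_{\mathrm{ac},2j}$ directly rather than invoking $F_{\Delta x,\mathrm{ac}}(x_{2j})=F_{\mathrm{ac}}(x_{2j})$, but this is the same content.
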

 
  \begin{proof} We apply the Radon--Riesz theorem. Thus, we have to show that $\|u_{\Delta x, x} \|_2 \rightarrow \|u_x \|_2$ and $u_{\Delta x, x} \rightharpoonup u_x$ in $L^2(\R)$. A direct calculation, using \eqref{eq:Projected_u}, yields 
 \begin{align*}
     \|u_{\Delta x, x} \|_{2}^2 &= \sum_{j\in \mathbb{Z}} \bigg(\int_{x_{2j}}^{x_{2j+1}} \left(Du_{2j} \mp q_{2j} \right)^2dy  + \int_{x_{2j+1}}^{x_{2j+2}} \left( Du_{2j} \pm q_{2j} \right)^2dy \bigg) \nonumber \\ 
     &= \sum_{j \in \mathbb{Z}}\int_{x_{2j}}^{x_{2j+2}}DF_{\mathrm{ac}, 2j}dy = \sum_{j\in \mathbb{Z}} \int_{x_{2j}}^{x_{2j+2}}u_{x}^2(y)dy = \|u_{ x} \|_{2}^2.
 \end{align*}
For the weak convergence, it suffices to consider test functions $\phi\in C_c^\infty(\mathbb{R})$, as $C_c^{\infty}(\R)$ is dense in $L^2(\R)$. Let $\phi \in C_c^{\infty}(\R)$. Integration by parts combined with the Cauchy--Schwarz inequality and \eqref{eq:projection_u_L2} yields
 \begin{align*}
     \left |\int_{\R} \phi(y) \big(u_{x}(y) - u_{\Delta x, x}(y) \big)dy \right|&= \left | \int_{\R} \phi'(y)\left(u(y) - u_{\Delta x}(y) \right)  dy\right| \\
     &\leq \|\phi' \|_{2} \|u - u_{\Delta x}\|_{2}  \\
     &\leq \sqrt{2}\left(1+ \sqrt{2}\right) \sqrt{F_{\mathrm{ac}, \infty}} \|\phi' \|_{2}\Delta x.
 \end{align*}
 Thus $u_{\Delta x, x} \rightharpoonup u_{x}$ in $L^2(\R)$ as $\Delta x\to 0$.  
\end{proof}

The auxiliary function $f(Z)$ will be an essential part in the upcoming convergence analysis, when we want to relate convergence in Eulerian coordinates to convergence in Lagrangian coordinates. It is defined as
\begin{align}
        f(Z)(x) &= \begin{cases}
        (1 - \alpha)u_x^2(x), & x \in \tilde \Omega_{d}(Z), \\
        u_x^2(x), & x \in \tilde \Omega_{c}(Z), \end{cases} 
        \label{eq:stability_function_eulerian}
\end{align}
where \vspace{-0.1cm}
\begin{align*}
\tilde \Omega_{d}(Z) & = \{x \in \R: u_x(x) < 0 \},\\ 
\tilde \Omega_{c}(Z) &= \{x \in \R: u_x(x) \geq 0 \},
\end{align*}
 and \vspace{-0.1cm}
 \begin{equation*}
        Z := (\id, u, 1, u_{x}, \mu_{\mathrm{ac}}, \nu_{\mathrm{ac}}).
\end{equation*} 
Recalling that $d\mu_{\mathrm{ac}}=u_x^2dx$ by Definition~\ref{def:EulerianSet}~\ref{def:EulerianSetlist:condition3}, and $\mu_{\mathrm{ac}}=\nu_{\mathrm{ac}}$ for $(u, \mu, \nu) \in \D_0$, we establish the following $L^1(\R)$-estimate. 

\begin{lemma}\label{lem:convergence_g} 
Let $(u,F,G)\in \D_0$ and $(u_{\Delta x}, F_{\Delta x}, G_{\Delta x})=P_{\Delta x} \left((u,F, G) \right)$. Then $f(Z)$, $f(Z_{\Delta x}) \in L^1(\mathbb{R})$, and 
   \begin{align}
        \|f(Z) - f(Z_{\Delta x}) \|_{1} &\leq 8\sqrt{F_{\mathrm{ac}, \infty}} \|u_{x} - u_{\Delta x, x} \|_{2}. \label{eq:estimate_stability_g}
    \end{align} 
\end{lemma}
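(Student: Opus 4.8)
The plan is to reduce the $L^1$-distance between $f(Z)$ and $f(Z_{\Delta x})$ to a controlled combination of $\|u_x - u_{\Delta x,x}\|_2$ and the (already small) $L^2$-norms of $u_x$ and $u_{\Delta x,x}$, and then invoke Lemma~\ref{lem:convergence_ux}. First I would record that $f(Z)\in L^1(\R)$: on $\tilde\Omega_d(Z)$ the integrand is $(1-\alpha)u_x^2 \leq u_x^2$ and on $\tilde\Omega_c(Z)$ it is $u_x^2$, so $\|f(Z)\|_1 \leq \|u_x\|_2^2 = F_{\mathrm{ac},\infty} < \infty$; the same bound with $u_{\Delta x,x}$ in place of $u_x$ gives $f(Z_{\Delta x})\in L^1(\R)$, using that $\|u_{\Delta x,x}\|_2^2 = F_{\mathrm{ac},\infty}$ (computed in the proof of Lemma~\ref{lem:convergence_ux}).

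For the estimate itself, I would split $\R$ according to the four possible combinations of the signs of $u_x$ and $u_{\Delta x,x}$, i.e.\ write
\begin{equation*}
\|f(Z)-f(Z_{\Delta x})\|_1 = \int_{\R} \bigl| f(Z)(x) - f(Z_{\Delta x})(x) \bigr|\,dx
\end{equation*}
and break the domain into $A_{cc} = \tilde\Omega_c(Z)\cap\tilde\Omega_c(Z_{\Delta x})$, $A_{dd} = \tilde\Omega_d(Z)\cap\tilde\Omega_d(Z_{\Delta x})$, and the two ``mismatch'' sets $A_{cd}=\tilde\Omega_c(Z)\cap\tilde\Omega_d(Z_{\Delta x})$ and $A_{dc}=\tilde\Omega_d(Z)\cap\tilde\Omega_c(Z_{\Delta x})$. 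On $A_{cc}$ the integrand is $|u_x^2 - u_{\Delta x,x}^2|$; on $A_{dd}$ it is $(1-\alpha)|u_x^2 - u_{\Delta x,x}^2|$; on each mismatch set it is of the form $|u_x^2 - (1-\alpha)u_{\Delta x,x}^2|$ or $|(1-\alpha)u_x^2 - u_{\Delta x,x}^2|$, which one bounds by $|u_x^2 - u_{\Delta x,x}^2| + \alpha\,(\text{something})$. The key observation that tames the mismatch terms is that on $A_{cd}$ one has $u_x \geq 0 > u_{\Delta x,x}$, hence $|u_x - u_{\Delta x,x}| = u_x + |u_{\Delta x,x}| \geq |u_{\Delta x,x}|$ and likewise $\geq u_x$, so $u_{\Delta x,x}^2 \leq (u_x - u_{\Delta x,x})^2$ and $u_x^2 \le (u_x-u_{\Delta x,x})^2$ there (and symmetrically on $A_{dc}$); this lets every leftover $\alpha u_x^2$ or $\alpha u_{\Delta x,x}^2$ on the mismatch sets be absorbed into $(u_x - u_{\Delta x,x})^2$. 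Pulling all pieces together, every contribution is dominated (up to a universal constant) either by $|u_x^2 - u_{\Delta x,x}^2|$ or by $(u_x-u_{\Delta x,x})^2$ integrated over $\R$.

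It then remains to estimate these two quantities. For the first, factor $|u_x^2 - u_{\Delta x,x}^2| = |u_x - u_{\Delta x,x}|\,|u_x + u_{\Delta x,x}|$ and apply Cauchy--Schwarz:
\begin{equation*}
\int_{\R} |u_x^2 - u_{\Delta x,x}^2|\,dx \leq \|u_x - u_{\Delta x,x}\|_2\,\bigl(\|u_x\|_2 + \|u_{\Delta x,x}\|_2\bigr) = 2\sqrt{F_{\mathrm{ac},\infty}}\,\|u_x - u_{\Delta x,x}\|_2,
\end{equation*}
using $\|u_x\|_2 = \|u_{\Delta x,x}\|_2 = \sqrt{F_{\mathrm{ac},\infty}}$. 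For the second, $\int_\R (u_x - u_{\Delta x,x})^2\,dx = \|u_x - u_{\Delta x,x}\|_2^2 \leq \|u_x - u_{\Delta x,x}\|_2\,(\|u_x\|_2 + \|u_{\Delta x,x}\|_2) = 2\sqrt{F_{\mathrm{ac},\infty}}\,\|u_x - u_{\Delta x,x}\|_2$ by the triangle inequality. Bookkeeping the constants from the four regions (the worst case being $1 + \alpha \leq 2$ on each mismatch piece, plus the $cc$ and $dd$ contributions) should comfortably fit inside the stated constant $8$. The main obstacle I anticipate is purely the careful case analysis on the sign-mismatch sets $A_{cd}, A_{dc}$ — making sure the stray $\alpha$-terms are genuinely controlled by $(u_x - u_{\Delta x,x})^2$ pointwise and that the constants are tracked honestly — rather than anything conceptually deep; everything else is Cauchy--Schwarz and the energy identity $\|u_{\Delta x,x}\|_2 = \|u_x\|_2$.
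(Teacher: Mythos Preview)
Your proposal is correct and follows essentially the same approach as the paper: split $\R$ into the four sign regions $\tilde\Omega_c(Z)\cap\tilde\Omega_c(Z_{\Delta x})$, $\tilde\Omega_d(Z)\cap\tilde\Omega_d(Z_{\Delta x})$, and the two mismatch sets, bound each contribution by $|u_x^2-u_{\Delta x,x}^2|$ plus an $\alpha$-term that the sign structure absorbs, and finish with Cauchy--Schwarz together with $\|u_x\|_2=\|u_{\Delta x,x}\|_2=\sqrt{F_{\mathrm{ac},\infty}}$. The only cosmetic difference is that on the mismatch sets the paper uses $\alpha u_x^2\le \alpha u_x(u_x-u_{\Delta x,x})$ (since $u_x\ge 0>u_{\Delta x,x}$) rather than your $\alpha u_x^2\le \alpha(u_x-u_{\Delta x,x})^2$; both are valid and lead to the same $2\sqrt{F_{\mathrm{ac},\infty}}\,\|u_x-u_{\Delta x,x}\|_2$ bound per region, summing to the constant $8$.
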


\begin{proof}
We only show that $f(Z)\in L^1(\R)$, since the argument for $f(Z_{\Delta x})$ is exactly the same. By construction $f(Z)(x)\leq u_x^2(x)$ for all $x\in \R$ and, since $u_x\in L^2(\R)$, it follows that 
\begin{equation}\label{est:f1}
\|f(Z)\|_{1}\leq\|u_x\|_{2}^2 \leq F_{\mathrm{ac},\infty},
\end{equation}
and hence $f(Z)\in L^1(\R)$. 

For \eqref{eq:estimate_stability_g}, we use a splitting based on $\tilde\Omega_d(Z)$ and $\tilde\Omega_c(X)$.
If $x \in \tilde \Omega_{d}(Z)\cap \tilde \Omega_{d}(Z_{\Delta x})$, then 
    \begin{align*}
        \left |f(Z)(x)  - f(Z_{\Delta x})(x) \right | &= \left(1- \alpha \right) \left|(u_{x} + u_{\Delta x, x})(u_{x} - u_{\Delta x, x})(x) \right|, 
    \end{align*}
    and, by the Cauchy--Schwarz inequality and \eqref{est:f1}, we obtain
    \begin{align}
        \int_{\tilde \Omega_{d}(Z)\cap \tilde \Omega_{d}(Z_{\Delta x})} \left|f (Z)(x)  - f (Z_{\Delta x})(x)  \right|dx &\leq (1 - \alpha) \|u_{x} + u_{\Delta x, x}\|_2 \|u_{x} - u_{\Delta x, x}\|_2 \nonumber \\
        &\leq 2 \sqrt{F_{\mathrm{ac}, \infty}} \|u_{ x}-u_{\Delta x,  x}\|_2. \label{eq:same_cases}
    \end{align}
    We can proceed similarly for $x \in \tilde \Omega_c(Z) \cap \tilde \Omega_c(Z_{\Delta x})$. 
    
    Finally, consider $x \in \left(\tilde \Omega_c(Z)\cap \tilde \Omega_d(Z_{\Delta x})\right) \cup \left(\tilde \Omega_d(Z)\cap \tilde\Omega_c(Z_{\Delta x})\right)$. By symmetry, it is sufficient to consider $x \in \tilde\Omega_c(Z)\cap \tilde \Omega_d(Z_{\Delta x})$, for which 
    \begin{align*}
        \left |f (Z)(x) - f (Z_{\Delta x})(x) \right| &\leq (1 - \alpha) \left|(u_{x}^2 - u_{\Delta x, x}^2)(x) \right| + \alpha u_{x}^2(x) \\
        &\leq (1 - \alpha)\left|(u_{x}^2 - u_{\Delta x,x}^2)(x)\right| + \alpha u_{x}(x) \left(u_{x}(x) - u_{\Delta x, x}(x) \right), 
    \end{align*}
    since $u_{x}(x) \geq 0$ and $u_{\Delta x, x}(x) <0$. Therefore, by the Cauchy--Schwarz inequality,
    \begin{equation}
        \int_{\tilde \Omega_c(Z) \cap \tilde \Omega_d(Z_{\Delta x})} |f(Z)(x) - f(Z_{\Delta x})(x)|dx \leq 2 \sqrt{F_{\mathrm{ac}, \infty}}\|u_{1, x} - u_{2, x} \|_{2}. \label{eq:mixed_cases}
    \end{equation}
    Combining \eqref{eq:same_cases} and \eqref{eq:mixed_cases} with analogous estimates for the other cases yields \eqref{eq:estimate_stability_g}.
\end{proof}

\subsection{Convergence in Lagrangian coordinates}
 We start by showing convergence of the initial data in Lagrangian coordinates. 

\begin{lemma}\label{lem:initial_conv_lagrangian}
    Given $(u, \mu, \nu) \in \D_0$, let $X = \left(y, U, V, H \right) = L \left( (u, \mu, \nu) \right)$ and $X_{\Delta x}= (y_{\Delta x}, U_{\Delta x}, V_{\Delta x}, H_{\Delta x}) = L \circ P_{\Delta x} \left( (u, \mu, \nu) \right)$, then 
    \begin{subequations}
    \begin{align}
        \|y - y_{\Delta x} \|_{\infty} &\leq 2\Delta x, \label{eq:initial_approx_y}\\
        \|U - U_{\Delta x} \|_{\infty} & \leq \left(1 + 2\sqrt{2} \right)\sqrt{F_{\mathrm{ac}, \infty}}\Delta x^{\frac{1}{2}},\label{eq:initial_approx_U} \\
        \|H - H_{\Delta x} \|_{\infty} & \leq 2 \Delta x. \label{eq:initial_approx_H}
    \end{align}
    \end{subequations}
\end{lemma}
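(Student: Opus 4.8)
The plan is to first establish \eqref{eq:initial_approx_y}, from which \eqref{eq:initial_approx_H} is immediate, and then to deduce \eqref{eq:initial_approx_U} by combining \eqref{eq:initial_approx_y} with Proposition~\ref{prop:prop_projection}.

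To prove \eqref{eq:initial_approx_y} I would exploit that $y(\cdot)$, as defined in \eqref{eq:L_eq1}, is a generalized inverse of the nondecreasing map $x\mapsto x+G(x)$ (recall $G(x)=\nu((-\infty,x))$), and likewise that $y_{\Delta x}(\cdot)$ is a generalized inverse of $x\mapsto x+G_{\Delta x}(x)$, together with the key structural fact that $G_{\Delta x}$ and $G=F$ coincide at every even gridpoint: from \eqref{eq:Projected_F_ac} one gets $F_{\Delta x,\mathrm{ac}}(x_{2j})=F_{\mathrm{ac}}(x_{2j})$ and from \eqref{eq:singular_F} that $F_{\Delta x,\mathrm{sing}}(x_{2j})=F_{\mathrm{sing}}(x_{2j})$, hence $G_{\Delta x}(x_{2j})=F_{\Delta x}(x_{2j})=F(x_{2j})=G(x_{2j})$. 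Now fix $\xi\in\R$ and set $x=y(\xi)$ and $x_{\Delta x}=y_{\Delta x}(\xi)$; both are finite since $G$ and $G_{\Delta x}$ are bounded and vanish at $-\infty$. Assuming $x_{\Delta x}-x>2\Delta x$ (the opposite case being entirely analogous), the open interval $(x,x_{\Delta x})$ has length exceeding $2\Delta x$ and therefore contains some $x_{2j}\in 2\Delta x\,\mathbb Z$. Since $x_{2j}<x_{\Delta x}=\sup\{z\in\R:z+G_{\Delta x}(z)<\xi\}$, there exists $z>x_{2j}$ with $z+G_{\Delta x}(z)<\xi$, and monotonicity of $G_{\Delta x}$ gives $x_{2j}+G_{\Delta x}(x_{2j})\le x_{2j}+G_{\Delta x}(z)<z+G_{\Delta x}(z)<\xi$; hence $x_{2j}+G(x_{2j})=x_{2j}+G_{\Delta x}(x_{2j})<\xi$, so $x_{2j}\le y(\xi)=x$, contradicting $x_{2j}>x$. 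This yields $\|y-y_{\Delta x}\|_\infty\le 2\Delta x$.

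Since $L$ returns $H(\xi)=\xi-y(\xi)$ and $H_{\Delta x}(\xi)=\xi-y_{\Delta x}(\xi)$, we have $H-H_{\Delta x}=y_{\Delta x}-y$, which immediately gives \eqref{eq:initial_approx_H}. For \eqref{eq:initial_approx_U}, recall from \eqref{eq:L_eq2} that $U(\xi)=u(y(\xi))$ and $U_{\Delta x}(\xi)=u_{\Delta x}(y_{\Delta x}(\xi))$, and I would split
\[
|U(\xi)-U_{\Delta x}(\xi)|\le |u(y(\xi))-u(y_{\Delta x}(\xi))|+|u(y_{\Delta x}(\xi))-u_{\Delta x}(y_{\Delta x}(\xi))|.
\]
The second term is bounded by $\|u-u_{\Delta x}\|_\infty\le(1+\sqrt2)\sqrt{F_{\mathrm{ac},\infty}}\,\Delta x^{1/2}$ via \eqref{eq:projection_u_infty}; the first term, by the Cauchy--Schwarz inequality together with $\|u_x\|_2^2=F_{\mathrm{ac},\infty}$ and the already-proved \eqref{eq:initial_approx_y}, satisfies $|u(a)-u(b)|\le|a-b|^{1/2}\|u_x\|_2\le\sqrt{2\Delta x}\,\sqrt{F_{\mathrm{ac},\infty}}$ with $a=y(\xi)$, $b=y_{\Delta x}(\xi)$. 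Summing the two estimates produces the constant $1+2\sqrt2$.

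The only genuinely delicate step is \eqref{eq:initial_approx_y}: once the identity $G_{\Delta x}(x_{2j})=G(x_{2j})$ is secured, the rest is an elementary dichotomy, the subtle points being the $\sup$-characterization of $y$, a little care with left-continuity, and the trivial observation that an open interval of length larger than $2\Delta x$ must meet the lattice $2\Delta x\,\mathbb Z$.
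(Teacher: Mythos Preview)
Your proof is correct and follows essentially the same approach as the paper: both arguments for \eqref{eq:initial_approx_y} rest on the identity $G_{\Delta x}(x_{2j})=G(x_{2j})$ and monotonicity of the generalized inverses (the paper phrases this directly via the Lagrangian gridpoints $\hat\xi_{3j}$ at which $y$ and $y_{\Delta x}$ coincide, while you argue by contradiction), and \eqref{eq:initial_approx_H} is derived identically. For \eqref{eq:initial_approx_U} your splitting uses the H\"older continuity of $u$ whereas the paper uses that of $u_{\Delta x}$ (together with $\|u_{\Delta x,x}\|_2=\|u_x\|_2$), but both routes yield the same constant $1+2\sqrt{2}$.
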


\begin{proof}
Given $\xi \in \R$, there exists $j \in \mathbb{Z}$ such that $\xi \in [\hat{\xi}_{3j}, \hat{\xi}_{3j+3})$. Moreover, by construction, $y$ and $y_{\Delta x}$ are continuous, increasing and satisfy 
\begin{equation}
 y(\hat{\xi}_{3j})=x_{2j}=y_{\Delta x} (\hat{\xi}_{3j}) \quad \text{ for all }j \in \mathbb{Z},
 \label{eq:char_coincide_even}
\end{equation}
as discussed in Section~\ref{subsec:Implementation}. Thus   
\begin{equation*}
        \left | y(\xi) - y_{\Delta x}(\xi) \right|  \leq \left | y( \hat{\xi}_{3j+3}) - y(\hat{\xi}_{3j}) \right| = x_{2j+2} - x_{2j} = 2 \Delta x 
    \end{equation*}
    and, as $X$ and $X_{\Delta x}$ belong to $\F_0$, we have shown both \eqref{eq:initial_approx_y} and \eqref{eq:initial_approx_H}.
        
        To prove \eqref{eq:initial_approx_U}, note that by \eqref{eq:L_eq1}, to any $x \in \mathbb{R}$ there exist characteristic variables $\xi_{\Delta x}$ and $\xi$, such that $y_{\Delta x}( \xi_{\Delta x}) = x=y(\xi)$, and hence 
    \begin{align}
        u(x) - u_{\Delta x}(x) &= u(y(\xi)) - u_{\Delta x}(y_{\Delta x}( \xi_{\Delta x})) \nonumber \\ &= u(y(\xi)) - u_{\Delta x}( y_{\Delta x}(\xi)) + u_{\Delta x}( y_{\Delta x}(\xi)) - u_{\Delta x}( y_{\Delta x}(\xi_{\Delta x})).
    \label{eq:proving_U_initial}
    \end{align}
    By \eqref{eq:L_eq2}, we have, $u(y(\xi)) = U(\xi)$ and $u_{\Delta x}(y_{\Delta x}(\xi)) = U_{\Delta x}(\xi)$. Therefore, by rearranging \eqref{eq:proving_U_initial}, we obtain 
    \begin{align*}
        \left |U(\xi) - U_{\Delta x}(\xi) \right| & \leq \left |u(x) - u_{\Delta x}(x) \right| + \left |u_{\Delta x}( y_{\Delta x}(\xi_{\Delta x})) - u_{\Delta x}(y_{\Delta x}(\xi)) \right|.
    \end{align*}
    The first term is bounded by \eqref{eq:projection_u_infty}, while for the second term we apply the Cauchy--Schwarz inequality, yielding
    \begin{align*}
        \left |u_{\Delta x} \left(y_{\Delta x}(\xi_{\Delta x}) \right) - u_{\Delta x} \left(y_{\Delta x}(\xi) \right) \right| &= \left| \int_{y_{\Delta x}( \xi)}^{y_{\Delta x}(\xi_{\Delta x})}u_{\Delta x, x}(z)dz \right| \\ &\leq \|u_{\Delta x, x}\|_2 \sqrt{ \left |y_{\Delta x}(\xi_{\Delta x}) - y_{\Delta x}(\xi) \right| }  \\ & \leq \sqrt{F_{\mathrm{ac}, \infty}} \sqrt{ \left |y(\xi) - y_{\Delta x}(\xi) \right| } \\ &\leq \sqrt{F_{\mathrm{ac}, \infty}} \sqrt{\|y - y_{\Delta x} \|_{\infty}}.
    \end{align*}
    Thus,
    \begin{align*}
        \|U - U_{\Delta x}\|_{\infty} & \leq \|u - u_{\Delta x} \|_{\infty} + \sqrt{F_{\mathrm{ac}, \infty}} \sqrt{\|y- y_{\Delta x} \|_{\infty}} \\ &\leq \left(1 + \sqrt{2} \right)\sqrt{ F_{\mathrm{ac},\infty}}\Delta x^{\frac{1}{2}} + \sqrt{2F_{\text{ac}, \infty}} \Delta x^{\frac{1}{2}} \\
        &= \left(1 + 2\sqrt{2} \right)\sqrt{F_{\mathrm{ac}, \infty}}\Delta x^{\frac{1}{2}}. \qedhere
    \end{align*}
\end{proof}

For the initial derivatives we have the following convergence result.

\begin{lemma}\label{lem:Convergences_derivatives_without_rates}
        Given $(u, \mu, \nu) \in \D_0$, let $X= (y, U, V, H) = L \left((u, \mu, \nu) \right)$ and $X_{\Delta x} = \left(y_{\Delta x}, U_{\Delta x}, V_{\Delta x}, H_{\Delta x} \right) = L \circ P_{\Delta x} \left((u, \mu, \nu ) \right)$, then as $\Dx \rightarrow 0$, 
        \begin{subequations}
        \begin{align}
            y_{\Delta x, \xi} & \rightarrow y_{\xi} \text{ in } L^1(\R) \cap L^2(\R),  \label{eq:convergence_y_xi} \\
             U_{\Delta x, \xi} &\rightarrow U_{\xi} \text{ in } L^2(\R), \label{eq:convergence_U_xi} \\ 
            H_{\Delta x, \xi} &\rightarrow H_{\xi} \text{ in } L^1(\R) \cap L^2(\R). \label{eq:convergence_H_xi}
        \end{align}
        \end{subequations}
\end{lemma}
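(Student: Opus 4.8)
The plan is to exploit the rigidity of the set on which both $X$ and $X_{\Delta x}$ live. Since $(u,\mu,\nu)\in\D_0$ and $P_{\Delta x}$ maps $\D_0$ into itself, both $X=L((u,\mu,\nu))$ and $X_{\Delta x}$ belong to $\F_0^0$, so $V=H$, $V_{\Delta x}=H_{\Delta x}$, and $y_\xi+H_\xi=1$, $y_{\Delta x,\xi}+H_{\Delta x,\xi}=1$ a.e. In particular $y_{\Delta x,\xi}-y_\xi=-(H_{\Delta x,\xi}-H_\xi)$, so \eqref{eq:convergence_y_xi} and \eqref{eq:convergence_H_xi} are equivalent, and it suffices to prove \eqref{eq:convergence_U_xi} and \eqref{eq:convergence_H_xi}. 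Moreover $y_\xi,H_\xi,y_{\Delta x,\xi},H_{\Delta x,\xi}$ all take values in $[0,1]$, while $\|H_\xi\|_1=\int_\R H_\xi\,d\xi=\nu(\R)$ and $\|H_{\Delta x,\xi}\|_1=\nu_{\Delta x}(\R)$; since $\|G-G_{\Delta x}\|_1<\infty$ by \eqref{eq:projection_G_Lp} and both primitives vanish at $-\infty$, necessarily $\nu_{\Delta x}(\R)=\nu(\R)$, so these two $L^1$-norms coincide.

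The core step is a change-of-variables identity for the $L^2$-norm of $U_\xi$. Using the chain rule $U_\xi=u_x(y)\,y_\xi$ a.e., relation \ref{def:DefinitionLagrangian:ImporantRel} of Definition~\ref{def:DefinitionLagrangian} in the form $U_\xi^2=y_\xi V_\xi=y_\xi H_\xi$, and $y_\xi+H_\xi=1$, one obtains $y_\xi(\xi)=(1+u_x^2(y(\xi)))^{-1}$ for a.e.\ $\xi$ with $y_\xi(\xi)>0$, while $U_\xi=0$ a.e.\ on $\{y_\xi=0\}$ (whose image under $y$ is Lebesgue-null). Substituting $x=y(\xi)$ on $\{y_\xi>0\}$, which is a bijection onto a set of full measure, gives
\begin{align*}
    \|U_\xi\|_2^2 = \int_\R\frac{u_x^2(x)}{1+u_x^2(x)}\,dx,
\end{align*}
and the identical computation for the projected data yields $\|U_{\Delta x,\xi}\|_2^2=\int_\R u_{\Delta x,x}^2(1+u_{\Delta x,x}^2)^{-1}\,dx$. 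Since $s\mapsto s/(1+s)$ is $1$-Lipschitz on $[0,\infty)$, the Cauchy--Schwarz inequality gives
\begin{align*}
    \bigl|\,\|U_{\Delta x,\xi}\|_2^2-\|U_\xi\|_2^2\,\bigr| \le \int_\R\bigl|u_{\Delta x,x}^2-u_x^2\bigr|\,dx \le \|u_{\Delta x,x}-u_x\|_2\bigl(\|u_{\Delta x,x}\|_2+\|u_x\|_2\bigr),
\end{align*}
which tends to $0$ by Lemma~\ref{lem:convergence_ux} (using $\|u_{\Delta x,x}\|_2=\|u_x\|_2=\sqrt{F_{\mathrm{ac},\infty}}$ from its proof); hence $\|U_{\Delta x,\xi}\|_2\to\|U_\xi\|_2$.

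Next I establish weak $L^2$-convergence. From $\|U-U_{\Delta x}\|_\infty\to0$ (Lemma~\ref{lem:initial_conv_lagrangian}, \eqref{eq:initial_approx_U}) and integration by parts, $\int_\R U_{\Delta x,\xi}\phi\,d\xi\to\int_\R U_\xi\phi\,d\xi$ for every $\phi\in C_c^\infty(\R)$; together with the uniform bound $\|U_{\Delta x,\xi}\|_2^2\le\|u_{\Delta x,x}\|_2^2=F_{\mathrm{ac},\infty}$ and the density of $C_c^\infty(\R)$ in $L^2(\R)$, this gives $U_{\Delta x,\xi}\rightharpoonup U_\xi$ in $L^2(\R)$. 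Combined with the norm convergence, the Radon--Riesz theorem yields \eqref{eq:convergence_U_xi}. For $H$ one argues identically: integrating $H_\xi^2=H_\xi-U_\xi^2$ (from $U_\xi^2=y_\xi H_\xi=(1-H_\xi)H_\xi$) gives $\|H_\xi\|_2^2=\nu(\R)-\|U_\xi\|_2^2$ and $\|H_{\Delta x,\xi}\|_2^2=\nu(\R)-\|U_{\Delta x,\xi}\|_2^2$, so $\|H_{\Delta x,\xi}\|_2\to\|H_\xi\|_2$; weak convergence $H_{\Delta x,\xi}\rightharpoonup H_\xi$ follows from $\|y-y_{\Delta x}\|_\infty\to0$ (\eqref{eq:initial_approx_y}, with $H_\xi=-(y-\id)_\xi$) and the bound $\|H_{\Delta x,\xi}\|_2^2\le\|H_{\Delta x,\xi}\|_1=\nu(\R)$; Radon--Riesz then gives $H_{\Delta x,\xi}\to H_\xi$, hence $y_{\Delta x,\xi}\to y_\xi$, in $L^2(\R)$. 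Finally, to upgrade to $L^1$: since $H_{\Delta x,\xi},H_\xi\ge0$, $H_{\Delta x,\xi}\to H_\xi$ in $L^2(\R)$ (hence in measure on $\R$), and $\|H_{\Delta x,\xi}\|_1=\nu(\R)=\|H_\xi\|_1$, Scheffé's lemma yields $H_{\Delta x,\xi}\to H_\xi$ in $L^1(\R)$; equivalently one splits $\R=[-R,R]\cup\{|\xi|>R\}$, bounds the first piece by $\sqrt{2R}\,\|H_{\Delta x,\xi}-H_\xi\|_2$ and the second by $\int_{|\xi|>R}(H_{\Delta x,\xi}+H_\xi)\to2\int_{|\xi|>R}H_\xi$, then sends $R\to\infty$. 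This gives \eqref{eq:convergence_y_xi} and \eqref{eq:convergence_H_xi}.

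I expect the main obstacle to be the change-of-variables identity for $\|U_\xi\|_2^2$: one must carefully handle the degenerate set $\{y_\xi=0\}$, where the substitution $x=y(\xi)$ collapses, and invoke the chain rule $U_\xi=u_x(y)\,y_\xi$ for $u$ only in $E_2$; both are standard facts of the Lagrangian formulation of \cite{alphaHS,phdthesisNordli} but need to be stated with care. The remainder is a routine assembly of Radon--Riesz, Scheffé's lemma, and the already-established convergences of $u_{\Delta x,x}$, $u_{\Delta x}$, $y_{\Delta x}$, and $U_{\Delta x}$.
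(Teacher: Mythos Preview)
Your argument is correct. The paper itself defers the $L^2$-convergence of $U_{\Delta x,\xi}$ and $H_{\Delta x,\xi}$ to \cite[Sec.~5]{EquivalenceEulerLagrange}, so your Radon--Riesz proof (change of variables $\|U_\xi\|_2^2=\int u_x^2(1+u_x^2)^{-1}\,dx$ for the norms, integration by parts against $C_c^\infty$ for weak limits) is a self-contained replacement for that citation and is essentially the argument one finds there. The one place where the two proofs genuinely differ is the upgrade to $L^1$: you invoke Scheff\'e (or the $[-R,R]$ splitting), while the paper uses the same identity $H_\xi=H_\xi^2+U_\xi^2$ once more to write
\[
\|H_{\Delta x,\xi}-H_\xi\|_1 \le \|H_{\Delta x,\xi}^2-H_\xi^2\|_1+\|U_{\Delta x,\xi}^2-U_\xi^2\|_1
\]
and then factors via Cauchy--Schwarz. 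The paper's route is shorter and purely algebraic once $L^2$-convergence is in hand; yours is equally valid but relies on the additional observation $\|H_{\Delta x,\xi}\|_1=\nu(\R)=\|H_\xi\|_1$.
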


\begin{proof} For a proof of $H_{\Delta x, \xi} \rightarrow H_{\xi}$ and $U_{\Delta x, \xi} \rightarrow U_{\xi}$ in $L^2(\R)$ we refer to \cite[Sec. 5]{EquivalenceEulerLagrange}. 
As $X\in \F_0^0$ and, in particular, $U_{\xi}^2 = y_{\xi}H_{\xi}$, we have
\begin{equation}
    H_{\xi}^2 = H_{\xi} - U_{\xi}^2.
\label{eq:useful_relations}
\end{equation}
Since \eqref{eq:useful_relations} also holds for $X_{\Delta x}$, we obtain, using the Cauchy--Schwarz inequality, 
\begin{align*}
    \|H_{\Delta x, \xi} - H_{\xi}\|_1 & \leq \|H_{\Delta x, \xi}^2 - H_{\xi}^2 \|_1 + \|U_{\Delta x, \xi}^2 - U_{\xi}^2 \|_1 \\ & \leq \left( \|H_{\Delta x, \xi}\|_2 + \|H_{\xi}\|_2 \right) \|H_{\Delta x, \xi} - H_{\xi} \|_2 \\ & \qquad + \left(\|U_{\Delta x, \xi}\|_2 + \|U_{\xi}\|_2 \right)\|U_{\Delta x, \xi} - U_{\xi}\|_2,
\end{align*}
and $H_{\Delta x, \xi} \to H_\xi$ in $L^1(\R)$. Lastly, \eqref{eq:convergence_y_xi} follows from \eqref{eq:convergence_H_xi}, as $X_{\Delta x}, X \in \F_0^0$. 
\end{proof}

We proceed by introducing the stability function $g(X)$, which is the key element when showing convergence at later times in Lagrangian coordinates. In particular, it describes the loss of energy at wave breaking in a continuous way, in contrast to the actual energy density $V_{\xi}$, which drops abruptly at wave breaking.

Let 
\begin{equation*}
\R=\Omega_d(X) \cup \Omega_c(X),
\end{equation*}
 where 
\begin{align}\label{def.omegas}
    \Omega_d \left(X \right) :&= \left \{ \xi \in \R: U_{\xi}(\xi) < 0 \right\}, \notag 
    \\
    \Omega_c(X) :&= \{\xi \in \R: U_{\xi}(\xi) \geq 0 \}. 
\end{align}

Note that for each $\xi\in\Omega_c$ no wave breaking occurs in the future and hence $V_\xi(t,\xi)$ is continuous forward in time, while for $\xi \in \Omega_d$ wave breaking occurs in the future and hence $V_\xi(t,\xi)$ might be discontinuous forward in time.

The function $g(X)$ is then defined as  
\begin{align}
    g(X)(\xi) &:= \begin{cases}
   (1 - \alpha)V_{\xi}(\xi), & \xi \in \Omega_d(X), \\
     V_{\xi}(\xi), & \xi \in \Omega_c(X). \end{cases}
     \label{eq:stability_function_g}
\end{align}

Note that the Eulerian counterpart of $g(X)$ is the previously defined function $f(Z)$, cf. \eqref{eq:stability_function_eulerian}. The relation between the two functions is clarified in the following remark.

\begin{remark}
If $(u,\mu,\nu)\in \D_0$, $\nu$ is purely absolutely continuous, and $X=L((u,\mu,\nu))$, the functions \eqref{eq:stability_function_g} and \eqref{eq:stability_function_eulerian} are related through 
    \begin{align*}
        g(X)(\xi) = f(Z) (y(\xi))y_{\xi}(\xi), \text{ for a.e. } \xi \in \R.
    \end{align*}
\end{remark}

The following result is the final one concerning convergence of the initial data in Lagrangian coordinates.

\begin{prop}\label{prop:convergence_g_initially} 
Given $(u, \mu, \nu) \in \D_0$, let $X= (y, U, V, H) = L \left((u, \mu, \nu) \right)$ and $X_{\Delta x} = \left(y_{\Delta x}, U_{\Delta x}, V_{\Delta x}, H_{\Delta x} \right) = L \circ P_{\Delta x} \left((u, \mu, \nu ) \right)$, then as $\Dx \rightarrow 0$, 
\begin{align}
    g(X_{\Delta x}) &\rightarrow g(X) \text{ in } L^1(\R) \cap L^2(\R). \label{eq:convergence_g}
\end{align}
\end{prop}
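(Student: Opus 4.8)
The plan is to strip off the transport part of the difference, which Lemma~\ref{lem:Convergences_derivatives_without_rates} already controls, and to reduce everything to the dissipation part. Since $(u,\mu,\nu)\in\D_0$ we have $X,X_{\Delta x}\in\F_0^0$, so $V_\xi=H_\xi$ and $V_{\Delta x,\xi}=H_{\Delta x,\xi}$; hence \eqref{eq:convergence_H_xi} gives $V_{\Delta x,\xi}\to V_\xi$ in $L^1(\R)\cap L^2(\R)$, while \eqref{eq:convergence_U_xi} gives $U_{\Delta x,\xi}\to U_\xi$ in $L^2(\R)$. Writing $g(X)=V_\xi\bigl(1-\alpha\chi_{\Omega_d(X)}\bigr)$ and likewise for $X_{\Delta x}$,
\begin{equation*}
g(X)-g(X_{\Delta x})=(V_\xi-V_{\Delta x,\xi})-\alpha\,d_{\Delta x},\qquad d_{\Delta x}:=V_\xi\chi_{\Omega_d(X)}-V_{\Delta x,\xi}\chi_{\Omega_d(X_{\Delta x})},
\end{equation*}
and since the first term tends to $0$ in $L^1\cap L^2$ it suffices to show $d_{\Delta x}\to 0$ in $L^1\cap L^2$.

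For $d_{\Delta x}$ I would work along a subsequence and then invoke the principle that a family having, along every subsequence, a further subsequence with the same limit converges. So pass to a subsequence along which $U_{\Delta x,\xi}\to U_\xi$ and $V_{\Delta x,\xi}\to V_\xi$ a.e. On $\{U_\xi\neq 0\}$ the indicator $\chi_{\Omega_d(X_{\Delta x})}$ eventually agrees pointwise with $\chi_{\Omega_d(X)}$, so $d_{\Delta x}\to 0$ a.e.\ there. On $\{U_\xi=0\}$, Definition~\ref{def:DefinitionLagrangian}~\ref{def:DefinitionLagrangian:ImporantRel} and $X\in\F_0^0$ force $y_\xi=0$ (hence $V_\xi=1$) or $V_\xi=0$; on $\{U_\xi=0,\ V_\xi=0\}$ one has $0\le V_{\Delta x,\xi}\chi_{\Omega_d(X_{\Delta x})}\le V_{\Delta x,\xi}\to 0$, so again $d_{\Delta x}\to 0$ a.e. Thus $d_{\Delta x}\to 0$ a.e.\ on $\R\setminus\{y_\xi=0\}$, and since $|d_{\Delta x}|\le V_\xi+V_{\Delta x,\xi}\to 2V_\xi$ in $L^1$, a generalized dominated convergence argument (Fatou applied to the nonnegative function $V_\xi+V_{\Delta x,\xi}-|d_{\Delta x}|$ on $\R\setminus\{y_\xi=0\}$) gives $\|d_{\Delta x}\|_{L^1(\R\setminus\{y_\xi=0\})}\to 0$; for the $L^2$-part one uses in addition $0\le V_\xi,V_{\Delta x,\xi}\le 1$, so that $|d_{\Delta x}|^2\le 2(V_\xi+V_{\Delta x,\xi})$ and the same argument applies.

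It remains to handle $d_{\Delta x}$ on $\{y_\xi=0\}$. There $U_\xi=0$, so $\xi\in\Omega_c(X)$ and $V_\xi=H_\xi=1$; in particular $\{y_\xi=0\}\cap\Omega_d(X)=\emptyset$, whence $d_{\Delta x}=-V_{\Delta x,\xi}\chi_{\Omega_d(X_{\Delta x})}$ and
\begin{equation*}
\int_{\{y_\xi=0\}}|d_{\Delta x}|^p\,d\xi\le\bigl|\{y_\xi=0\}\cap\Omega_d(X_{\Delta x})\bigr|\qquad(p=1,2),
\end{equation*}
using $0\le V_{\Delta x,\xi}\le 1$. (Equivalently, on $\{y_\xi=0\}$ one has $|g(X)-g(X_{\Delta x})|\le y_{\Delta x,\xi}+\alpha\chi_{\Omega_d(X_{\Delta x})}$, and $\int_{\{y_\xi=0\}}y_{\Delta x,\xi}\,d\xi=\int_{\{y_\xi=0\}}(y_{\Delta x,\xi}-y_\xi)\,d\xi\to 0$ by \eqref{eq:convergence_y_xi}.) So the proposition reduces to proving $\bigl|\{y_\xi=0\}\cap\Omega_d(X_{\Delta x})\bigr|\to 0$ as $\Delta x\to 0$.

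This measure estimate is the main obstacle. Its mechanism is that the projection \eqref{eq:singular_F} redistributes the whole singular mass of $\mu$ onto the even gridpoints, which lie within $2\Delta x$ of $\operatorname{supp}(\mu_{\mathrm{sing}})$; thus $\mu_{\Delta x,\mathrm{sing}}$ is purely atomic with $\mu_{\Delta x,\mathrm{sing}}(\R)=\mu_{\mathrm{sing}}(\R)$, and $\{y_{\Delta x,\xi}=0\}$ is exactly the $y_{\Delta x}$-preimage of these atoms (on the absolutely continuous parts $y_{\Delta x,\xi}>0$, the projected slopes being finite). Since $\Omega_d(X_{\Delta x})\subseteq\{y_{\Delta x,\xi}>0\}$, we have $\{y_\xi=0\}\cap\Omega_d(X_{\Delta x})\subseteq\{y_\xi=0\}\setminus\{y_{\Delta x,\xi}=0\}$, so it is enough to show that the Lagrangian flat sets converge in measure, $\bigl|\{y_\xi=0\}\triangle\{y_{\Delta x,\xi}=0\}\bigr|\to 0$. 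I expect establishing this — by combining $\|y-y_{\Delta x}\|_\infty\le 2\Delta x$ (Lemma~\ref{lem:initial_conv_lagrangian}), $\|y_{\Delta x,\xi}-y_\xi\|_1\to 0$, and the explicit location of the redistributed atoms, and being most delicate when $\mu_{\mathrm{sing}}$ carries a continuous singular component — to be the technically hard step. With it in hand, the subsequence principle promotes the subsequential convergence of $g(X_{\Delta x})$ to the asserted convergence \eqref{eq:convergence_g}.
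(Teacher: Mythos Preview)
Your decomposition $g(X)-g(X_{\Delta x})=(V_\xi-V_{\Delta x,\xi})-\alpha d_{\Delta x}$ and your treatment of $d_{\Delta x}$ on $\{y_\xi>0\}$ via a.e.\ convergence along subsequences and generalized dominated convergence are sound. The gap is exactly where you locate it: the claim $|\mathcal{S}\cap\Omega_d(X_{\Delta x})|\to 0$ (or the stronger $|\mathcal{S}\triangle\mathcal{S}_{\Delta x}|\to 0$) does not follow from the tools you list. The $L^1$-convergence $y_{\Delta x,\xi}\to y_\xi$ only yields $\int_{\mathcal{S}_{\Delta x}\setminus\mathcal{S}}y_\xi\,d\xi\to 0$, not the measure of that set, because $y_\xi$ has no positive lower bound there. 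Worse, on $\mathcal{S}\cap\Omega_d(X_{\Delta x})$ one has $V_\xi=1$ and hence $|d_{\Delta x}|+|V_\xi-V_{\Delta x,\xi}|=V_{\Delta x,\xi}+(1-V_{\Delta x,\xi})=1$ pointwise, so the already-established convergence $V_{\Delta x,\xi}\to V_\xi$ in $L^1$ gives you no leverage whatsoever on this set. With a singular continuous component in $\mu$ the sets $\mathcal{S}$ and $\mathcal{S}_{\Delta x}$ can be geometrically quite different, and I do not see how the listed ingredients close this.

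The paper bypasses the issue completely by using the Radon--Riesz theorem for the $L^2$-part: it shows $\|g(X_{\Delta x})\|_2\to\|g(X)\|_2$ and $g(X_{\Delta x})\rightharpoonup g(X)$, the latter via pointwise convergence of the antiderivatives $\hat G_{\Delta x}(\xi)=\int_{-\infty}^{\xi}g(X_{\Delta x})\,d\eta$. The crucial structural input is the cell-by-cell matching of singular mass, $\mu_{\mathrm{sing}}((-\infty,x_{2j}))=\mu_{\Delta x,\mathrm{sing}}((-\infty,x_{2j}))$, which allows the singular contributions to cancel exactly across the gridpoints $\hat\xi_{3j}$ and reduces the comparison to the Eulerian estimate of Lemma~\ref{lem:convergence_g}. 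Once $L^2$-convergence is in hand, the $L^1$-statement follows from Cauchy--Schwarz on the finite-measure sets $\mathcal{S}$, $\mathcal{S}_{\Delta x}$, and $\{H_\xi\ge\tfrac12\}$, together with a change-of-variables argument on their complement. This route never needs to compare $\mathcal{S}$ and $\mathcal{S}_{\Delta x}$ as sets; it only uses that they have equal (finite) measure.
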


\begin{proof}
To begin with observe that for any $X\in \F_0^0$
\begin{equation*}
\|g(X)\|_2^2\leq \|V_\xi\|_2^2\leq \|V_\xi\|_1\leq F_\infty, 
\end{equation*}
since $0\leq V_\xi(\xi)=H_\xi(\xi)\leq 1$ for all $\xi\in \R$, which follows from \eqref{eq:L_eq3} and the fact that both $y$ and $H$ are increasing functions. We can therefore apply the Radon--Riesz theorem to establish $g(X_{\Delta x})\to g(X)$ in $L^2(\R)$. Accordingly, we split the proof of \eqref{eq:convergence_g} into three parts, \emph{i)} $L^2$-norm convergence, \emph{ii)} weak $L^2$-convergence, and \emph{iii)} $L^1$-convergence. \medskip 

\noindent \emph{i)} Verification of $\|g(X_{\Delta x}) \|_2 \rightarrow \|g(X) \|_{2}$. Note that since $X_{\Delta x}$ and $X$  belong to $\F_0^0$, we have
\begin{subequations}\label{eq:norm_estimate}
\begin{align}\nonumber
    \bigl | \|g(X) \|_{2}^2 &- \|g(X_{\Delta x})\|_2^2 \bigr | \\ &= \bigg|\int_{ \Omega_c(X)} H^2_{\xi}(\xi)d\xi + \int_{\Omega_d(X)} \left(1-\alpha \right)^2 H_{\xi}^2(\xi)d\xi  \nonumber \\ & \qquad - \int_{ \Omega_c(X_{\Delta x})}H_{\Delta x, \xi}^2(\xi)d\xi  - \int_{ \Omega_d(X_{\Delta x})} \left(1-\alpha \right)^2 H_{\Delta x, \xi}^2(\xi)d\xi \bigg| \nonumber \\ 
    & \leq \left| \int_{\R} H_{\xi}^2(\xi)d\xi - \int_{\R} H_{\Delta x, \xi}^2(\xi)d\xi \right| \nonumber \\ 
    & \qquad + \alpha(2-\alpha) \left| \int_{ \Omega_d(X)}H_{\xi}^2(\xi)d\xi - \int_{ \Omega_d(X_{\Delta x})}H_{\Delta x, \xi}^2(\xi)d\xi \right| \nonumber \\
    & \leq \left | \|H_{\xi} \|_2^2 - \|H_{\Delta x, \xi} \|_2^2 \right| \nonumber\\ 
    & \qquad + \alpha(2-\alpha) \left| \int_{ \Omega_d(X)}H_{\xi}(\xi)d\xi - \int_{ \Omega_d(X_{\Delta x})}H_{\Delta x, \xi}(\xi)d\xi \right| \label{eq:norm_estimate_2}\\ 
    & \qquad + \alpha(2-\alpha) \left| \int_{\Omega_d(X)}U_{\xi}^2(\xi)d\xi - \int_{\Omega_d(X_{\Delta x})}U_{\Delta x, \xi}^2(\xi)d\xi \right|, \label{eq:norm_estimate_3}
 \end{align}
 \end{subequations}
where we used \eqref{eq:useful_relations} in the last step.

To estimate \eqref{eq:norm_estimate_2} and \eqref{eq:norm_estimate_3}, introduce
\begin{align}
        \mathcal{S} &:= \{ \xi \in \R: y_{\xi}(\xi) =0 \}=\{\xi\in \R : V_\xi(\xi)=1\},\label{eq:wave_breaking_set_temp} 
    \end{align}
    and $B=y(\mathcal{S})$. Then it has been shown in the proof of \cite[Thm. 27]{alphaCH} that 
     \begin{equation}\label{relation:acs}
    \mu_{\mathrm{ac}}=\mu\vert_{B^c} \quad \text{ and } \quad \mu_{\mathrm{sing}}=\mu\vert_B,
    \end{equation}
    where $\mu\vert_B$ denotes the restriction of $\mu$ to $B$, that is $\mu\vert_B(E)=\mu(E\cap B)$ for any Borel set $E$. Thus
    \begin{equation*}
    \mathrm{meas}(\mathcal{S})= \mu_{\mathrm{sing}}(\R)=F_{\mathrm{sing},\infty}.
    \end{equation*}
    Along the same lines, by defining
     \begin{align}
         \mathcal{S}_{\Delta x} :&=\{\xi\in \R:y_{\Delta x, \xi}(\xi)=0\}= \{\xi \in \R: V_{\Delta x, \xi}(\xi) = 1 \} = \bigcup_{j \in \mathbb{Z}} \left[\hat\xi_{3j}, \hat\xi_{3j+1}\right], \label{eq:numerical_wave_breaking_set}
    \end{align}
    we have
    \begin{equation}\label{size:sd}
    \mathrm{meas}(\mathcal{S}_{\Delta x})= \mu_{\Delta x,\mathrm{sing}}(\R)=F_{\Delta x, \mathrm{sing}, \infty}=F_{\mathrm{sing},\infty}.
    \end{equation}
 Since $\Omega_d(X)=\Omega_d(X)\cap \mathcal{S}^c$ and $\Omega_d(X_{\Delta x})=\Omega_d(X_{\Delta x})\cap \mathcal{S}^c_{\Delta x}$, due to Definition~\ref{def:DefinitionLagrangian}~\ref{def:DefinitionLagrangian:ImporantRel}, \eqref{relation:acs} implies that 
 \begin{align}\label{hilfsabsch}
    \bigg| \int_{\Omega_d(X)}&H_{\xi}(\xi)d\xi - \int_{\Omega_d(X_{\Delta x})}H_{\Delta x, \xi}(\xi)d\xi \bigg| \nonumber \\ 
    &\hspace{-0.18cm} = \left|\int_{y\left(\Omega_d(X) \right)}u_x^2(z)dz - \int_{ y_{\Delta x}\left(\Omega_d(X_{\Delta x}) \right)}u_{\Delta x, x}^2(z)dz\right|. 
\end{align}
Furthermore, $U_{\xi}(\xi) = u_x (y(\xi))y_{\xi}(\xi)$ for almost every $\xi \in  \Omega_d(X)$ and likewise for $U_{\Delta x,\xi}$. Hence,  we can replace in the above integrals $y\left(\Omega_d(X) \right)$ and $y_{\Delta x}\left(\Omega_d(X_{\Delta x}) \right)$, by $\tilde \Omega_d(Z)$ and $\tilde \Omega_{d}(Z_{\Delta x})$, respectively, and end up with   
    \begin{align*}
     \bigg| \int_{ \Omega_d(X)}H_{\xi}(\xi)d\xi &- \int_{ \Omega_d(X_{\Delta x})}H_{\Delta x, \xi}(\xi)d\xi \bigg| \nonumber
     \\ 
     & \leq \int_{\tilde \Omega_d(Z) \cap \tilde \Omega_{c}(Z_{\Delta x})} u_x^2(z)dz + \int_{\tilde\Omega_c(Z) \cap \tilde \Omega_{d}(Z_{\Delta x})}u_{\Delta x, x}^2(z)dz  \\ 
     & \qquad + \int_{\tilde \Omega_d(Z) \cap \tilde \Omega_{d}(Z_{\Delta x})} \left| u_x^2(z) - u_{\Delta x, x}^2(z) \right|dz. 
     \end{align*}
The terms on the right hand side can be estimated using the same approach as in the proof of Lemma~\ref{lem:convergence_g}, which yields 
 \begin{align}
     \left| \int_{ \Omega_d(X)}H_{\xi}(\xi)d\xi - \int_{ \Omega_d(X_{\Delta x})}H_{\Delta x, \xi}(\xi)d\xi \right| \leq 4 \sqrt{F_{\mathrm{ac}, \infty}}\|u_x - u_{\Delta x, x} \|_2.
     \label{eq:estimate_H_xi_negative}
\end{align}  

For \eqref{eq:norm_estimate_3}, we get
\begin{subequations}
\begin{align}
    \bigg| \int_{\Omega_d(X)}U_{\xi}^2(\xi)d\xi &- \int_{\Omega_d(X_{\Delta x})}U_{\Delta x, \xi}^2(\xi)d\xi \bigg| \nonumber \\ &\leq \int_{\Omega_d(X) \cap \Omega_c(X_{\Delta x})} U_{\xi}^2(\xi)d\xi + \int_{\Omega_c(X) \cap \Omega_d(X_{\Delta x})} U_{\Delta x, \xi}^2(\xi)d\xi \label{eq:norm_estimate_3_2}\\ &\quad + \int_{\Omega_d(X) \cap \Omega_{d}(X_{\Delta x})} \left| U_{\xi}^2(\xi) - U_{\Delta x, \xi}^2(\xi) \right|d\xi. \label{eq:norm_estimate_3_3}
\end{align}
\end{subequations}
The two terms in \eqref{eq:norm_estimate_3_2} have a similar structure and we therefore only consider the first one. We have 
\begin{align*}
    \int_{\Omega_d(X) \cap \Omega_c(X_{\Delta x})} U_{\xi}^2(\xi)d\xi &\leq \int_{\Omega_d(X) \cap \Omega_c(X_{\Delta x})} U_{\xi}(\xi) \left(U_{\xi}(\xi) - U_{\Delta x, \xi}(\xi) \right)d\xi \\
    &\leq \left(\int_{\R} y_{\xi}V_{\xi}(\xi)d\xi \right)^{\frac{1}{2}} \|U_{\xi} - U_{\Delta x, \xi} \|_2 \\ 
    & \leq \left(\int_{\R} V_{\xi}(\xi)d\xi \right)^{\frac{1}{2}} \|U_{\xi} - U_{\Delta x, \xi} \|_2\\
    & = \sqrt{F_{\infty}}\|U_{\xi} - U_{\Delta x, \xi} \|_2, 
\end{align*}
since $U_{\xi}^2 = y_{\xi}V_{\xi}$ and $0 \leq y_{\xi} \leq 1$.
Estimating \eqref{eq:norm_estimate_3_3} in much the same way, yields 
\begin{equation}\label{eq:estimate_U_xi_negative}
 \bigg| \int_{\Omega_d(X)}U_{\xi}^2(\xi)d\xi - \int_{\Omega_d(X_{\Delta x})}U_{\Delta x, \xi}^2(\xi)d\xi \bigg| \leq 4\sqrt{F_\infty}\|U_{\xi} - U_{\Delta x, \xi} \|_2.
 \end{equation}
 
Finally, combining \eqref{eq:norm_estimate}, \eqref{eq:estimate_H_xi_negative} and \eqref{eq:estimate_U_xi_negative}, we have shown that 
\begin{align*}
    \left| \|g(X) \|_{2}^2 - \|g(X_{\Delta x})\|_2^2 \right| &\leq \left| \|H_{\xi} \|_2^2 - \|H_{\Delta x, \xi} \|_2^2 \right| + 4\alpha (2-\alpha)\sqrt{F_{\mathrm{ac}, \infty}} \|u_x - u_{\Delta x, x} \|_2 \\ & \quad + 4\alpha (2-\alpha)\sqrt{F_{\infty}} \|U_{\xi} - U_{\Delta x, \xi} \|_2. 
\end{align*}
As $H_{\Delta x, \xi} \rightarrow H_{\xi}$ and $U_{\Delta x, \xi} \rightarrow U_{\xi}$ in $L^2(\R)$ by Lemma \ref{lem:Convergences_derivatives_without_rates} and $u_{\Delta x, x} \rightarrow u_{x}$ in $L^2(\R)$ by Lemma \ref{lem:convergence_ux}, it holds that $\|g(X_{\Delta x})\|_2 \rightarrow \|g(X) \|_2$. \medskip
   
\noindent \emph{ii)} We show that $g(X_{\Delta x}) \rightharpoonup g(X)$ in $L^2(\R)$. To that end, we interpret $g(X_{\Delta x})$ and $g(X)$ as positive Radon measures with the associated functions
\begin{align*}
    \hat{G}(\xi) = \int_{-\infty}^{\xi} g(X)(\eta)d\eta \quad \text{and} \quad
    \hat{G}_{\Delta x}(\xi) = \int_{-\infty}^{\xi}g(X_{\Delta x})(\eta)d\eta. 
\end{align*}
If we show that $\hat{G}_{\Delta x} \rightarrow \hat{G}$ pointwise, then \cite[Prop. 7.19]{RealAnalysisFolland} implies that $g(X_{\Delta x}) \rightarrow g(X)$ vaguely, and as a consequence, $g(X_{\Delta x}) \rightharpoonup g(X)$ in $L^2(\R)$. 

Let $\xi \in [\hat{\xi}_{3j}, \hat{\xi}_{3j+3})$, and note that due to Definition~\ref{def:ProjectionOperator}, Definition~\ref{def:MappingL}, \eqref{eq:jump_point}--\eqref{eq:LagrangianGridpoints}, and \eqref{relation:acs},
\begin{align*}
\mu_{\mathrm{sing}}((-\infty,x_{2j}))&=\int_{(-\infty, \hat\xi_{3j}]\cap \mathcal{S}}V_\xi(\eta) d\eta = \int_{(-\infty, \hat\xi_{3j}]\cap \mathcal{S}_{\Delta x}} V_{\Delta x, \xi}(\eta) d\eta\\ & =\mu_{\Delta x,\mathrm{sing}}((-\infty, x_{2j})).
\end{align*}
Thus, 
\begin{align*}
    \bigl| \hat{G}(\xi) - \hat{G}_{\Delta x}(\xi) \bigr|& \leq\left| \int_{(-\infty, \hat\xi_{3j}]\cap \mathcal{S}^c} g(X) (\eta) d\eta- \int_{(-\infty, \hat \xi_{3j}]\cap \mathcal{S}_{\Delta x}^c}g(X_{\Delta x}) (\eta) d\eta\right|\\
    & \quad + \left| \int_{\hat \xi_{3j}}^\xi (g(X)-g(X_{\Delta x}))(\eta) d\eta\right|\\
    & = I_1+I_2.
    \end{align*} 
  For $I_1$ note that after using the same argument as the one leading to \eqref{hilfsabsch}, we obtain  
  \begin{equation*}
   \int_{(-\infty, \hat\xi_{3j}]\cap \mathcal{S}^c} g(X) (\eta) d\eta- \int_{(-\infty, \hat \xi_{3j}]\cap \mathcal{S}_{\Delta x}^c}g(X_{\Delta x}) (\eta) d\eta
   = \int_{-\infty}^{x_{2j}} (f(Z)-f(Z_{\Delta x})) (x) dx,
   \end{equation*}
   and due to Lemma~\ref{lem:convergence_g},
   \begin{equation*}
   \vert I_1\vert \leq \| f(Z)-f(Z_{\Delta x})\|_1\leq 8\sqrt{F_{\mathrm{ac}, \infty}} \|u_x-u_{\Delta x,x}\|_2.
   \end{equation*}
The term $I_2$ can be estimated as follows,  
\begin{align*}
    |I_2| &\leq \left| \int_{(\hat{\xi}_{3j}, \xi]}H_{\xi}(\eta)d\eta - \int_{(\hat{\xi}_{3j}, \xi]}H_{\Delta x, \xi}(\eta)d\eta \right| \\ 
    & \qquad + \alpha \left| \int_{(\hat{\xi}_{3j}, \xi] \cap \Omega_d(X)}H_{\xi}(\eta)d\eta - \int_{(\hat{\xi}_{3j}, \xi] \cap \Omega_d(X_{\Delta x})}H_{\Delta x, \xi}(\eta)d\eta \right|.
\end{align*}
The first term is bounded from above by $\|H - H_{\Delta x} \|_{\infty} \leq 2\Delta x$, due to \eqref{eq:initial_approx_H}. For the second term, let $x= y(\xi)$ and $x_{\Delta x}=y_{\Delta x}(\xi)$, which both belong to $[x_{2j}, x_{2j+2}]$. Using once more the same argument as the one leading to \eqref{hilfsabsch}, $\nu_{\textrm{ac}}=u_x^2$, and $\nu_{\Delta x, \textrm{ac}}=u_{\Delta x, x}^2$, we end up with  
\begin{align*}
      \alpha \biggl| \int_{(\hat{\xi}_{3j}, \xi] \cap \Omega_d(X)}H_{\xi}(\eta)d\eta &- \int_{(\hat{\xi}_{3j},\xi] \cap \Omega_d(X_{\Delta x})}H_{\Delta x, \xi}(\eta)d\eta \biggr| \nonumber \\ 
      & \leq \alpha \left( \int_{x_{2j}}^xu_x^2(z)dz + \int_{x_{2j}}^{x_{\Delta x}}u_x^2(z)dz \right)\\
      & \leq 2(F_{\textrm{ac}}(x_{2j+2})- F_{\textrm{ac}}(x_{2}))\\
      & \leq 2(F_{\textrm{ac}}(x+2\Delta x)-F_{\textrm{ac}}(x-2\Delta x)), 
\end{align*}
and therefore 
\begin{equation*}
|I_2|\leq 2\Delta x+ 2(F_{\textrm{ac}}(x+2\Delta x)-F_{\textrm{ac}}(x-2\Delta x)).
\end{equation*}
Thus, 
\begin{align}\nonumber
    |\hat{G}(\xi) - \hat{G}_{\Delta x}(\xi)|& \leq 2 \Delta x + 8 \sqrt{F_{\text{ac}, \infty}}\|u_x - u_{\Delta x, x} \|_2\\
    & \quad  + 2(F_{\textrm{ac}}(x+2\Delta x)-F_{\textrm{ac}}(x-2\Delta x)). 
    \label{eq:estimate_hat_G}
\end{align}
By Lemma \ref{lem:convergence_ux}, $u_{\Delta x, x} \rightarrow u_x$ in $L^2(\R)$, and, since $F_{\textrm{ac}}$ is continuous, also the third term tends to zero as $\Delta x \rightarrow 0$. Therefore, \eqref{eq:estimate_hat_G} implies that $\hat{G}_{\Delta x} \rightarrow \hat{G}$ pointwise, which gives, see \cite[Prop. 7.19]{RealAnalysisFolland}, that 
\begin{equation}\label{vaguetol2}
\lim_{\Delta x\to 0} \int_{\R} g(X_{\Delta x})\phi(\eta) d\eta = \int_{\R} g(X) \phi(\eta) d\eta, \quad \text{ for all } \phi\in C_c^\infty(\R).
\end{equation}
Since $g(X)$ and $g(X_{\Delta x})$ belong to $L^2(\R)$ and $C_c^\infty(\R)$ is a dense subset of $L^2(\R)$, \eqref{vaguetol2} holds for all $\phi\in L^2(\R)$ and $g(X_{\Delta x})\rightharpoonup g(X)$ in $L^2(\R)$.

Consequently, combining \emph{i)} and \emph{ii)}, the conditions of the Radon--Riesz theorem are met and $g(X_{\Delta x})\to g(X)$ in $L^2(\R)$. \medskip

\noindent \emph{iii)} It is left to show that $g(X_{\Delta x}) \rightarrow g(X)$ in $L^1(\R)$. Again we will use a splitting based on the sets $\mathcal{S}$ and $\mathcal{S}_{\Delta x}$ defined in \eqref{eq:wave_breaking_set_temp} and \eqref{eq:numerical_wave_breaking_set}, respectively.  

Since $g(X_{\Delta x})(\xi) = g(X) (\xi)= 1$ for a.e. $\xi\in \mathcal{S}\cap \mathcal{S}_{\Delta x}$, we have 
\begin{subequations}\label{eq:convergence_gg}
\begin{align}
    \|g(X) - g(X_{\Delta x}) \|_1 &= \int_{\mathcal{S} \cap \mathcal{S}_{\Delta x}^c} \left|g(X)(\xi)- g(X_{\Delta x})(\xi) \right|d\xi \nonumber\\ & \qquad +  \int_{\mathcal{S}^c \cap \mathcal{S}_{\Delta x}} \left|g(X)(\xi) - g(X_{\Delta x})(\xi) \right|d\xi \label{eq:convergence_g_2}
    \\ & \qquad +  \int_{\mathcal{S}^c \cap \mathcal{S}_{\Delta x}^c} \left|g(X)(\xi) - g(X_{\Delta x})(\xi) \right|d\xi. \label{eq:convergence_g_3}
\end{align}
\end{subequations}
By the Cauchy--Schwarz inequality and \eqref{size:sd}, \eqref{eq:convergence_g_2} satisfies
\begin{align}\label{eq:convergence_ggg}
    \int_{\mathcal{S}^c \cap \mathcal{S}_{\Delta x}} \left|g(X)(\xi) - g(X_{\Delta x})(\xi) \right|d\xi \nonumber & \leq  \sqrt{\mathrm{meas}\! \left( \mathcal{S}_{\Delta x} \right)}\|g(X) - g(X_{\Delta x}) \|_2 \\ &\leq \sqrt{F_{\infty}}\|g(X) - g(X_{\Delta x}) \|_2,
\end{align}
and we treat the first term in \eqref{eq:convergence_gg} the same way. 

The term \eqref{eq:convergence_g_3}, on the other hand, requires a bit more work. Introduce the set $A = \{\xi : H_{\xi}(\xi) \geq \frac{1}{2}\}$, which satisfies, due to Chebyshev's inequality with $p=1$,
\begin{equation}\label{measA}
    \mathrm{meas}(A) \leq 2 \|H_{\xi}\|_1 = 2 G_{\infty}. 
\end{equation}
Furthermore, let $E_{\Delta x} = \mathcal{S}^c \cap \mathcal{S}_{\Delta x}^c$, so that \eqref{eq:convergence_g_3} can be written as

\begin{align}
 \int_{E_{\Delta x}} \left|g(X)(\xi) - g(X_{\Delta x})(\xi) \right|d\xi& =\!\int_{E_{\Delta x}\cap A} \left|g(X)(\xi) - g(X_{\Delta x})(\xi) \right|d\xi \notag \\ 
 & \quad +\int_{E_{\Delta x}\cap A^c} \left|g(X)(\xi) - g(X_{\Delta x})(\xi) \right|d\xi \label{eq:convergence_gh}.
 \end{align}
 For \eqref{eq:convergence_gh}, the Cauchy--Schwarz inequality and \eqref{measA} imply
 \begin{equation}\label{eq:convergence_ggggg}
 \int_{E_{\Delta x}\cap A} \left|g(X)(\xi) - g(X_{\Delta x})(\xi) \right|d\xi\leq \!\sqrt{2G_\infty} \|g(X)-g(X_{\Delta x})\|_2.
 \end{equation}
 
 Combining \eqref{eq:convergence_gg}, \eqref{eq:convergence_ggg} \eqref{eq:convergence_gh}, and \eqref{eq:convergence_ggggg} and recalling that $g(X_{\Delta x})\to g(X) $ in $L^2(\R)$, it remains to show that \eqref{eq:convergence_gh} tends to zero as $\Delta x\to 0$. We follow the proof of \cite[Lem. 7.3]{GlobalDissipative2CH} closely. For almost every $\xi \in E_{\Delta x}$, we have 
\begin{equation}
    y_{\xi}(\xi) = \frac{1}{1 + u_{x}^2 (y(\xi))} \quad \text{ and }\quad  y_{\Delta x, \xi}(\xi) = \frac{1}{1 + u_{\Delta x, x}^2 (y_{\Delta x}(\xi))},
\label{eq:explicit_representatives_characteristics}
\end{equation}
and
\begin{equation}\label{splittingg}
 g(X)(\xi) - g(X_{\Delta x})(\xi) = f(Z) (y(\xi)) y_{\xi}(\xi) - f(Z_{\Delta x}) (y_{\Delta x}(\xi)) y_{\Delta x, \xi} (\xi).
 \end{equation}
 Thus combining \eqref{eq:explicit_representatives_characteristics} and \eqref{splittingg}, 
 \begin{subequations}
\begin{align}
    g(X) - g(X_{\Delta x})
        &= \left( f(Z) (y) - f(Z) (y_{\Delta x}) \right) y_{\xi} y_{\Delta x, \xi} \label{eq:first_term_g_1}\\ 
       & \qquad + \left( f(Z) (y_{\Delta x}) - f(Z_{\Delta x})( y_{\Delta x}) \right) y_{\xi} y_{\Delta x, \xi} \label{eq:first_term_g_2}\\
    &\qquad + f(Z) (y) \left(u_{\Delta x, x}^2 (y_{\Delta x}) - u_{ x}^2(y) \right)y_{\xi} y_{\Delta x, \xi} \label{eq:second_part_g}\\ 
    & \qquad + u_{x}^2 (y) \left(f(Z) (y) - f(Z_{\Delta x}) (y_{\Delta x}) \right) y_{\xi} y_{\Delta x, \xi}\label{eq:third_part_g}.
\end{align}
\end{subequations}

Concerning the integral of \eqref{eq:first_term_g_1}, note that $f(Z) \in L^1(\R)$ by Lemma \ref{lem:convergence_g}. Thus, given $\epsilon > 0$ there exists $\psi \in C_c(\R)$ such that $\|f(Z) - \psi \|_{L^1(\R)} \leq \epsilon $, since $C_c(\R)$ is dense in $L^1(\R)$, and  
\begin{align}
   \int_{E_{\Delta x}\cap A^c} \left|f(Z) (y) - f(Z) (y_{\Delta x}) \right| y_{\xi}y_{\Delta x, \xi} d\xi \nonumber  &\leq \int_{\R} \left|f(Z) (y) - \psi (y) \right|y_{\Delta x, \xi}y_{\xi} d\xi \nonumber \\ 
   & \quad + \int_{\R} \left| \psi (y) - \psi (y_{\Delta x}) \right|y_{\Delta x, \xi}y_{\xi}d\xi \nonumber \\
   & \quad + \int_{\R} \left|\psi (y_{\Delta x}) - f(Z) (y_{\Delta x}) \right|y_{\Delta x, \xi}y_{\xi}d\xi \nonumber  \\ 
   & \leq 2 \epsilon + \int_{\R} \left| \psi (y) - \psi (y_{\Delta x}) \right|y_{\Delta x, \xi}y_{\xi}d\xi,
   \label{eq:density_argument_g}
\end{align}
since $0\leq y_\xi$, $y_{\Delta x,\xi}\leq 1$. 
As $y_{\Delta x} \rightarrow y$ in $L^{\infty}(\R)$ by \eqref{eq:initial_approx_y}, the support of $\psi (y_{\Delta x})$ is contained inside some compact set which can be chosen independently of $\Delta x$. Therefore, by the Lebesgue dominated convergence theorem, $\psi(y_{\Delta x}) \rightarrow \psi (y)$ in $L^1(\R)$. Consequently, the left hand side of \eqref{eq:density_argument_g} vanishes as $\epsilon \rightarrow 0$ and $\Delta x \rightarrow 0$.

For the term \eqref{eq:first_term_g_2} we use the change of variables $x=y_{\Delta x}(\xi)$, $0 \leq y_{\xi} \leq 1$, and Lemma \ref{lem:convergence_g} to deduce that 
\begin{align*}
    \int_{E_{\Delta x} \cap A^c} |f(Z) (&y_{\Delta x}) - f(Z_{\Delta x}) (y_{\Delta x}) |y_{\xi}y_{\Delta x, \xi} d\xi \\ 
    &\leq \int_{y_{\Delta x}(E_{\Delta x} \cap A^c)} \left|f(Z) - f(Z_{\Delta x})\right|dx \leq 8 \sqrt{F_{\mathrm{ac}, \infty}} \|u_{x} - u_{\Delta x, x} \|_{2}. 
\end{align*}

For \eqref{eq:second_part_g} note that  
\begin{equation*}
f(Z)(y(\xi))y_\xi(\xi)=g(X)(\xi)\leq V_\xi(\xi)=H_\xi(\xi)\leq y_\xi(\xi)  \text{ for a.e. } \xi\in E_{\Delta x} \cap A^c,
\end{equation*}
which implies 
\begin{align*}
    \int_{E_{\Delta x} \cap A^c} \left| u_{\Delta x, x}^2 ( y_{\Delta x}) - u_x^2 (y) \right|& f(Z) (y) y_{\xi}y_{\Delta x, \xi}d\xi \\ &\leq \int_{E_{\Delta x} \cap A^c} \left|u_{\Delta x,x}^2 (y_{\Delta x}) - u_x^2 (y) \right|y_{\xi}y_{\Delta x, \xi}d\xi .
\end{align*}
Since  $u_x^2 \in L^1(\R)$, an argument similar to the one for the integrals of \eqref{eq:first_term_g_1} and \eqref{eq:first_term_g_2} shows that the term on the left hand side tends to zero as $\Delta x\to 0$. 

 Finally, for \eqref{eq:third_part_g}, we observe that 
 \begin{equation*}
u_x^2(y(\xi))y_\xi= V_\xi(\xi)=H_\xi(\xi)\leq y_\xi(\xi)  \text{ for a.e. } \xi\in E_{\Delta x} \cap A^c,
\end{equation*}
and hence \eqref{eq:third_part_g} is bounded from above by 
\begin{equation*}
    \int_{E_{\Delta x} \cap A^c} \left|f(Z)(y) - f(Z_{\Delta x}) (y_{\Delta x}) \right|y_{\Delta x,\xi}y_{\xi}d\eta,
\end{equation*}
which is a combination of the integrals of \eqref{eq:first_term_g_1} and \eqref{eq:first_term_g_2} which tend to zero as $\Dx \rightarrow 0$.  This finishes the proof of $g(X_{\Delta x}) \to g(X)$ in $L^1(\R)$.
\end{proof}

To establish convergence at later times in Lagrangian coordinates, we equip the set $\F$ with the following metric, which has been introduced in \cite[Def. 4.6]{alphaHS}. 
\begin{definition}
    Let $d: \F \times \F \rightarrow [0, \infty)$ be defined by 
    \begin{align*}
        d(X, \hat{X}) &= \|y - \hat{y} \|_{\infty} + \|U - \hat{U} \|_{\infty} + \|H_{\xi} - \hat{H}_{\xi}\|_1 + \|y_{\xi} - \hat{y}_{\xi}\|_2 \\ & \qquad + \|U_{\xi} - \hat{U}_{\xi} \|_2 + \|g(X) + y_{\xi}- g(\hat{X}) - \hat{y}_{\xi} \|_2 + \|H_{\xi} - \hat{H}_{\xi} \|_2.
    \end{align*}
\end{definition}
This metric has one major drawback: it separates solutions in Lagrangian coordinates belonging to the same equivalence class. Nevertheless, it is well suited for us, since we are only interested in comparing $X_{\Delta x}(t) = S_t (X_{\Delta x}(0))$ with $X(t) = S_t (X(0))$ and not their respective equivalence classes.

\begin{theorem}\label{thm:convergence_lagrangian_later}
 Given $(u_0, \mu_0, \nu_0) \in \D_0$ and $t\geq 0$, let $X(t) = S_t \circ L \left( (u_0, \mu_0, \nu_0 )\right)$ and $X_{\Delta x}(t) = S_{t} \circ L \circ P_{\Delta x} \left( (u_0, \mu_0, \nu_0) \right)$, then as $\Dx \rightarrow 0$, 
 \begin{subequations}
 \begin{align}
     y_{\Delta x}(t) & \rightarrow y(t) \text{ in } L^{\infty}(\R), \label{eq:convergence_y_later}\\
     U_{\Delta x}(t) & \rightarrow U(t) \text{ in } L^{\infty}(\R), \label{eq:convergence_U_later}\\
     y_{\Delta x, \xi}(t) & \rightarrow y_{\xi}(t) \text{ in } L^2(\R), \label{eq:convergence_y_xi_later} \\
     U_{\Delta x, \xi}(t) &\rightarrow U_{\xi}(t) \text{ in } L^2(\R), \label{eq:convergence_U_xi_later} \\
     H_{\Delta x, \xi}(t) & \rightarrow H_{\xi}(t) \text{ in } L^1(\R)\cap L^2(\R), \label{eq:convergence_H_xi_later}\\ 
     g(X_{\Delta x})(t) & \rightarrow g(X)(t) \text{ in } L^1(\R) \cap L^2(\R). \label{eq:convergence_g_later_1} 
      \end{align}
 \end{subequations}
 Furthermore,
 \begin{subequations}
 \begin{align}
 	\|H_{\Delta x}(t) - H(t) \|_{\infty}&\leq 2\Delta x,  \label{eq:convergence_H_later} \\
     \int_0^t\|V_{\Delta x, \xi}(s) - V_{\xi}(s)\|_{L^1(\R)}\,ds &\leq (1+\alpha)t\|H_{\xi}(0) - H_{\Delta x, \xi}(0) \|_1 \nonumber \\ & \quad + t\|g(X(0)) - g(X_{\Delta x}(0)) \|_1 \nonumber  \\ & \quad + 2\alpha \sqrt{2 \left(1 + \frac{1}{4}t^2 \right)G_{\infty}(0)} \|U_{\xi}(0) - U_{\Delta x, \xi}(0) \|_2.  \label{eq:convergence_V_xi_later} 
 \end{align}
 \end{subequations}
\end{theorem}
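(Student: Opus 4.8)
The argument divides into the qualitative convergences \eqref{eq:convergence_y_later}--\eqref{eq:convergence_H_later}, which I would deduce from stability of the solution operator in the metric $d$, and the quantitative estimate \eqref{eq:convergence_V_xi_later}, which I would obtain directly from the explicit formula \eqref{eq:integrated_ODE3}.

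\emph{Qualitative convergence.} First I would observe that $d(X_{\Delta x}(0),X(0))\to 0$ as $\Delta x\to 0$: every term of $d$ is controlled by Lemma~\ref{lem:initial_conv_lagrangian} (the $\|y-y_{\Delta x}\|_\infty$ and $\|U-U_{\Delta x}\|_\infty$ terms), Lemma~\ref{lem:Convergences_derivatives_without_rates} (the $\|y_\xi-y_{\Delta x,\xi}\|_2$, $\|U_\xi-U_{\Delta x,\xi}\|_2$ and $\|H_\xi-H_{\Delta x,\xi}\|_{1\cap 2}$ terms), and Proposition~\ref{prop:convergence_g_initially} together with the triangle inequality for the term $\|g(X)+y_\xi-g(X_{\Delta x})-y_{\Delta x,\xi}\|_2$. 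Since the projection preserves the total energy, $F_{\Delta x,\infty}=F_\infty$ and $G_{\Delta x,\infty}=G_\infty$, so $\{X_{\Delta x}(0)\}_{\Delta x>0}$ stays in a fixed bounded subset of $(\F,d)$, and the Lipschitz continuity of $S_t$ on bounded subsets of $(\F,d)$ from \cite{alphaHS} gives $d(X_{\Delta x}(t),X(t))\le C(t)\,d(X_{\Delta x}(0),X(0))\to 0$ for every fixed $t\ge 0$. Reading off the terms of $d$ yields \eqref{eq:convergence_y_later}, \eqref{eq:convergence_U_later}, \eqref{eq:convergence_y_xi_later}, \eqref{eq:convergence_U_xi_later}, the $L^2$-part of \eqref{eq:convergence_H_xi_later}, and (after subtracting $\|y_\xi(t)-y_{\Delta x,\xi}(t)\|_2\to 0$) the $L^2$-part of \eqref{eq:convergence_g_later_1}. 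The remaining pieces are handled by time-invariance: since $H_t=0$, the quantities $H$ and $H_{\Delta x}$ do not change, so \eqref{eq:convergence_H_later} is \eqref{eq:initial_approx_H} and the $L^1$-part of \eqref{eq:convergence_H_xi_later} is Lemma~\ref{lem:Convergences_derivatives_without_rates}; and since $g$ is invariant under the flow, $g(S_tX)=g(X)$ — a characteristic with $U_\xi(0,\xi)<0$ lies in $\Omega_d$ exactly for $s<\tau(\xi)$ and $V_\xi$ drops there by precisely the factor $1-\alpha$, see \cite{alphaHS} — we get $g(X(t))=g(X(0))$ and $g(X_{\Delta x}(t))=g(X_{\Delta x}(0))$, so the $L^1$-part of \eqref{eq:convergence_g_later_1} is Proposition~\ref{prop:convergence_g_initially}.

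\emph{The estimate \eqref{eq:convergence_V_xi_later}.} By \eqref{eq:integrated_ODE3}, $V_\xi(s,\xi)=V_\xi(0,\xi)\bigl(1-\alpha\chi_{\{s\ge\tau(\xi)>0\}}(\xi)\bigr)$ and similarly for $V_{\Delta x,\xi}$, with $V_\xi(0)=H_\xi(0)$, $V_{\Delta x,\xi}(0)=H_{\Delta x,\xi}(0)$ on $\F_0^0$. I would split $\R$ into the set $\mathcal W_t$ of $\xi$ at which wave breaking occurs for $X$ or $X_{\Delta x}$ within $[0,t]$, and its complement. On $\mathcal W_t^c$ one has $\chi_{\{s\ge\tau(\xi)>0\}}(\xi)=\chi_{\{s\ge\tau_{\Delta x}(\xi)>0\}}(\xi)=0$ for all $s\in[0,t]$, hence $V_{\Delta x,\xi}(s,\xi)-V_\xi(s,\xi)=H_{\Delta x,\xi}(0,\xi)-H_\xi(0,\xi)$ there, contributing at most $t\|H_\xi(0)-H_{\Delta x,\xi}(0)\|_1$ to $\int_0^t\|V_{\Delta x,\xi}(s)-V_\xi(s)\|_1\,ds$. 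On $\mathcal W_t$ I would use the identity from the invariance of $g$, namely $V_\xi(s)=g(X(0))+\alpha V_\xi(0)\chi_{\Omega_d(X(s))}$ (and its $\Delta x$-analogue), which gives
\[
 V_{\Delta x,\xi}(s)-V_\xi(s)=\bigl[g(X_{\Delta x}(0))-g(X(0))\bigr]+\alpha\bigl(H_{\Delta x,\xi}(0)-H_\xi(0)\bigr)\chi_{\Omega_d(X_{\Delta x}(s))}+\alpha H_\xi(0)\bigl(\chi_{\Omega_d(X_{\Delta x}(s))}-\chi_{\Omega_d(X(s))}\bigr),
\]
so after $L^1$-norms and integration the first two groups produce $t\|g(X(0))-g(X_{\Delta x}(0))\|_1+\alpha t\|H_\xi(0)-H_{\Delta x,\xi}(0)\|_1$. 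For the last group, using $\xi\in\Omega_d(X(s))\iff U_\xi(0,\xi)<0$ and $s<\tau(\xi)$, Tonelli's theorem to do the $s$-integral first, and the elementary bound $\int_0^t|\chi_{\{s\ge a\}}-\chi_{\{s\ge b\}}|\,ds\le\min(t,|a-b|)$, the wave-breaking time $\tau(\xi)=-2U_\xi(0,\xi)/V_\xi(0,\xi)$ together with \eqref{eq:useful_relations} (so that $\tau(\xi)\le t\iff H_\xi(0,\xi)\ge(1+\tfrac14 t^2)^{-1}$) identifies $\mathcal W_t$ with a subset of $\{H_\xi(0)\ge(1+\tfrac14 t^2)^{-1}\}\cup\{H_{\Delta x,\xi}(0)\ge(1+\tfrac14 t^2)^{-1}\}$, of Lebesgue measure at most $2(1+\tfrac14 t^2)G_\infty(0)$ by Chebyshev's inequality; a Cauchy--Schwarz step against $\|U_\xi(0)-U_{\Delta x,\xi}(0)\|_2$ then produces the factor $2\alpha\sqrt{2(1+\tfrac14 t^2)G_\infty(0)}$, and the three contributions add up to the right-hand side of \eqref{eq:convergence_V_xi_later}.

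\emph{Main obstacle.} Everything in the first paragraph is a soft consequence of the stability of $S_t$ in $(\F,d)$ and the invariance of $g$ and $H$. The genuine difficulty is the last group in the estimate: the sets $\Omega_d(X(s))$ and $\Omega_d(X_{\Delta x}(s))$ are the sign sets of $U_\xi(s,\cdot)$ and $U_{\Delta x,\xi}(s,\cdot)$, which are only $L^2$-close, so $\Omega_d(X(s))\triangle\Omega_d(X_{\Delta x}(s))$ need not be small in measure; the point is that, weighed by $H_\xi(0,\cdot)$ and integrated in time over $[0,t]$, the mismatch between the wave-breaking times $\tau$ and $\tau_{\Delta x}$ is controlled, via the explicit formula for $\tau$, the relation \eqref{eq:useful_relations}, and the localization to the finite-measure set $\{H_\xi(0)\ge(1+\tfrac14 t^2)^{-1}\}$, by $\|U_\xi(0)-U_{\Delta x,\xi}(0)\|_2$ and $\|H_\xi(0)-H_{\Delta x,\xi}(0)\|_1$. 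Carrying out this case analysis with the correct constants is where the work lies.
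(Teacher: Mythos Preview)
Your proposal is correct and, for the qualitative part \eqref{eq:convergence_y_later}--\eqref{eq:convergence_H_later}, identical to the paper: both combine Lemmas~\ref{lem:initial_conv_lagrangian}, \ref{lem:Convergences_derivatives_without_rates} and Proposition~\ref{prop:convergence_g_initially} to obtain $d(X(0),X_{\Delta x}(0))\to 0$, invoke the Lipschitz stability of $S_t$ in the metric $d$ from \cite{alphaHS}, and then use the time-invariance of $H$ and of $g$ for the remaining $L^1$ pieces and for \eqref{eq:convergence_H_later}.

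For \eqref{eq:convergence_V_xi_later} the routes differ in organization, though the core ingredient is the same. The paper performs a direct four-fold case split according to $\Omega_n(X(0))\cap\Omega_m(X_{\Delta x}(0))$, $n,m\in\{c,d\}$, further refined by the state at time $t$; on the delicate subcases (e.g.\ $\Omega_{d,d}(0)\cap\Omega_{c,d}(t)$) it uses the ODE $U_{t\xi}=\tfrac12 V_\xi$ to rewrite $\alpha\int_{\tau(\xi)}^{t}V_{\Delta x,\xi}(0,\xi)\,ds=2\alpha\bigl(U_{\Delta x,\xi}(t,\xi)-U_{\Delta x,\xi}(\tau(\xi),\xi)\bigr)$, then exploits $U_{\Delta x,\xi}(t,\xi)<0=U_\xi(\tau(\xi),\xi)$ to bound this by $2\alpha|U_\xi(0,\xi)-U_{\Delta x,\xi}(0,\xi)|+\alpha\tau(\xi)|H_\xi(0,\xi)-H_{\Delta x,\xi}(0,\xi)|$. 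You instead use the invariance identity $V_\xi(s)=g(X(0))+\alpha H_\xi(0)\chi_{\Omega_d(X(s))}$ to isolate the mismatch term $\alpha H_\xi(0)\bigl(\chi_{\Omega_d(X_{\Delta x}(s))}-\chi_{\Omega_d(X(s))}\bigr)$ and then bound $H_\xi(0)\bigl|\min(t,\tau)-\min(t,\tau_{\Delta x})\bigr|$ directly via $H_\xi(0)\tau(\xi)=-2U_\xi(0,\xi)$, $H_{\Delta x,\xi}(0)\tau_{\Delta x}(\xi)=-2U_{\Delta x,\xi}(0,\xi)$, and a short case analysis. Both paths feed the resulting $\int_{B(t)}|U_\xi(0)-U_{\Delta x,\xi}(0)|\,d\xi$ through Cauchy--Schwarz and the Chebyshev bound $\mathrm{meas}(B(t))\le 2(1+\tfrac14 t^2)G_\infty(0)$ (cf.\ \cite[Cor.~2.4]{alphaHS}). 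Your identity-based organization is arguably cleaner; the paper's ODE trick avoids manipulating $H_\xi(0)|\tau-\tau_{\Delta x}|$ explicitly. One point you leave implicit: on $\Omega_{d,c}(0)$ and $\Omega_{c,d}(0)$ one of the breaking times is infinite, so the bound $\min(t,|\tau-\tau_{\Delta x}|)$ degenerates to $t$, and you must use $H_\xi(0)\min(t,\tau)\le H_\xi(0)\tau=2|U_\xi(0)|\le 2|U_\xi(0)-U_{\Delta x,\xi}(0)|$ (since the signs differ) to recover the stated estimate --- this is the case analysis you correctly flag as ``where the work lies''.
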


\begin{proof}
Let $X(0)=L((u_0, \mu_0,\nu_0))$ and $X_{\Delta x}(0)= L\circ P_{\Delta x}((u_0,\mu_0,\nu_0))$. Combining Lemma \ref{lem:initial_conv_lagrangian}, Lemma \ref{lem:Convergences_derivatives_without_rates}, and Proposition \ref{prop:convergence_g_initially} yields $d(X(0), X_{\Delta x}(0)) \rightarrow 0$. Furthermore, see \cite[Thm. 4.18]{alphaHS}, 
\begin{align*}
    d(X(t), X_{\Delta x}(t)) &\leq \left(3 + \frac{3}{2}t + \frac{1}{2}t^2 + \frac{3}{16}t^3 + \sqrt{F_{\infty}(0)} \left(1 + \frac{1}{4}t + \frac{1}{4}t^2 + \frac{1}{16}t^3 \right) \right)\\ 
    & \qquad \times e^{ \left(2 + \sqrt{F_\infty(0)} \left(\frac{1}{2} + \frac{1}{8}t + \frac{1}{16}t^2 \right) \right)t} d(X(0), X_{\Delta x}(0)). 
\end{align*}
and hence \eqref{eq:convergence_y_later}--\eqref{eq:convergence_H_xi_later} hold.

Since $g(X)(t,\cdot)-g(X_{\Delta x})(t,\cdot)$ and $H(t,\cdot)-H_{\Delta x}(t,\cdot)$ are time-independent, \eqref{eq:convergence_g_later_1} and \eqref{eq:convergence_H_later} follow immediately from Proposition~\ref{prop:convergence_g_initially} and \eqref{eq:initial_approx_H}. 

It remains to show \eqref{eq:convergence_V_xi_later}. Recalling \eqref{def.omegas}, introduce
\begin{equation*}
		\Omega_{n, m}(t) = \Omega_n(X(t)) \cap \Omega_m(X_{\Delta x}(t)), \quad \text{for } n, m \in \{c, d \}. 
\end{equation*}
Using Fubini's theorem, we write
\begin{subequations}
\begin{align}\nonumber
    \int_0^t\int_{\R} \left| V_{\xi}(s, \xi) - V_{\Delta x, \xi}(s, \xi) \right|d\xi ds &\leq \int_{\Omega_{c, c}(0)} \int_0^t \left|V_{\xi}(s, \xi) - V_{\Delta x, \xi}(s, \xi) \right|ds d\xi \\ 
    & \qquad + \int_{\Omega_{d, c}(0)} \int_0^t \left|V_{\xi}(s, \xi) - V_{\Delta x, \xi}(s, \xi) \right|ds d\xi \label{eq:L1_conv_V_xi_2}\\ 
    & \qquad + \int_{\Omega_{c, d}(0)} \int_0^t \left|V_{\xi}(s, \xi) - V_{\Delta x, \xi}(s, \xi) \right|ds d\xi \label{eq:L1_conv_V_xi_3}\\ 
    & \qquad + \int_{\Omega_{d, d}(0)} \int_0^t \left|V_{\xi}(s, \xi) - V_{\Delta x, \xi}(s, \xi) \right|ds d\xi. \label{eq:L1_conv_V_xi_4}
\end{align}
\end{subequations}
Since no wave breaking occurs for $\xi \in \Omega_{c,c}(0)$,
\begin{align}
    \int_{\Omega_{c, c}(0)} \int_0^t |V_{\xi}(s, \xi) &- V_{\Delta x, \xi}(s, \xi) |ds d\xi \nonumber \\ &= \int_{\Omega_{c, c}(0)}\int_0^t\left|g(X)(0, \xi) - g(X_{\Delta x})(0, \xi) \right|ds d\xi. 
    \label{eq:Vxi_term1}
\end{align}

The terms \eqref{eq:L1_conv_V_xi_2} and \eqref{eq:L1_conv_V_xi_3} can be treated similarly, so we only consider \eqref{eq:L1_conv_V_xi_2}. Since any $\xi\in \Omega_d(X_{\Delta x}(0))$ might enter $\Omega_c$ within the time interval $[0,t]$, we have  
\begin{align}
\label{eq:Vxi_term2} \nonumber
    \int_{\Omega_{d, c}(0)} \int_0^t |&V_{\xi}(s, \xi) - V_{\Delta x, \xi}(s, \xi) |ds d\xi 
    \\ \nonumber &= \int_{\Omega_{d, c}(0) \cap \Omega_{d, c}(t)} \int_0^t \left|V_{\xi}(0, \xi) - V_{\Delta x, \xi}(0, \xi) \right|ds d\xi \\  \nonumber
    & \qquad + \int_{\Omega_{d, c}(0) \cap \Omega_{c, c}(t)} \int_0^{\tau(\xi)} \left|V_{\xi}(0, \xi) - V_{\Delta x, \xi}(0, \xi) \right|ds d\xi \\  \nonumber
    & \qquad +  \int_{\Omega_{d, c}(0) \cap \Omega_{c, c}(t)} \int_{\tau(\xi)}^t \left|(1-\alpha)V_{\xi}(0, \xi) - V_{\Delta x, \xi}(0, \xi) \right|ds d\xi  \\ \nonumber
    &\leq \int_{\Omega_{d, c}(0)}\int_0^t \left|H_{\xi}(0, \xi) - H_{\Delta x, \xi}(0, \xi) \right|ds d\xi \\ 
    & \qquad + \int_{\Omega_{d, c}(0)} \int_0^t \left|g(X)(0, \xi) - g(X_{\Delta x})(0, \xi) \right|ds d\xi,
\end{align}
where we used in the last step that $X(0)$ and $X_{\Delta x}(0)$ belong to $\F_0^0$.

For \eqref{eq:L1_conv_V_xi_4} we use a similar decomposition, yielding
\begin{subequations}
\begin{align}
    \int_{\Omega_{d, d}(0)} \int_0^t |V_{\xi}(s, \xi) &- V_{\Delta x, \xi}(s, \xi) |ds d\xi \nonumber \\ &= \int_{\Omega_{d, d}(0) \cap \Omega_{d, d}(t)} \int_0^t\left|V_{\xi}(s, \xi) - V_{\Delta x, \xi}(s, \xi) \right|ds d\xi \notag \\ 
    &  \quad + \int_{\Omega_{d, d}(0) \cap \Omega_{c, d}(t)} \int_0^t\left|V_{\xi}(s, \xi) - V_{\Delta x, \xi}(s, \xi) \right|ds d\xi \label{eq:double_negative_2}\\ 
    &  \quad + \int_{\Omega_{d, d}(0) \cap \Omega_{d, c}(t)} \int_0^t\left|V_{\xi}(s, \xi) - V_{\Delta x, \xi}(s, \xi) \right|ds d\xi \label{eq:double_negative_3}\\ 
    &  \quad + \int_{\Omega_{d, d}(0) \cap \Omega_{c, c}(t)} \!\int_0^t\left|V_{\xi}(s, \xi) - V_{\Delta x, \xi}(s, \xi) \right|ds d\xi. \label{eq:double_negative_4}
\end{align}
\end{subequations}
For the first term we have
\begin{align}
    \int_{\Omega_{d, d}(0) \cap \Omega_{d, d}(t)} \int_0^t|V_{\xi}(s, \xi) &- V_{\Delta x, \xi}(s, \xi) |ds d\xi \nonumber \\
    &=  \int_{\Omega_{d, d}(0) \cap \Omega_{d, d}(t)} \int_0^t \left|H_{\xi}(0, \xi) - H_{\Delta x, \xi}(0, \xi) \right|ds d\xi. 
    \label{eq:Vxi_term3}
\end{align}

 For \eqref{eq:double_negative_2}, recalling \eqref{eq:stability_function_g} and that $X(0)$ and $X_{\Delta x}(0)$ belong to $\F_0^0$, we have 
\begin{align*}
    \int_{\Omega_{d, d}(0) \cap \Omega_{c, d}(t)} \int_0^t&|V_{\xi}(s, \xi) - V_{\Delta x, \xi}(s, \xi) |ds d\xi \nonumber \\
    &= \int_{\Omega_{d, d}(0) \cap \Omega_{c, d}(t)} \int_0^{\tau(\xi)} \left|V_{\xi}(0, \xi) - V_{\Delta x, \xi}(0, \xi) \right|ds d\xi \nonumber \\ 
    & \qquad + \int_{\Omega_{d, d}(0) \cap \Omega_{c, d}(t)} \int_{\tau(\xi)}^t \left|(1-\alpha)V_{\xi}(0, \xi) - V_{\Delta x, \xi}(0, \xi) \right|ds d\xi \nonumber \\ 
    &\leq  \int_{\Omega_{d, d}(0) \cap \Omega_{c, d}(t)} \int_0^t \left|H_{\xi}(0, \xi) - H_{\Delta x, \xi}(0, \xi) \right|ds d\xi  
    \\ 
    & \qquad + \int_{\Omega_{d, d}(0) \cap \Omega_{c, d}(t)} \int_{\tau(\xi)}^t \alpha V_{\Delta x, \xi}(0, \xi)dsd\xi. \nonumber  
\end{align*}
For the last term in the above inequality, observe that $U_\xi(t,\xi)$ and $U_{\Delta x, \xi}(t,\xi)$ are increasing functions, cf. \eqref{eq:integrated_ODE2}, which equal zero at $t= \tau(\xi)$ and $t= \tau_{\Delta x}(\xi)$, respectively, for $\xi \in \Omega_{d, d}(0)$. Furthermore, \eqref{eq:integrated_ODE2} implies 
\begin{align*}
    \int_{\Omega_{d, d}(0) \cap \Omega_{c, d}(t)} \! \int_{\tau(\xi)}^t \alpha &V_{\Delta x, \xi}(0, \xi)dsd\xi \\ 
    = &2 \alpha \! \int_{\Omega_{d, d}(0) \cap \Omega_{c, d}(t)} \left(U_{\Delta x, \xi}(t, \xi) - U_{\Delta x, \xi}(\tau(\xi), \xi) \right)d\xi\\
    \leq & \, 2\alpha \int_{\Omega_{d, d}(0) \cap \Omega_{c, d}(t)} \left(U_{\xi}(\tau(\xi), \xi) - U_{\Delta x, \xi}(\tau(\xi), \xi) \right)d\xi \\ 
    \leq & \, 2 \alpha  \int_{\Omega_{d, d}(0) \cap \Omega_{c, d}(t)} \left|U_{\xi}(0, \xi) - U_{\Delta x, \xi}(0, \xi) \right|d\xi \\  
    & + \alpha \! \int_{\Omega_{d, d}(0) \cap \Omega_{c, d}(t)} \!\int_0^{\tau(\xi)} \left| H_{\xi}(0, \xi) - H_{\Delta x, \xi}(0, \xi) \right|d\xi, 
\end{align*}
and thus
\begin{align}
     \int_{\Omega_{d, d}(0) \cap \Omega_{c, d}(t)} \int_0^t|V_{\xi}&(s, \xi) - V_{\Delta x, \xi}(s, \xi) |ds d\xi \nonumber \\
     &\leq \left(1 + \alpha \right)  \int_{\Omega_{d, d}(0) \cap \Omega_{c, d}(t)} \int_0^t \left|H_{\xi}(0, \xi) - H_{\Delta x, \xi}(0, \xi) \right|ds d\xi    \nonumber \\ 
     & \qquad + 2\alpha \int_{\Omega_{d, d}(0) \cap \Omega_{c, d}(t)} \left|U_{\xi}(0, \xi) - U_{\Delta x, \xi}(0, \xi) \right|d\xi. \label{eq:Vxi_term4}  
\end{align}
The term \eqref{eq:double_negative_3} is bounded similarly.

To estimate \eqref{eq:double_negative_4}, we split $\Omega_{d, d}(0) \cap \Omega_{c, c}(t)$ into two sets,
\begin{equation*}
\Omega_{d, d}(0) \cap \Omega_{c, c}(t)= A(t) \cup A_{\Delta x}(t),
\end{equation*}
where
\begin{equation*}
A(t) := \{\xi: 0<\tau(\xi) \leq \tau_{\Delta x}(\xi) \leq t \}\text{ and } A_{\Delta x}(t):= \{\xi:0< \tau_{\Delta x}(\xi) < \tau(\xi) \leq t \}.
\end{equation*}
We only present the details for the integral over $A(t)$, since the other one can be treated similarly. Following closely the argument leading to \eqref{eq:Vxi_term4}, we get
\begin{align}\nonumber
    \int_{ A(t)}\int_0^t|V_{\xi}(s, \xi) - V_{\Delta x, \xi}(s, \xi) |ds d\xi 
    &\leq\int_{A(t)} \int_0^{t}\left| H_{\xi}(0, \xi) - H_{\Delta x, \xi}(0, \xi) \right|ds d\xi \\ \nonumber
    & \qquad + \int_{ A(t)}\int_{\tau(\xi)}^{\tau_{\Delta x}(\xi)} \alpha V_{\Delta x}(0,\xi)ds d\xi\\ \nonumber
   &\leq \left(1+\alpha \right) \int_{ A(t)} \int_0^t \left|H_{\xi}(0, \xi) - H_{\Delta x, \xi}(0, \xi) \right|ds d\xi \\ 
   & \quad + 2\alpha \int_{  A(t)} \left|U_{\xi}(0, \xi) - U_{\Delta x, \xi}(0, \xi) \right|d\xi. \label{eq:Vxi_term5}
\end{align}

Combining \eqref{eq:Vxi_term1}--\eqref{eq:Vxi_term2}, \eqref{eq:Vxi_term3}, \eqref{eq:Vxi_term4}--\eqref{eq:Vxi_term5}, and analogous estimates for the remaining cases, yields 
    \begin{align} \nonumber
        \int_0^t \int_{\R}|V_{\xi}(s, \xi) -V_{\Delta x, \xi}(s, \xi)|d\xi ds & \leq (1 + \alpha)t \int_{\R} \left| H_{\xi}(0, \xi) - H_{\Delta x, \xi}(0, \xi) \right|d\xi \\ \nonumber &\qquad + t \int_{\R} \left|g(X)(0, \xi) - g(X_{\Delta x})(0, \xi) \right|d\xi \\ & \qquad + 2\alpha  \int_{B(t)} \left|U_{\xi}(0, \xi) - U_{\Delta x, \xi}(0, \xi) \right|d\xi, \label{eq:pre_estimate_V_xi}
    \end{align}    
where 
    \begin{equation*}
        B(t) = \{\xi \in \R: \tau(\xi) \leq t \} \bigcup \{\xi \in \R: \tau_{\Delta x}(\xi) \leq t \}. 
    \end{equation*}
    Furthermore, $\mathrm{meas} \! \left(\{\xi \in \R: \tau(\xi) \leq t \}\right) \leq (1 + \frac{1}{4}t^2)H_{\infty}(0)$, and likewise for the set $\{\xi: \tau_{\Delta x}(\xi) \leq t \}$, by \cite[Cor. 2.4]{alphaHS}. Hence 
\begin{align}
    \mathrm{meas}\!\left(B(t) \right) \leq 2\left(1 + \frac{1}{4}t^2 \right)H_{\infty}(0).
        \label{eq:measure_one_broken}
    \end{align}
Thus, by applying the Cauchy--Schwarz inequality to the last term in \eqref{eq:pre_estimate_V_xi} and inserting \eqref{eq:measure_one_broken}, we obtain \eqref{eq:convergence_V_xi_later}. 
\end{proof}

\begin{corollary} \label{corr:PointwiseConv}
 Let $(u_0, \mu_0, \nu_0) \in \D_0$ and $t \geq 0$. Set $X(t) = S_t \circ L \left((u_0, \mu_0, \nu_0 ) \right)$ and $X_{\Delta x}(t) = S_{t} \circ L \circ P_{\Delta x} \left( (u_0, \mu_0, \nu_0 )\right)$. Then there exists a subsequence $\{\Delta x_m \}_{ m \in \mathbb{N}}$ with $\Delta x_m \rightarrow 0$ as $m \rightarrow \infty$, such that for a.e. $t \in [0, \infty)$ we have
    \begin{align}
        V_{\Delta x_m, \xi}(t,\cdot) & \rightarrow V_{\xi}(t,\cdot) \text{ in } L^1(\R), \notag \\
        V_{\Delta x_m, \infty}(t) & \rightarrow V_{\infty}(t). \label{eq:seq_converges_V_xi}
    \end{align}
\end{corollary}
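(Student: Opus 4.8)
The plan is to derive the corollary from the integral estimate \eqref{eq:convergence_V_xi_later} of Theorem~\ref{thm:convergence_lagrangian_later} via a standard subsequence extraction combined with a diagonalization over a sequence of time intervals.

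First I would note that the right-hand side of \eqref{eq:convergence_V_xi_later} tends to zero as $\Delta x \to 0$: by Lemma~\ref{lem:Convergences_derivatives_without_rates} we have $H_{\Delta x, \xi}(0) \to H_{\xi}(0)$ in $L^1(\R)$ and $U_{\Delta x, \xi}(0) \to U_{\xi}(0)$ in $L^2(\R)$, and by Proposition~\ref{prop:convergence_g_initially} we have $g(X_{\Delta x}(0)) \to g(X(0))$ in $L^1(\R)$, while the prefactors in \eqref{eq:convergence_V_xi_later} depend only on $t$ and on the fixed quantity $G_{\infty}(0)$. Hence, for every fixed $T > 0$,
\begin{equation*}
    \int_0^T \|V_{\Delta x, \xi}(s, \cdot) - V_{\xi}(s, \cdot)\|_{L^1(\R)}\, ds \longrightarrow 0 \quad \text{as } \Delta x \to 0,
\end{equation*}
where the integrand is a measurable function of $s$, since $(s,\xi) \mapsto |V_{\xi}(s,\xi) - V_{\Delta x, \xi}(s,\xi)|$ is jointly measurable (by the explicit formula \eqref{eq:integrated_ODE3} and Fubini's theorem). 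In other words, $s \mapsto \|V_{\Delta x, \xi}(s, \cdot) - V_{\xi}(s, \cdot)\|_{L^1(\R)}$ converges to $0$ in $L^1([0,T])$.

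Next I would use that convergence in $L^1([0,T])$ implies a.e.\ pointwise convergence along a subsequence. Applying this for $T = 1$, extracting a further subsequence for $T = 2$, and so on, a diagonal argument produces a single sequence $\{\Delta x_m\}_{m \in \mathbb{N}}$ with $\Delta x_m \to 0$ such that, for a.e.\ $t \in [0, \infty)$,
\begin{equation*}
    \|V_{\Delta x_m, \xi}(t, \cdot) - V_{\xi}(t, \cdot)\|_{L^1(\R)} \longrightarrow 0,
\end{equation*}
which is the first claimed convergence. For the second, recall that the initial data lying in $\D$ together with \eqref{eq:integrated_ODE3} forces $V_{\Delta x, -\infty}(t) = V_{-\infty}(t) = 0$ for all $t \geq 0$, so that $V_{\Delta x, \infty}(t) = \int_{\R} V_{\Delta x, \xi}(t, \xi)\, d\xi$ and $V_{\infty}(t) = \int_{\R} V_{\xi}(t, \xi)\, d\xi$. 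Consequently, for the same full-measure set of times $t$,
\begin{equation*}
    |V_{\Delta x_m, \infty}(t) - V_{\infty}(t)| \leq \|V_{\Delta x_m, \xi}(t, \cdot) - V_{\xi}(t, \cdot)\|_{L^1(\R)} \longrightarrow 0,
\end{equation*}
which establishes \eqref{eq:seq_converges_V_xi}.

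I do not expect a genuine obstacle here; the only point requiring care is the bookkeeping in the diagonalization, namely ensuring that a single subsequence works for almost every $t$ in all of $[0,\infty)$, which is exactly what diagonalizing over the intervals $[0,T]$, $T \in \mathbb{N}$, achieves, the exceptional set being a countable union of null sets and hence itself null. Everything else is an immediate consequence of Theorem~\ref{thm:convergence_lagrangian_later} and the initial-data convergence results.
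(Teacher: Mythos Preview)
Your proposal is correct and follows essentially the same approach as the paper: both use the integral estimate \eqref{eq:convergence_V_xi_later} together with the standard fact that $L^1$-convergence implies a.e.\ pointwise convergence along a subsequence, and then deduce the convergence of $V_{\Delta x_m,\infty}(t)$ from $V_{\infty}(t)=\|V_{\xi}(t)\|_1$. The only difference is cosmetic: the paper cites Folland's subsequence principle in one line, whereas you spell out the diagonalization over $T\in\mathbb{N}$ explicitly, which is a welcome clarification since \eqref{eq:convergence_V_xi_later} only controls $\int_0^T$ for finite $T$.
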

\begin{proof}
By \eqref{eq:convergence_V_xi_later} and \cite[Cor. 2.32]{RealAnalysisFolland}, there exists a subsequence $\{V_{\Delta x_m, \xi }\}_{m \in \mathbb{N}}$ such that $V_{\Delta x_m, \xi}(t,\cdot) \rightarrow V_{\xi}(t,\cdot)$ in $L^1(\R)$ for a.e. $t \in [0, \infty)$. Let $N \subset [0, \infty)$ be the null set of times for which the convergence does not hold. Thus, for $t \in N^c$ we have 
\begin{align*}
    V_{\Delta x_m, \infty}(t) = \|V_{\Delta x_m, \xi}(t) \|_1 \rightarrow \|V_{\xi}(t) \|_1 = V_{\infty}(t). \hspace{1.5cm} \qedhere
\end{align*}
\end{proof}

Observe that the times for which the convergence in Corollary~\ref{corr:PointwiseConv} fails depend on the particular chosen subsequence. Therefore, there is no natural way to extend this convergence to the whole sequence.

\subsection{Convergence of the \texorpdfstring{$\alpha$}{\textalpha}-dissipative solution in Eulerian coordinates}

Finally, we can examine in what sense convergence in Lagrangian coordinates, given by Theorem \ref{thm:convergence_lagrangian_later} and Corollary \ref{corr:PointwiseConv}, carries over to Eulerian coordinates. 

\begin{lemma}
    Given $(u_0, \mu_0, \nu_0) \in \D_0$, let $(u, \mu, \nu)(t)= T_t \left((u_0, \mu_0, \nu_0 )\right)$ and \phantom{aa} $(u_{\Delta x}, \mu_{\Delta x}, \nu_{\Delta x})(t) = T_t \circ P_{\Delta x} \left( (u_0, \mu_0, \nu_0 )\right)$ for $t \in [0, \infty)$. Then
    \begin{align}
        \|u(t) - u_{\Delta x}(t)\|_{\infty} &\leq \|U(t) - U_{\Delta x}(t) \|_{\infty} + \sqrt{F_\infty(0)} \sqrt{\|y(t) - y_{\Delta x}(t) \|_{\infty}}, \label{eq:convergence_u} \\
         \|G(t) - G_{\Delta x}(t) \|_{1} &\leq 4e^{\frac12 t}G_{\infty}(0) \Delta x + 2G_\infty(0)\|y(t) - y_{\Delta x}(t) \|_\infty \nonumber \\
          &+e^{\frac{1}{4} t}G_\infty(0)^{\frac{3}{2}} \|y_{\xi}(t) - y_{\Delta x, \xi}(t) \|_2 . \label{eq:convergence_G_1} \end{align}
   \end{lemma}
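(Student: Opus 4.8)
The plan is to treat the two estimates separately: the first is a short characteristic argument in the spirit of the proof of \eqref{eq:initial_approx_U} in Lemma~\ref{lem:initial_conv_lagrangian}, while the second amounts to transporting the Lagrangian information of Theorem~\ref{thm:convergence_lagrangian_later} back through the map $M$ of Definition~\ref{def:MappingM} and estimating carefully, since $G(t,\cdot)$ and $G_{\Delta x}(t,\cdot)$ are obtained from two different characteristic maps \emph{and} two different energy densities.

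\emph{Proof of \eqref{eq:convergence_u}.} Given $x\in\R$, pick $\xi,\xi_{\Delta x}$ with $y(t,\xi)=x=y_{\Delta x}(t,\xi_{\Delta x})$ (these exist since $y(t,\cdot)$ and $y_{\Delta x}(t,\cdot)$ are continuous and surjective). By Definition~\ref{def:MappingM}, $u(t,x)=U(t,\xi)$, $u_{\Delta x}(t,x)=U_{\Delta x}(t,\xi_{\Delta x})$ and $U_{\Delta x}(t,\xi)=u_{\Delta x}(t,y_{\Delta x}(t,\xi))$, so that
\[
u(t,x)-u_{\Delta x}(t,x)=\big(U(t,\xi)-U_{\Delta x}(t,\xi)\big)+\big(u_{\Delta x}(t,y_{\Delta x}(t,\xi))-u_{\Delta x}(t,y(t,\xi))\big).
\]
The first term is bounded by $\|U(t)-U_{\Delta x}(t)\|_{\infty}$. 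For the second, Cauchy--Schwarz gives $|u_{\Delta x}(t,y_{\Delta x}(t,\xi))-u_{\Delta x}(t,y(t,\xi))|\le\|u_{\Delta x,x}(t)\|_2\,|y_{\Delta x}(t,\xi)-y(t,\xi)|^{1/2}$, and since $d\mu_{\Delta x,\mathrm{ac}}(t)=u_{\Delta x,x}^2(t)\,dx$ while $\|u_{\Delta x,x}(t)\|_2^2=\mu_{\Delta x,\mathrm{ac}}(t,\R)\le\mu_{\Delta x}(t,\R)=V_{\Delta x,\infty}(t)\le V_{\Delta x,\infty}(0)=F_{\infty}(0)$, using \eqref{eq:integrated_ODE3} and that $P_{\Delta x}$ preserves the total $\mu$-energy, the second term is at most $\sqrt{F_{\infty}(0)}\,\|y(t)-y_{\Delta x}(t)\|_{\infty}^{1/2}$. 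Taking the supremum over $x$ yields \eqref{eq:convergence_u}.

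\emph{Proof of \eqref{eq:convergence_G_1}.} Using $\nu(t)=y(t)_{\#}(H_{\xi}(t)\,d\xi)$, $\nu_{\Delta x}(t)=y_{\Delta x}(t)_{\#}(H_{\Delta x,\xi}(t)\,d\xi)$ and $H_{\xi}(t)=H_{\xi}(0)$, $H_{\Delta x,\xi}(t)=H_{\Delta x,\xi}(0)$, I would write $G(t,x)=\int_{\R}\chi_{\{y(t,\eta)<x\}}H_{\xi}(t,\eta)\,d\eta$ and likewise for $G_{\Delta x}$, and split
\[
G(t,x)-G_{\Delta x}(t,x)=\underbrace{\int_{\R}\!\big(\chi_{\{y(t,\eta)<x\}}-\chi_{\{y_{\Delta x}(t,\eta)<x\}}\big)H_{\xi}(t,\eta)\,d\eta}_{A(t,x)}+\underbrace{\int_{\R}\!\chi_{\{y_{\Delta x}(t,\eta)<x\}}\big(H_{\xi}(t,\eta)-H_{\Delta x,\xi}(t,\eta)\big)\,d\eta}_{B(t,x)}.
\]
For $A$, Tonelli and $\int_{\R}|\chi_{\{a<x\}}-\chi_{\{b<x\}}|\,dx=|a-b|$ give $\int_{\R}|A(t,x)|\,dx\le\int_{\R}H_{\xi}(t,\eta)|y(t,\eta)-y_{\Delta x}(t,\eta)|\,d\eta\le G_{\infty}(0)\|y(t)-y_{\Delta x}(t)\|_{\infty}$, since $\int_{\R}H_{\xi}(t,\cdot)=H_{\infty}(0)=G_{\infty}(0)$. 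For $B$, the primitive $\Psi:=H(t)-H_{\Delta x}(t)$ of $H_{\xi}(t)-H_{\Delta x,\xi}(t)$ satisfies $\Psi(\pm\infty)=0$ because $P_{\Delta x}$ preserves the total $\nu$-mass ($H_{\Delta x,\infty}(0)=G_{\infty}(0)=H_{\infty}(0)$); integrating by parts in $\eta$ and changing variables through $\eta^{*}(x):=\sup\{\eta:y_{\Delta x}(t,\eta)<x\}$, whose pushforward of $dx$ is $y_{\Delta x,\xi}(t,\cdot)\,d\eta$, one gets $B(t,x)=\Psi(\eta^{*}(x))$ and hence $\int_{\R}|B(t,x)|\,dx=\int_{\R}|H(t,\eta)-H_{\Delta x}(t,\eta)|\,y_{\Delta x,\xi}(t,\eta)\,d\eta$.

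The main obstacle is estimating this last integral: the uniform bound $\|H(t)-H_{\Delta x}(t)\|_{\infty}\le 2\Delta x$ of \eqref{eq:convergence_H_later} is by itself useless, since $y_{\Delta x,\xi}(t,\cdot)\notin L^1(\R)$. The plan is to split $y_{\Delta x,\xi}(t,\eta)=y_{\Delta x,\xi}(0,\eta)+\int_0^t U_{\Delta x,\xi}(s,\eta)\,ds$. Running the $A/B$-splitting at $t=0$ identifies $\int_{\R}|H(t,\eta)-H_{\Delta x}(t,\eta)|\,y_{\Delta x,\xi}(0,\eta)\,d\eta$ with $\int_{\R}|B(0,x)|\,dx\le\|G(0)-G_{\Delta x}(0)\|_{1}+G_{\infty}(0)\|y(0)-y_{\Delta x}(0)\|_{\infty}\le 4G_{\infty}(0)\Delta x$ by \eqref{eq:projection_G_Lp} and \eqref{eq:initial_approx_y}. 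For the remaining part one bounds, from \eqref{eq:integrated_ODE2}--\eqref{eq:integrated_ODE3}, $U_{\Delta x,\xi}^2=y_{\Delta x,\xi}V_{\Delta x,\xi}$ and $0\le y_{\Delta x,\xi}(0),V_{\Delta x,\xi}(0)\le 1$, that $|U_{\Delta x,\xi}(s,\eta)|\le|U_{\Delta x,\xi}(0,\eta)|+\tfrac{s}{2}V_{\Delta x,\xi}(0,\eta)\le(1+\tfrac{s}{2})\sqrt{V_{\Delta x,\xi}(0,\eta)}$, so $\int_0^t|U_{\Delta x,\xi}(s,\eta)|\,ds$ grows only polynomially in $t$; a Cauchy--Schwarz inequality against $\sqrt{V_{\Delta x,\xi}(0,\cdot)}$ (with $\int_{\R}V_{\Delta x,\xi}(0,\cdot)=F_{\infty}(0)\le G_{\infty}(0)$) then reduces matters to the $L^2$-smallness $\|H(t)-H_{\Delta x}(t)\|_2\le\|H(t)-H_{\Delta x}(t)\|_{\infty}^{1/2}\|H(0)-H_{\Delta x}(0)\|_{1}^{1/2}\lesssim\sqrt{G_{\infty}(0)}\,\Delta x$, the $L^1$ bound being obtained by splitting off the set $\{H_{\Delta x,\xi}(0)\ge\tfrac12\}$, of measure $\le 2G_{\infty}(0)$ by Chebyshev's inequality, from its complement where $y_{\Delta x,\xi}(0)\ge\tfrac12$. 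Collecting the pieces for $A$ and $B$ and absorbing the polynomial-in-$t$ prefactors into $e^{t/2}$ and $e^{t/4}$ gives a bound of the form \eqref{eq:convergence_G_1}; the $\|y_\xi(t)-y_{\Delta x,\xi}(t)\|_2$ term enters once one prefers to control the perturbation of the characteristic Jacobian at time $t$ rather than at $t=0$, and the whole difficulty lies in this bookkeeping — extracting a genuinely $O(\Delta x)$ estimate from an $x$-integral with a non-integrable Lagrangian weight, while keeping only the three stated quantities on the right-hand side.
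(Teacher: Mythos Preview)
Your argument for \eqref{eq:convergence_u} is correct and is exactly what the paper does (it simply refers back to the proof of \eqref{eq:initial_approx_U}).

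For \eqref{eq:convergence_G_1} you take a genuinely different route from the paper. The paper changes variables $x=y(t,\xi)$ to obtain $\|G(t)-G_{\Delta x}(t)\|_1=\int_{\R}|G(t,y(t,\xi))-G_{\Delta x}(t,y(t,\xi))|\,y_\xi(t,\xi)\,d\xi$ and then splits the integrand into three pieces: $|H-H_{\Delta x}|$, $|H_{\Delta x}-G_{\Delta x}\circ y_{\Delta x}|$, and $|G_{\Delta x}\circ y-G_{\Delta x}\circ y_{\Delta x}|$. The first piece is handled via the growth bound $y_\xi(t)+H_\xi(t)\le e^{t/2}(y_\xi(0)+H_\xi(0))=e^{t/2}$ together with the fact that $H$ and $H_{\Delta x}$ agree at the nodes $\hat\xi_{3j}$, giving exactly $4e^{t/2}G_\infty(0)\Delta x$. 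The third piece is a BV/telescoping argument yielding $2G_\infty(0)\|y(t)-y_{\Delta x}(t)\|_\infty$. The key step you are missing is the treatment of the middle piece: $H_{\Delta x}(t,\xi)-G_{\Delta x}(t,y_{\Delta x}(t,\xi))$ is supported on the set $\mathcal S_{\Delta x}(t)=\{y_{\Delta x,\xi}(t)=0\}$, and \emph{on that set} one can replace $y_\xi$ by $y_\xi-y_{\Delta x,\xi}$; Cauchy--Schwarz then produces precisely the factor $\|y_\xi(t)-y_{\Delta x,\xi}(t)\|_2$, while $\mathrm{meas}(\mathcal S_{\Delta x}(t))\le e^{t/2}G_\infty(0)$ gives the $e^{t/4}G_\infty(0)^{3/2}$ prefactor.

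Your $A/B$ decomposition is a legitimate alternative, and your bound on $A$ is even slightly sharper ($G_\infty(0)$ versus $2G_\infty(0)$). The sketch for $B$ can indeed be pushed through to give \emph{some} convergent estimate of the right order in $\Delta x$, but it does not produce the specific right-hand side of \eqref{eq:convergence_G_1}: the $\|y_\xi(t)-y_{\Delta x,\xi}(t)\|_2$ term never arises naturally in your framework, and your closing sentence about it ``entering once one prefers to control the perturbation of the characteristic Jacobian at time $t$'' is not an argument. If the task is to prove the inequality exactly as stated, that is a gap; if the task is merely to establish $\|G(t)-G_{\Delta x}(t)\|_1\to 0$, your route works once the bookkeeping is completed, but yields a different (and not obviously worse) estimate.
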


\begin{proof}
Following the same line of reasoning as in the proof of \eqref{eq:initial_approx_U}, yields  \eqref{eq:convergence_u}. 

Before proving \eqref{eq:convergence_G_1}, recall that by Definition~\ref{def:MappingM}, we have  
\begin{equation*}
 G(t,y(t,\xi))=H(t,\xi) \quad \text{ for all } \xi \text{ such that } G(t,\cdot) \text{ is continuous at } y(t,\xi)
 \end{equation*} 
 and likewise for $G_{\Delta x}(t,x)$. Thus, the change of variables $x=y(t,\xi)$ yields
 \begin{subequations}\label{splitting:G:L1}
\begin{align}
    \|G(t) - G_{\Delta x}(t) \|_1 &= \int_{\R} |G(t, y(t, \xi)) - G_{\Delta x}(t, y(t, \xi)) |y_{\xi}(t, \xi)d\xi \nonumber \\ 
    &\leq \int_{\R}|H(t, \xi) - H_{\Delta x}(t, \xi)|y_{\xi}(t, \xi)d\xi \notag  \\ 
    & \qquad + \int_{\R}|H_{\Delta x}(t, \xi) - G_{\Delta x}(t, y_{\Delta x}(t, \xi))|y_{\xi}(t, \xi)d\xi \label{eq:L1_conv_G_part_1_2}\\ 
    & \qquad + \int_{\R}|G_{\Delta x}(t, y(t, \xi)) - G_{\Delta x}(t, y_{\Delta x}(t, \xi))|y_{\xi}(t, \xi)d\xi. \label{eq:L1_conv_G_part_1_3} 
\end{align}
\end{subequations}
Following the steps in the proof of \cite[(2.15)]{alphaHS}, we can establish
\begin{equation}\label{grow:equiv}
e^{-\frac{t}{2}}(y_{\xi}(0,\xi)+H_\xi(0,\xi))\leq y_\xi(t,\xi)+H_\xi(t,\xi)\leq e^{\frac12t}(y_\xi(0,\xi)+H_\xi(0,\xi)),
\end{equation}
for almost every $\xi\in \R$,
which implies that 
\begin{align}\nonumber
 \int_{\R}|&H(t, \xi) - H_{\Delta x}(t, \xi)|y_{\xi}(t, \xi)d\xi \nonumber \\ 
 &\leq e^{\frac12 t}\int_{\R}|H(0,\xi)-H_{\Delta x}(0,\xi)|y_\xi(0,\xi) d\xi \nonumber \\ \nonumber
 & \quad + e^{\frac12 t} \int_\R|H(0,\xi)-H_{\Delta x}(0,\xi)|H_\xi(0,\xi) d\xi\\ \nonumber
 & \leq e^{\frac12 t}\sum_{j=-\infty}^\infty \int_{\hat \xi_{3j}}^{\hat\xi_{3j+3}} \vert G_0(x_{2j+2})-G_0(x_{2j})\vert y_\xi(0,\xi) d\xi + 2e^{\frac12 t} H_\infty(0)\Delta x\\ 
 & =  2e^{\frac12 t} \sum_{j=-\infty}^\infty (G_0(x_{2j+2})-G_0(x_{2j}))\Delta x+2e^{\frac12 t} G_\infty(0) \Delta x = 4e^{\frac12 t} G_\infty(0)\Delta x,\label{eq:L1:conv:conv}
 \end{align}
 where we used \eqref{eq:convergence_H_later} and \eqref{eq:char_coincide_even}. 
 
 For \eqref{eq:L1_conv_G_part_1_2}, note that, by construction, $y_{\Delta x}(t,\xi)$ and $H_{\Delta x}(t,\xi)$ are piecewise linear functions with nodes at the points $\hat \xi_j$ with $j\in \mathbb{Z}$. Furthermore, $H_{\Delta x}(t, \xi) - G_{\Delta x}(t, y_{\Delta x}(t, \xi))>0$ on an interval $I$ if and only if $y_{\Delta x}(t,\xi) $ is constant on this interval. Assuming that such an interval $I$ exists, an upper bound on its length can be established as follows. Observe that 
 \begin{equation*}
 I\subset \mathcal{S}_{\Delta x}(t)=\left \{\xi : \frac{y_{\Delta x, \xi}}{y_{\Delta x, \xi}+H_{\Delta x,\xi}}(t, \xi)=0 \right\}= \left \{\xi : \frac{H_{\Delta x, \xi}}{y_{\Delta x, \xi}+H_{\Delta x, \xi}}(t,\xi)=1 \right\},
 \end{equation*} 
 and hence, due to \eqref{grow:equiv},
 \begin{equation*}
 \mathrm{meas}(I)\leq \mathrm{meas}(\mathcal{S}_{\Delta x}(t))\leq \int_\R\frac{H_{\Delta x, \xi}}{y_{\Delta x, \xi}+H_{\Delta x, \xi}}(t,\xi)d\xi\leq e^{\frac12 t} H_{\Delta x, \infty}(0)=e^{\frac12 t} H_{\infty}(0). 
 \end{equation*}
Thus, it holds that 
\begin{align}
    \int_{\R}|H_{\Delta x}&(t, \xi) - G_{\Delta x}(t, y_{\Delta x}(t, \xi))|y_{\xi}(t, \xi)d\xi \nonumber \\  
    &\leq \int_{\mathcal{S}_{\Delta x}(t)}|H_{\Delta x}(t, \xi) - G_{\Delta x}(t, y_{\Delta x}(t, \xi))||y_{\xi}(t, \xi) - y_{\Delta x}(t, \xi)|d\xi \nonumber \\ 
    &\leq H_{\infty}(0) \mathrm{meas}(\mathcal{S}_{\Delta x}(t))^{\frac{1}{2}}\|y_{\xi}(t) - y_{\Delta x, \xi}(t) \|_2 \nonumber\\ 
    &\leq H_{\infty}(0)^{\frac{3}{2}} e^{\frac{1}{4}t} \|y_{\xi}(t) - y_{\Delta x, \xi}(t) \|_2. \label{eq:temp_second_term_G}
\end{align} 
To estimate \eqref{eq:L1_conv_G_part_1_3}, we exploit that $G_{\Delta x}(t)\in \text{BV}(\R)$, since it is monotonically increasing and bounded. Introducing $\Theta = \|y(t) - y_{\Delta x}(t) \|_{\infty}$, then yields
\begin{align}
\int_{\R}|G_{\Delta x}(t, &y(t, \xi)) - G_{\Delta x}(t, y_{\Delta x}(t, \xi))|y_{\xi}(t, \xi)d\xi \nonumber \\
&= \int_{\R} \left| G_{\Delta x}(t, y(t, \xi)) - G_{\Delta x}\left(t, y(t, \xi) - y(t, \xi) + y_{\Delta x}(t, \xi) \right) \right|y_{\xi}(t, \xi)d\xi \nonumber \\ 
&\leq \int_{\R} \left| G_{\Delta x}(t, y(t, \xi) + \Theta) - G_{\Delta x}(t, y(t, \xi) - \Theta ) \right|y_{\xi}(t, \xi)d\xi \nonumber \\ 
&= \sum_{j \in \mathbb{Z}}\int_{j\Theta}^{(j+1)\Theta} \left|G_{\Delta x}(t, x+\Theta) - G_{\Delta x}(t, x - \Theta) \right|dx \nonumber \\  
&= \int_0^{\Theta}\sum_{j \in \mathbb{Z}}|G_{\Delta x}(t, x + (j+1)\Theta) - G_{\Delta x}(t, x + (j-1)\Theta)|dx \nonumber \\ 
&\leq 2\Theta G_{\Delta x,\infty}(t) = 2G_\infty(0) \|y(t) - y_{\Delta x}(t) \|_{\infty}.
    \label{eq:BV_argument_G}
\end{align}
where we applied Tonelli's theorem to exchange the order of the sum and integral.

Combining \eqref{splitting:G:L1}, \eqref{eq:L1:conv:conv},   \eqref{eq:temp_second_term_G}, and \eqref{eq:BV_argument_G}, we end up with \eqref{eq:convergence_G_1}.  
\end{proof}

\begin{theorem}
Given $(u_0, \mu_0, \nu_0) \in \D_0$, let $(u, \mu, \nu)(t)= T_t \left((u_0, \mu_0, \nu_0)\right)$ and $(u_{\Delta x}, \mu_{\Delta x}, \nu_{\Delta x})(t) = T_t \circ P_{\Delta x} \left((u_0, \mu_0, \nu_0)\right)$ for $t \in [0, \infty)$. Then,  as $\Delta x\to 0$\footnote{We  say that $\eta_{\Delta x} \Longrightarrow \eta$ if $\int_{\R}\phi d\eta_{\Delta x} \rightarrow \int_{\R}\phi d \eta$ for all  $\phi \in C_b(\R):= C(\R) \cap L^{\infty}(\R)$.},
\begin{align}
u_{\Delta x}(t) &\to u(t) \text{ in } L^\infty(\R),\label{eq:convergence_u2}\\
u_{\Delta x, x}(t) &\rightharpoonup u_x(t) \text{ in } L^2(\R), \label{eq:convergence_weakly_ux}\\
G_{\Delta x}(t)& \to G(t) \text{ in } L^1(\R), \label{eq:convergence_G_11}\\
 \nu_{\Delta x}(t) & \Longrightarrow \nu(t) \label{eq:convergence_G_2},\\
  (y_{\Delta x})_{\#}(g(X_{\Delta x}(t))d\xi)  &\Longrightarrow  y_{\#}((g(X(t))d\xi)  \label{eq:convergence_g_weak}.
 \end{align}

  Furthermore, there exists a subsequence $\{\Delta x_m \}_{m \in \mathbb{N}}$ with $\Delta x_m \rightarrow 0$ as $m \rightarrow \infty$, and a null set $N \subset [0, \infty)$ such that for all $t \in N^c$ we have as $\Delta x_m\to 0$
    \begin{align}
        F_{\Delta x_m}(t, x)  &\rightarrow F(t, x) \text{ for every $x$ at which } F(t) \text{ is continuous}, \label{eq:convergence_F_1} \\ 
        F_{\Delta x_m, \infty}(t) & \rightarrow 
          F_{\infty}(t). \label{eq:convergence_F_2}
    \end{align}
    \end{theorem}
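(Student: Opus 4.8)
The plan is to transfer the Lagrangian convergence results from Theorem~\ref{thm:convergence_lagrangian_later} and Corollary~\ref{corr:PointwiseConv} to Eulerian coordinates, one statement at a time, using the explicit structure of $M$ and the auxiliary estimates established in the preceding lemma. First, \eqref{eq:convergence_u2} follows immediately from \eqref{eq:convergence_u} together with \eqref{eq:convergence_U_later} and \eqref{eq:convergence_y_later}. For \eqref{eq:convergence_weakly_ux}, I would test against $\phi \in C_c^\infty(\R)$, integrate by parts to move the derivative onto $\phi$, and use that $u_{\Delta x}(t) \to u(t)$ uniformly on the (eventually fixed) support of $\phi$; combined with a uniform $L^2$ bound on $u_{\Delta x,x}(t)$ — which comes from $\|u_{\Delta x,x}(t)\|_2^2 = \int y_{\Delta x,\xi}^{-1}U_{\Delta x,\xi}^2 \le \|V_{\Delta x,\xi}(t)\|_1 \le G_\infty(0)$ — density of $C_c^\infty$ in $L^2$ gives weak convergence. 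Statement \eqref{eq:convergence_G_11} is exactly \eqref{eq:convergence_G_1} combined with \eqref{eq:convergence_y_later} and \eqref{eq:convergence_y_xi_later}.

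For the weak-$*$ (in the sense $\Longrightarrow$ against $C_b(\R)$) convergence of measures in \eqref{eq:convergence_G_2} and \eqref{eq:convergence_g_weak}, I would push the test integrals to Lagrangian coordinates. For any $\phi \in C_b(\R)$, by definition of the pushforward $\nu_{\Delta x}(t) = y_{\Delta x}(t)_\# (H_{\Delta x,\xi}(t)d\xi)$, we have $\int_\R \phi\, d\nu_{\Delta x}(t) = \int_\R \phi(y_{\Delta x}(t,\xi)) H_{\Delta x,\xi}(t,\xi)\,d\xi$, and similarly for $\nu(t)$. Writing the difference and using $\|\phi(y_{\Delta x}) - \phi(y)\|$ small by uniform continuity of $\phi$ on a compact set (valid after a truncation argument, since the relevant mass lives where $H_{\Delta x,\xi}>0$, a set of uniformly bounded measure, and $y_{\Delta x}(t)\to y(t)$ in $L^\infty$) plus $H_{\Delta x,\xi}(t) \to H_\xi(t)$ in $L^1$ from \eqref{eq:convergence_H_xi_later}, gives \eqref{eq:convergence_G_2}. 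The same argument with $H_{\Delta x,\xi}$ replaced by $g(X_{\Delta x}(t))$ and using \eqref{eq:convergence_g_later_1} yields \eqref{eq:convergence_g_weak}; here one needs $0 \le g(X_{\Delta x}(t)) \le H_{\Delta x,\xi}(t)$ to control the tails uniformly.

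Finally, for \eqref{eq:convergence_F_1}--\eqref{eq:convergence_F_2}, the obstruction is that $F(t) = \mu(t)((-\infty,\cdot))$ depends on $V_{\Delta x,\xi}(t)$, whose $L^1$ convergence only holds for a.e.\ $t$ along a subsequence (Corollary~\ref{corr:PointwiseConv}). So I fix the subsequence $\{\Delta x_m\}$ and the null set $N$ from that corollary. For $t \in N^c$, I would reuse the decomposition in \eqref{splitting:G:L1}--\eqref{eq:BV_argument_G} with $G_{\Delta x}$, $H_{\Delta x}$ replaced by $F_{\Delta x}$, $V_{\Delta x}$: the analog of \eqref{eq:L1:conv:conv} now produces $\|V_\xi(t) - V_{\Delta x_m,\xi}(t)\|_1$ instead of a $\Delta x$-term (this is where $t\in N^c$ enters), the analog of \eqref{eq:temp_second_term_G} uses $\|y_\xi(t) - y_{\Delta x_m,\xi}(t)\|_2 \to 0$, and the BV argument \eqref{eq:BV_argument_G} uses $\|y(t) - y_{\Delta x_m}(t)\|_\infty \to 0$ together with $F_{\Delta x_m,\infty}(t) = \|V_{\Delta x_m,\xi}(t)\|_1 \to \|V_\xi(t)\|_1 = F_\infty(t)$, which is \eqref{eq:convergence_F_2} and follows directly from \eqref{eq:seq_converges_V_xi}. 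This shows $F_{\Delta x_m}(t) \to F(t)$ in $L^1(\R)$ for $t\in N^c$, hence $\mu_{\Delta x_m}(t) \to \mu(t)$ vaguely, and by \cite[Prop.~7.19]{RealAnalysisFolland} pointwise convergence at continuity points of $F(t)$, which is \eqref{eq:convergence_F_1}. The main obstacle is bookkeeping the truncation/uniform-continuity argument for the measure convergences cleanly, and making sure the a.e.-$t$ restriction is threaded consistently through the $F$-statements; everything else is a direct transcription of already-proven Lagrangian estimates through the explicit form of $M$.
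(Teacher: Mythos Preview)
Your treatment of \eqref{eq:convergence_u2}--\eqref{eq:convergence_g_weak} matches the paper's proof closely. For \eqref{eq:convergence_G_2} and \eqref{eq:convergence_g_weak} the paper avoids your truncation/uniform-continuity detour: since $\phi\in C_b(\R)$ and $y_{\Delta x}(t)\to y(t)$ in $L^\infty$, one has $\phi(y_{\Delta x}(t,\xi))\to\phi(y(t,\xi))$ pointwise, and the dominated convergence theorem with majorant $2\|\phi\|_\infty H_\xi(t)\in L^1(\R)$ handles the first piece directly. This is a simplification worth adopting, but your route would also work.

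There is, however, a genuine gap in your argument for \eqref{eq:convergence_F_1}--\eqref{eq:convergence_F_2}. You propose to show $F_{\Delta x_m}(t)\to F(t)$ in $L^1(\R)$ by mimicking the $G$-estimate, but this conclusion is false in general: for each fixed $m$ one typically has $F_{\Delta x_m,\infty}(t)\ne F_\infty(t)$, so $F(t,\cdot)-F_{\Delta x_m}(t,\cdot)$ does not even lie in $L^1(\R)$. Concretely, the analog of the first term in \eqref{splitting:G:L1} is $\int_\R |V(t,\xi)-V_{\Delta x_m}(t,\xi)|\,y_\xi(t,\xi)\,d\xi$, and since $y_\xi(t,\xi)=1$ for all large $|\xi|$ while $V(t,\xi)-V_{\Delta x_m}(t,\xi)\to V_\infty(t)-V_{\Delta x_m,\infty}(t)\ne 0$ as $\xi\to\infty$, this integral diverges. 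The $G$-argument succeeds precisely because the projection preserves total energy, forcing $G_\infty(t)=G_{\Delta x,\infty}(t)$ exactly; there is no such identity for $F$.

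The paper's fix is simpler than your plan: for $t\in N^c$, rerun the pushforward argument you already used for $\nu_{\Delta x}(t)\Longrightarrow\nu(t)$, replacing $H_{\Delta x,\xi}(t)$ by $V_{\Delta x_m,\xi}(t)$ and invoking $V_{\Delta x_m,\xi}(t)\to V_\xi(t)$ in $L^1(\R)$ from Corollary~\ref{corr:PointwiseConv}. This gives $\mu_{\Delta x_m}(t)\Longrightarrow\mu(t)$ directly; taking $\phi\equiv 1\in C_b(\R)$ yields \eqref{eq:convergence_F_2}, and restricting to $\phi\in C_0(\R)$ gives vague convergence, hence \eqref{eq:convergence_F_1} via \cite[Prop.~7.19]{RealAnalysisFolland}.
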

    
Note that in the case of conservative solutions, $F_{\Delta x} = G_{\Delta x}$ and hence \eqref{eq:convergence_G_1} implies that we have $L^1(\R)$-convergence of $F_{\Delta x}(t)$ for all $t \geq 0$.

\begin{proof}
Note that $U_{\Delta x}(t) \rightarrow U(t)$ and $y_{\Delta x}(t) \rightarrow y(t)$ in $L^{\infty}(\R)$ as $\Delta x\to 0$ by Theorem \ref{thm:convergence_lagrangian_later}, which combined with \eqref{eq:convergence_u} yields \eqref{eq:convergence_u2}.

 To prove \eqref{eq:convergence_weakly_ux}, observe that for any $\psi \in C_c^{\infty}(\R)$, 
 \begin{align*}
     \left|\int_{\R} \psi(x) \left(u_x(t, x) - u_{\Delta x, x}(t, x) \right)dx \right|  &= \left|\int_{\R}\psi'(x) \left(u(t, x) - u_{\Delta x}(t, x) \right)dx \right| \\ &\leq \|\psi ' \|_{1}\|u(t) - u_{\Delta x}(t) \|_{\infty},
 \end{align*}
 and hence, using \eqref{eq:convergence_u2}, the right hand side tends to zero as $\Delta x\to 0$. Since $C_c^\infty(\R)$ is a dense subset of $L^2(\R)$, we end up with $u_{\Delta x, x}(t) \rightharpoonup u_x(t)$ in $L^2(\R)$ as $\Delta x \rightarrow 0$. 

Next, note that $y_{\Delta x}(t) \rightarrow y(t)$ in $L^{\infty}(\R)$ and $y_{\Delta x,\xi}(t) \rightarrow y_\xi(t)$ in $L^{2}(\R)$ as $\Delta x\to 0$ by Theorem \ref{thm:convergence_lagrangian_later}, and hence \eqref{eq:convergence_G_11} follows from \eqref{eq:convergence_G_1}

To prove \eqref{eq:convergence_G_2}, let $\phi \in C_b(\R)$. Then, by Definition~\ref{def:MappingM}, we have
\begin{subequations}
\begin{align}
    \int_{\R}\phi(x)d\nu(t, x) &- \int_{\R} \phi(x)d\nu_{\Delta x}(t, x) \nonumber \\ 
    &=\int_{\R}\phi(y(t, \xi))H_{\xi}(t, \xi)d\xi  - \int_{\R}\phi(y_{\Delta x}(t, \xi))H_{\Delta x, \xi}(t, \xi)d\xi  \nonumber \\ 
    &=  \int_{\R} \left(\phi(y(t, \xi)) - \phi(y_{\Delta x}(t, \xi)) \right)H_{\xi}(t, \xi)d\xi \label{eq:weak_convergence_nu_1} \\ 
    & \qquad + \int_{\R} \phi(y_{\Delta x}(t, \xi)) \left(H_{\xi}(t, \xi) - H_{\Delta x, \xi}(t, \xi) \right)d\xi . \label{eq:weak_convergence_nu_2}
\end{align}
\label{eq:weak_convergence_nu}
\end{subequations}
Since $y_{\Delta x}(t) \rightarrow y(t)$ in $L^{\infty}(\R)$ by \eqref{eq:convergence_y_later} and $\phi\in C_b(\R)$, it follows that $\phi (y_{\Delta x}(t)) \rightarrow \phi(y(t))$ pointwise a.e.. Furthermore, $|\phi (y(t)) - \phi (y_{\Delta x}(t))|H_{\xi}(t) \leq 2\|\phi \|_{\infty}H_{\xi}(t)$ and hence, by the dominated convergence theorem, \eqref{eq:weak_convergence_nu_1} vanishes as $\Delta x \rightarrow 0$. 

For \eqref{eq:weak_convergence_nu_2} on the other hand, observe that $\phi (y_{\Delta x}(t))\in L^{\infty}(\R)$ and $H_{\Delta x, \xi}(t)\rightarrow H_{\xi}(t)$ in $L^1(\R)$ by \eqref{eq:convergence_H_xi_later}, imply
\begin{align*}
    \left| \int_{\R} \phi(y_{\Delta x}(t, \xi)) \left(H_{\xi}(t, \xi) - H_{\Delta x, \xi}(t, \xi) \right)d\xi \right| &\leq \|\phi \|_{\infty}\|H_{\xi}(t) - H_{\Delta x, \xi}(t) \|_1 \rightarrow 0
\end{align*}
as $\Delta x\to 0$. Thus the left hand side of \eqref{eq:weak_convergence_nu} tends to zero as $\Delta x \rightarrow 0$ for any $\phi\in C_b(\R)$ and we have shown \eqref{eq:convergence_G_2}.

As $g(X_{\Delta x}(t)) \rightarrow g(X(t))$ in $L^1(\R)$ by \eqref{eq:convergence_g_later_1}, the proof of \eqref{eq:convergence_g_weak} is completely analogous to the one of \eqref{eq:convergence_G_2}.

To show the remaining part of the theorem, recall Corollary \ref{corr:PointwiseConv}, which ensures the existence of a subsequence $\{\Delta x_m \}_{m \in \mathbb{N}}$ with $\Delta x_m \rightarrow 0$ as $m \rightarrow \infty$, such that $V_{\Delta x_m, \xi} (t) \rightarrow V_{\xi}(t)$ in $L^1(\R)$ for all $t \in N^c$, where $N\subset[0, \infty)$ is a null set. Therefore, applying a similar argument to the one used for proving  \eqref{eq:convergence_G_2}, we can establish that $\mu_{\Delta x_m}(t) \Longrightarrow \mu(t)$ for all $t\in N^c$. Choosing $\phi = 1 \in C_b(\R)$, we obtain \eqref{eq:convergence_F_2}.  Furthermore, as $C_0(\R) \subset C_b(\R)$, we have $\mu_{\Delta x_m}(t) \rightarrow \mu(t)$ vaguely for all $t \in N^c$ and \eqref{eq:convergence_F_1} holds by \cite[Prop. 7.19]{RealAnalysisFolland}. 
\end{proof}

\begin{remark}
    For each $t\in [0,\infty)$ and $\Delta x>0$,  $F_{\Delta x}(t)\in \text{BV}(\R)$. Thus,  by Helly's selection principle, see \cite[App. II.]{Billingsley}, there exists for every $t \in N$ a subsequence $\{F_{\Delta x_k}(t) \}_{k \in \mathbb{N}}$ such that $F_{\Delta x_k}(t)$ converges pointwise almost everywhere to a function of bounded variation.
\end{remark}

\begin{remark}
The projection operator $P_{\Delta x}$ is constructed with focus on accurately approximating the wave profile $u$, preserving the total energy and ensuring that 
    \begin{align*}
    d\mu_{\Delta x, \mathrm{ac}}
        = u_{\Delta x, x}^2dx.
    \end{align*}
  Therefore, the control over the size of the sets $\{x: u_x(x) \leq- \frac{2}{t} < u_{\Delta x, x}(x) < 0\}$ and $\{x: u_{\Delta x, x}(x) \leq- \frac{2}{t} < u_{x}(x) < 0\}$ is limited and as a consequence it prevents the convergence of the energy density $\mu_{\Delta x}$ for every fixed time. 
\end{remark}

\section{Numerical Experiments}\label{sec:Numex}
In this section we present three examples which highlight different challenges for the numerical algorithm. The first two examples are also considered in \cite{myMaster, NumericalConservative} and display solutions with multipeakon and cusp initial data, respectively. 
In the first example, wave breaking happens twice; initially, and also at a later time, and at each occurrence a finite amount of energy concentrates on a set of Lebesgue measure zero. For the second example on the other hand, the wave breaking times accumulate in the interval $[0, 3]$, but at every time only an infinitesimal amount of energy concentrates. The behavior of this latter example is thoroughly studied in \cite[Sec. 5.3]{myMaster}. In both cases, expressions for the exact solutions in Eulerian coordinates can be found.   

In the third example, we consider a cosine wave profile as initial data. Here, an expression for the Lagrangian solution is known, but the exact expression for $u$ in Eulerian coordinates is not available for comparison. The exact solution possesses accumulating wave breaking times, and, similarly to the cusp initial data, experiences wave breaking continuously in time, but now over the unbounded interval $[\frac{2}{\pi}, \infty)$. Furthermore, in contrast to the cusp example, the rate at which the total energy dissipates is nonlinear, making this an interesting challenge for the algorithm.

We present plots of both the time evolution and of the errors $\sup_{t \in [0,T]}\|u(t) - u_{\Delta x}(t) \|_{\infty}$ and $|F_{\infty}(T) - F_{\Delta x, \infty}(T)|$ for a chosen time $T$. In the first two examples, the errors are computed by comparing the exact and numerical solutions at the gridpoints of a uniform mesh which is a refinement of the finest mesh we use for the numerical approximations. The value of $u_{\Delta x}$ at the gridpoints is found by using \eqref{eq:piecewise_linear_recon}. As we do not have a closed form expression for the exact solution in Eulerian coordinates in the third example, Example \ref{ex:numerical_cosinus}, we instead use
\begin{align}
   \mathcal{A}_{\Delta x}(T):=\sup_{t \in [0, T]}\left(\|U(t) - U_{\Delta x}(t) \|_{\infty} + \sqrt{F_{\infty}(t)} \|y(t) - y_{\Delta x}(t)\|_{\infty}^{\frac{1}{2}} \right),
    \label{eq:quantity_rhs}
\end{align}
to measure the error introduced by the projection operator $P_{\Dx}$. Note that due to \eqref{eq:convergence_u}, this is an upper bound for the $L^\infty$-error of $u_{\Delta x}$ in Eulerian coordinates. 

\begin{example}[Multipeakon initial data]\label{ex:numerical_multipeakon}
Consider the Cauchy problem from Example \ref{ex:Example1} with $\alpha=\frac12$,  but now set $\nu_0 = \mu_0$, i.e.,  
\begin{align*}
    u_0(x) &= \begin{cases}
    0, & x < 0, \\
    x, & 0 \leq x < 1, \\
    2-x, & 1 \leq x <2, \\
    0, & 2 \leq x, \end{cases} \\
    d\mu_0 &= d\nu_0 =  \frac12\delta_{0} + u_{0,x}^2dx.
\end{align*}
The exact solution experiences wave breaking at $t=2$. 

In Figure \ref{fig:ex_1_multipeakon} the numerical solutions $(u_{\Delta x}, F_{\Delta x})$ with $\Delta x = 2.35 \cdot 10^{-1}$ and $\Delta x = 8.02 \cdot 10^{-3}$ are compared to the exact solution $(u, F)$ at $t=0$, at the wave breaking time $t=2$, and at $t=4$. 

Figure \ref{fig:ex1_convergence_order} displays the numerically computed errors. In this case the numerically computed convergence order, which is $\geq 0.99$ for both $u$ and $F$, is greater than the general order of the initial approximation error, see Proposition \ref{prop:prop_projection}. We also computed the errors using \eqref{eq:quantity_rhs}, which led to an order of $0.56$. Thus, the estimate in \eqref{eq:convergence_u} is not optimal for this example.

\begin{figure}
\centering
  \captionsetup{width=0.88\textwidth}
\begin{subfigure}{.99\textwidth}
  \centering
\includegraphics[width=0.95\textwidth]{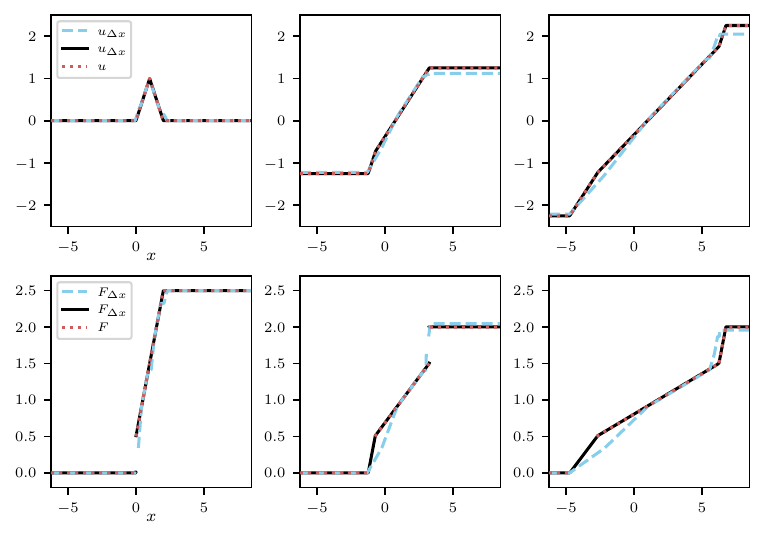}
\end{subfigure}
\caption{Time evolution of $u$ (top row, dashed red line) and $F$ (bottom row, dashed red line), and $u_{\Delta x}$ (top row) and $F_{\Delta x}$ (bottom row) for $\Delta x = 2.35 \cdot 10^{-1}$ (blue dashed line) and $\Delta x = 8.02 \cdot 10^{-3}$ (black solid line) in Example \ref{ex:numerical_multipeakon}. The times from left to right are $t=0$, $2$, $4$, and $\alpha=\frac{1}{2}$.}
\label{fig:ex_1_multipeakon}
\end{figure}

\begin{figure}
    \centering
    \captionsetup{width=0.88\textwidth}
\begin{subfigure}{.99\textwidth}
  \centering
\includegraphics[width=0.83\textwidth]{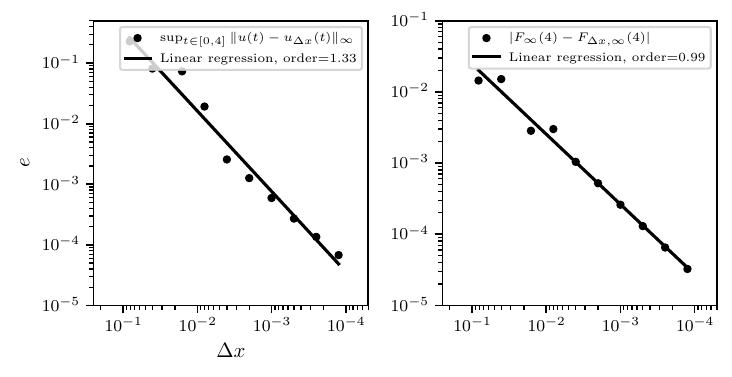}
\end{subfigure}
    \caption{ The errors $\sup_{t \in [0,4]}\|u(t) - u_{\Delta x}(t) \|_{\infty}$ (left) and $|F_{\infty}(T) - F_{\Delta x, \infty}(T)|$ at $T=4$ (right) plotted against the mesh size $\Delta x$ in Example \ref{ex:numerical_multipeakon}.}
\label{fig:ex1_convergence_order}
\end{figure} 

\end{example}

\begin{example}[Cusp initial data]\label{ex:numerical_cusp}
Let $\alpha=\frac25$ and consider the initial data
 \begin{align*}
        u_0(x) &= \begin{cases}
        1, & x < -1, \\
        |x|^{\frac{2}{3}}, & -1 \leq x \leq 1, \\
        1, & 1 < x,
        \end{cases} \\
        d\mu_0 &= d\nu_0 = u_{0, x}^2dx. 
    \end{align*}
The wave breaking times accumulate. For each $t \in [0, 3]$ an infinitesimal amount of energy concentrates.  Furthermore, $u_{0,x}$ is not in $L^{\infty}(\R)$. 

In Figure \ref{fig:ex_2_cusp_solution} $(u_{\Delta x}, F_{\Delta x})$ with $\Delta x = 1.76 \cdot 10^{-1}$ and $\Delta x = 6.01 \cdot 10^{-3}$, and the exact solution, are displayed at $t=0$, $t=\frac{3}{2}$, and $t=3$. Figure \ref{fig:ex2_convergence_order} shows the errors as we refine the mesh. We also computed the errors using \eqref{eq:quantity_rhs}, which led to an order of $0.24$. Thus, in this case the estimate in \eqref{eq:convergence_u} is optimal.

\begin{figure}
\centering
  \captionsetup{width=0.88\textwidth}
\begin{subfigure}{.99\textwidth}
  \centering
\includegraphics[width=0.95\textwidth]{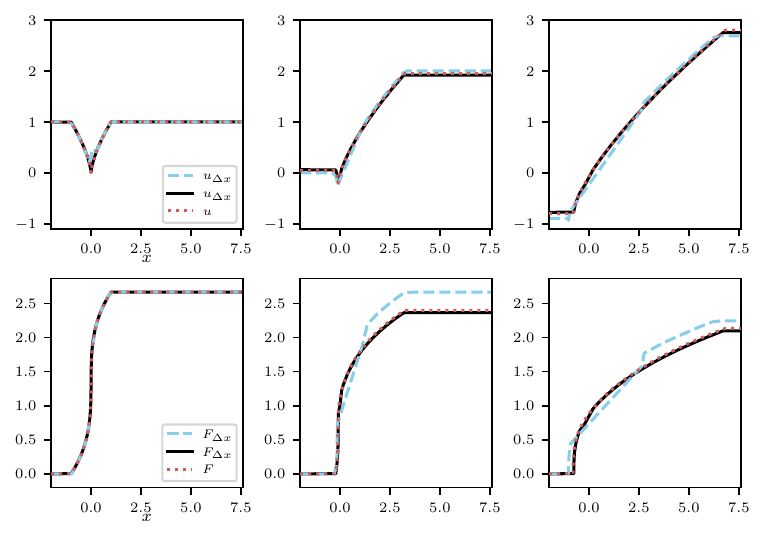}
\end{subfigure}
     \caption{Time evolution of $u$ (top row, dashed red line) and $F$ (bottom row, dashed red line), and $u_{\Delta x}$ (top row) and $F_{\Delta x}$ (bottom row) for $\Delta x = 1.76 \cdot 10^{-1}$ (blue dashed line) and $\Delta x = 6.01 \cdot 10^{-3}$ (black solid line) in Example \ref{ex:numerical_cusp}. The times from left to right are $t=0, \frac{3}{2},3$, and $\alpha=\frac{2}{5}$.}
\label{fig:ex_2_cusp_solution}
\end{figure}

\begin{figure}
    \centering
      \captionsetup{width=0.88\textwidth}
    \includegraphics[width=0.83\textwidth]{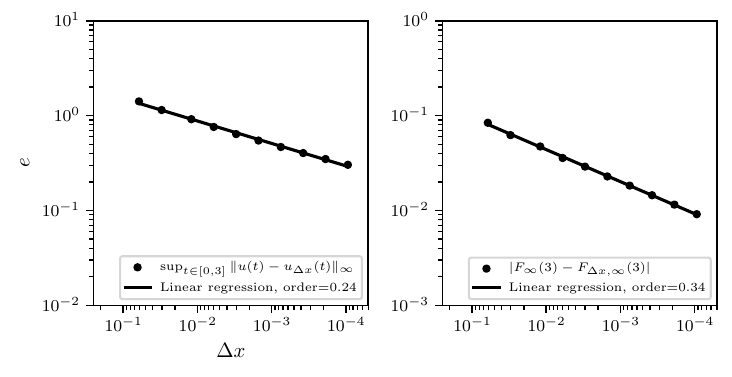}
    \caption{ The errors $\sup_{t \in [0,3]}\|u(t) - u_{\Delta x}(t) \|_{\infty}$ (left) and $|F_{\infty}(T) - F_{\Delta x, \infty}(T)|$ at $T=3$ (right) plotted against the mesh size $\Delta x$ in Example \ref{ex:numerical_cusp}.}
    \label{fig:ex2_convergence_order}
\end{figure}   
\end{example}

As can be observed in Figure \ref{fig:ex2_convergence_order} of Example \ref{ex:numerical_cusp}, the convergence order is lower than in Example \ref{ex:numerical_multipeakon}, suggesting that accumulating wave breaking times deteriorates the convergence rate of the numerical method. However, the upcoming example indicates that accumulation is of less significance than regularity of the initial data.  

\begin{example}[Cosine wave initial data]\label{ex:numerical_cosinus}
Let $\alpha=\frac35$ and consider the initial data
\begin{align*}
    u_0(x) &= \begin{cases}
    1, &x < 0, \\
    \cos(\pi x), & 0 \leq x < 4, \\
    1, & 4 \leq x, \end{cases} \\
    d\mu_0 &= d\nu_0 = u_{0, x}^2 dx = \pi^2 \sin^2(\pi x) \chi_{[0, 4)}(x)dx.
\end{align*}
Note that in contrast to the previous example, the derivative $u_{0, x}$ is Lipschitz continuous.
Moreover, for each $t \in \left(\tfrac{2}{\pi}, \infty\right)$, wave breaking occurs at four distinct isolated points both in Eulerian and Lagrangian coordinates, and happens continuously in the time interval $[\frac{2}{\pi}, \infty)$. One can compute the solution in Lagrangian coordinates exactly, as well as the total energy $F_{\infty}(t)$ for all $t \geq 0$. The Lagrangian solution is then numerically mapped into Eulerian coordinates and compared with two different numerical approximations at the times $t=0$, $\frac{2}{\pi}$, and $\frac{4}{\pi}$ in Figure \ref{fig:ex_3_sinus_simulations}.

Figure \ref{fig:ex3_convergence_order} shows the approximation errors. We observe that  the convergence order is higher than in Example \ref{ex:numerical_cusp}, although the wave breaking times still accumulate. 

\begin{figure}
\centering
 \captionsetup{width=0.88\textwidth}
\begin{subfigure}{.99\textwidth}
  \centering
\includegraphics[width=0.95\textwidth]{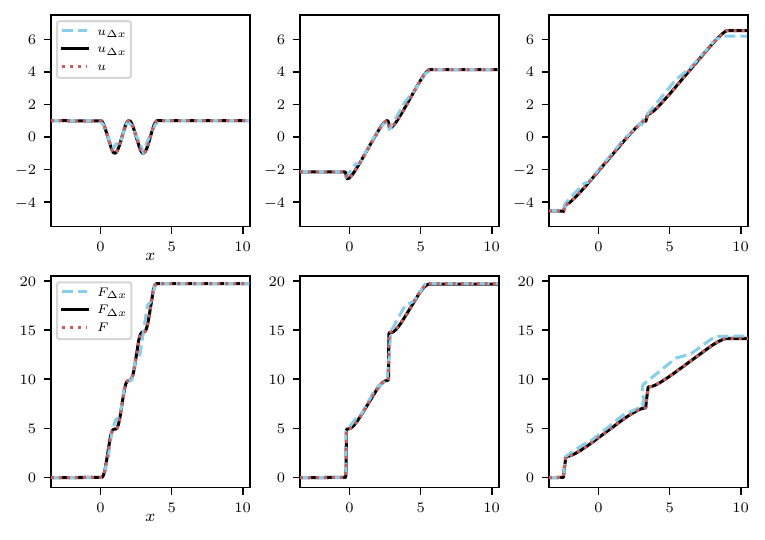}
\end{subfigure}
    \caption{Time evolution of $u$ (top row, dashed red line) and $F$ (bottom row, dashed red line), and $u_{\Delta x}$ (top row) and $F_{\Delta x}$ (bottom row) for $\Delta x = 9.59 \cdot 10^{-2}$ (blue dashed line) and $\Delta x = 5.88 \cdot 10^{-4}$ (black solid line) in Example \ref{ex:numerical_cosinus}. The times from left to right are $t=0, \frac{2}{\pi}, \frac{4}{\pi}$, and $\alpha=\frac{3}{5}$.}
    \label{fig:ex_3_sinus_simulations}
\end{figure}

\begin{figure}
    \centering
      \captionsetup{width=0.88\textwidth}
\includegraphics[width=0.83\textwidth]{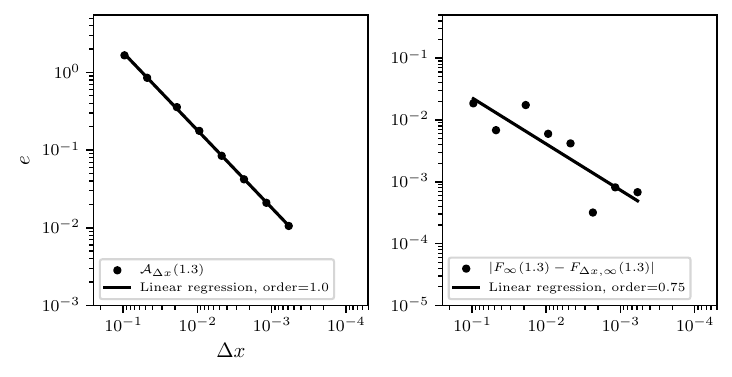}
    \caption{ The errors $\mathcal{A}_{\Delta x}(1.3)$, see \eqref{eq:quantity_rhs}, (left) and $|F_{\infty}(T) - F_{\Delta x, \infty}(T)|$ at $T=1.3$ (right) plotted against the mesh size $\Delta x$ in Example \ref{ex:numerical_cosinus}.}
    \label{fig:ex3_convergence_order}
\end{figure}
\end{example}

\bibliographystyle{plain}

\appendix

\section{Exact solutions to the examples in Section~\ref{sec:Numex}}\label{sec:Appendix}

For completeness, we present the exact solutions, in both Eulerian and Lagrangian coordinates, to Example~\ref{ex:numerical_multipeakon} and \ref{ex:numerical_cusp}. 

\subsection{Multipeakon initial data} \label{subsec:multipeakon}
Let $\alpha\in [0,1]$, $\beta\in \mathbb{R}^+$, and consider the initial data 
\begin{align*}
    u_0(x) &= \begin{cases}
    0, & x \leq 0, \\
    x, & 0 < x \leq 1, \\
    2-x, & 1 < x \leq 2, \\
    0, & 2 < x, \end{cases} \\
    d\mu_0 &= d\nu_0 =  \beta d\delta_{0} + u_{0,x}^2dx.
\end{align*}

We map the initial data to Lagrangian coordinates by applying the mapping $L$ from Definition~\ref{def:MappingL} yielding 
\begin{align*}
    y_0(\xi) &= \begin{cases}
    \xi, & \xi \leq 0, \\
    0, & 0 < \xi \leq \beta , \\
    \frac{1}{2}\xi -\frac12 \beta, & \beta < \xi \leq 4+\beta, \\
    \xi - (2+\beta), & 4+\beta \leq \xi, \end{cases} \\
    U_0(\xi) &= \begin{cases}
    0, & \xi \leq \beta , \\
    \frac{1}{2}\xi -\frac12\beta,  & \beta < \xi \leq 2+\beta, \\
    -\frac12 \xi+\frac12(4+\beta), & 2+\beta < \xi \leq 4+\beta, \\ 
    0, & 4+\beta < \xi, \end{cases} \\
    V_0(\xi) &= H_0(\xi) = \begin{cases}
    0, & \xi \leq 0, \\
    \xi, & 0 < \xi \leq \beta , \\
    \frac{1}{2}\xi+ \frac12\beta, & \beta  < \xi \leq 4+\beta, \\
    2+\beta , & 4+\beta < \xi. \end{cases} 
\end{align*}
Using \eqref{eq:waveBreakingFunction_Lagrangian}, we find
\begin{align*}
    \tau(\xi) &= \begin{cases}
    0, & \xi \in (0, \beta), \\ 
    2, & \xi \in (2+\beta, 4+\beta), \\ 
    \infty, & \text{otherwise}. \end{cases}
\end{align*}
Thus, for $t\in (0, 2)$ no wave breaking occurs, and the solution for $t<2$ is found by solving \eqref{eq:integrated_ODE} with $V(t, \xi) = V_0(\xi)$. In particular, 
\begin{align*}
    y(t, \xi) &= \begin{cases}
    \xi - \frac{1}{8} \left( 2+\beta\right) t^2, & \xi \leq 0, \\
    \frac{1}{4} \xi t^2 - \frac{1}{8} \left (2+\beta\right)t^2, & 0 < \xi \leq \beta, \\
    \frac{1}{8}\left(t + 2\right)^2\xi -\frac12\beta-\frac12\beta t-\frac14 t^2 , & \beta < \xi \leq 2+\beta, \\
    \frac{1}{8} \left(t-2 \right)^2\xi -\frac12 \beta+\frac12\left(4+\beta\right) t-\frac14 t^2, & 2+\beta < \xi \leq 4+\beta, \\
    \xi -(2+\beta)+\frac18 \left(2+\beta\right) t^2, & 4+\beta \leq \xi, \end{cases} \\
    U(t, \xi) &= \begin{cases}
    - \frac{1}{4}\left(2+\beta \right)t, & \xi \leq 0, \\
    \frac{1}{2}\xi t - \frac{1}{4} \left(2+\beta\right)t, & 0 < \xi \leq \beta, \\
    \frac{1}{4}\left(t + 2 \right)\xi -\frac12 \beta-\frac12 t, & \beta < \xi \leq 2+\beta, \\
    \frac{1}{4} \left(t-2 \right)\xi+\frac12 \left(4+\beta\right)-\frac12 t,& 2+\beta < \xi \leq 4+\beta, \\
    \frac{1}{4} \left (2+\beta \right) t, & 4+\beta < \xi, \end{cases} \\
    V(t, \xi) &= H(t, \xi) = V_0(\xi). 
\end{align*}

By Definition~\ref{def:MappingM}, we deduce that for $t\in [0,2)$  
\begin{align*}
    u(t, x) &= \begin{cases}
    - \frac{1}{4} \left(2+\beta \right)t, & x \leq - \frac{1}{8}\left(2+\beta \right) t^2, \\
    \frac{2x}{t}, & - \frac{1}{8}\left(2+\beta \right) t^2 < x \leq -\frac{1}{8}\left(2-\beta \right)t^2, \\
    \frac{4x - \left(2-\beta \right)t}{2 \left(t + 2 \right)}, & -\frac{1}{8}\left(2-\beta\right)t^2 < x \leq 1+t+\frac18 \beta t^2, \\
    \frac{4x-8 - (2+\beta)t}{2(t-2)}, & 1+t+\frac18\beta t^2 < x \leq 2+\frac18 \left(2+\beta\right)t^2, \\
    \frac{1}{4}(2+\beta)t, & 2+\frac{1}{8}\left(2+\beta \right)t^2 < x, \end{cases}
\end{align*}
and 
\begin{align*}
    F(t, x) &=  \begin{cases}
    0, & x \leq - \frac{1}{8}\left(2+\beta \right) t, \\
    \frac{8x+\left(2+\beta\right) t^2}{2t^2}, & - \frac{1}{8}\left(2+\beta \right) t^2 < x \leq -\frac{1}{8}\left(2-\beta \right)t^2, \\
    \frac{8x+8\beta +8\beta t+(2+\beta)t^2}{2 \left(t+2 \right)^2}, & -\frac{1}{8}\left(2-\beta \right)t^2 < x\leq 1+t+\frac18 \beta t^2, \\
    \frac{8x + 8\beta-8\left(2+\beta\right) t+\left(2+\beta\right)t^2}{2(t-2)^2}, & 1+t+\frac18 \beta t^2 < x \leq 2+\frac18\left(2+\beta\right) t^2, \\
    2+\beta, &2+ \frac{1}{8}\left(2+\beta \right)t^2  < x.
    \end{cases}
\end{align*}
Note that $F$ and $G$ coincide prior to wave breaking which occurs at $t=2$. At $t=2$, all the energy confined within the interval $(2+\beta, 4+\beta)$ in Lagrangian coordinates concentrates at the point $x = \frac{1}{2}(6+\beta)$. 

For $t \geq 2$ the Lagrangian energy takes the form
\begin{align*}
    V(t, \xi) &= \begin{cases}
      0,  &\xi \leq 0, \\
      \xi, & 0 < \xi \leq \beta, \\
      \frac{1}{2} \xi + \frac12\beta, & \beta < \xi \leq 2+\beta, \\
      \frac{1}{2}\left(1-\alpha \right)\xi + \frac{1}{2}\left(\beta+2\alpha+\alpha\beta \right), & 2+\beta < \xi \leq  4+\beta, \\
      2+\beta-\alpha, & 4+\beta < \xi, \end{cases}
\end{align*}
and by using
\begin{align*}
    y(t, \xi) &= y(2, \xi) + \int_2^tU(s, \xi)ds, \\ 
    U(t, \xi) &= U(2, \xi) + \frac{1}{2} \int_2^t \left(V(s, \xi) - \frac{1}{2}V_{\infty}(s) \right)ds, 
\end{align*}
we find
\begin{align*}
     y(t, \xi) &= \begin{cases}
    \xi +\frac12\alpha-\frac12 \alpha t-\frac18 \left(2+\beta-\alpha\right)t^2, & \xi \leq 0, \\
    \frac{1}{4}t^2 \xi +\frac12 \alpha-\frac12 \alpha t-\frac18 \left(2+\beta-\alpha\right) t^2, & 0 < \xi \leq \beta, \\
    \frac{1}{8} \left(t+2 \right)^2 \xi -\frac12 (\beta-\alpha)-\frac12 (\beta+\alpha) t-\frac18 (2-\alpha)t^2, & \beta < x \leq 2+\beta, \\
    \frac{1}{8}\left(1 - \alpha \right) \left(t-2 \right)^2 \xi -\frac12 (\beta-3\alpha-\alpha\beta)\\ \quad +\frac12 (4+\beta-3\alpha-\alpha\beta)t 
    -\frac18(2-3\alpha-\alpha\beta) t^2, & 2+\beta < \xi \leq 4+\beta, \\
    \xi -\frac12 (4+2\beta+\alpha)+\frac12 \alpha t+\frac18 \left(2+\beta-\alpha\right)t^2, & 4+\beta < \xi,
    \end{cases} \\
    U(t, \xi) &= \begin{cases}
     - \frac{1}{2}\alpha-\frac14\left(2+\beta-\alpha\right)t, & \xi \leq 0, \\
    \frac{1}{2}\xi t  - \frac{1}{2}\alpha-\frac14 \left(2+\beta-\alpha\right) t, & 0 < \xi \leq \beta, \\
    \frac{1}{4}\left(t+2 \right)\xi -\frac12 (\beta+\alpha)-\frac14 (2-\alpha)t, & \beta < \xi \leq 2+\beta, \\
    \frac{1}{4}\left(1-\alpha \right)\left(t-2 \right)\xi +\frac12(4+\beta-3\alpha-\alpha\beta)\\ \quad -\frac14(2-3\alpha-\alpha\beta)t, & 2+\beta < \xi \leq 4+\beta, \\
     \frac{1}{2}\alpha+\frac14 \left(2+\beta-\alpha\right)t, & 4+\beta < \xi.\end{cases}
\end{align*}

Thus, we find for $t \geq 2$ the following Eulerian solution  
\begin{align*}
    u(t, x) &= \begin{cases}
   -\frac12 \alpha - \frac{1}{4} \left(2+\beta-\alpha \right)t , & x \leq x_1(t), \\
    \frac{4x  -2\alpha+\alpha t}{2t}, & x_1(t) < x \leq x_2(t) \\
    \frac{4x -4\alpha-(2-\beta-2\alpha)t}{2(t+2)}, & x_2(t) < x \leq x_3(t) , \\
    \frac{4x - 8-(2+\beta)t}{2(t-2)}, & x_3(t) < x \leq x_4(t) , \\
    \frac{1}{2}\alpha+\frac14 (2+\beta-\alpha)t, & x_4(t) < x,
    \end{cases} \\
    F(t, x) &= \begin{cases}
    0, & x \leq x_1(t), \\
    \frac{8x-4\alpha+4\alpha t+(2+\beta-\alpha) t^2}{2t^2}, & x_1(t) < x \leq x_2(t), \\
    \frac{8x +4(2\beta-\alpha)+4(2\beta+\alpha)t+(2+\beta-\alpha)t^2}{2\left(t+2 \right)^2}, & x_2(t) < x \leq x_3(t), \\
    \frac{8x +4(2\beta-\alpha)-4(4+2\beta-\alpha) t+(2+\beta-\alpha)t^2}{2 \left(t-2 \right)^2}, & x_3(t) < x \leq x_4(t), \\
    2+\beta-\alpha, & x_4(t) < x,
    \end{cases}
\end{align*}
where we introduced
\begin{align*}
    x_1(t) &= \frac12 \alpha-\frac12 \alpha t -\frac18 (2+\beta-\alpha) t^2, \\
    x_2(t) &= \frac12 \alpha-\frac12 \alpha t-\frac18 (2-\beta-\alpha)t^2, \\
    x_3(t) &= \frac12(2+\alpha) +\frac12 (2-\alpha)t+\frac18 (\beta+\alpha) t^2, \\ 
    x_4(t) &= \frac12 (4-\alpha) +\frac12 \alpha t+\frac18 (2+\beta-\alpha)t^2. 
\end{align*}

\subsection{Cusp initial data}\label{subsec:cusp}
Consider the initial data given in Example \ref{ex:numerical_cusp}. We observe that $u_{0, x}(x) \rightarrow - \infty$ as $x \uparrow 0$. Thus wave breaking occurs at $x=0$ initially. However, there is only an infinitesimal amount of energy concentrating. 

Applying $L$ from Definition~\ref{def:MappingL}, we find the following Lagrangian initial data 
    \begin{align*}
        y_0(\xi) &= \begin{cases}
        \xi, & \xi \leq - 1, \\
        \bar{y}(\xi), & -1 < \xi \leq \frac{11}{3}, \\
        \xi - \frac{8}{3}, & \frac{11}{3} < \xi, \end{cases} \\
        U_0(\xi) &= \begin{cases}
        1, & \xi \leq -1, \\
        \bigl|\bar{y}(\xi) \bigr|^{\frac{2}{3}}, & -1 < \xi \leq \frac{11}{3}, \\
        1, & \frac{11}{3} < \xi, \end{cases} \\
        V_0(\xi) &= \begin{cases}
        0, & \xi \leq -1, \\
        \frac{4}{3}\left(\sgn(\bar{y}(\xi)) \bigl|\bar{y}(\xi) \bigr|^{\frac{1}{3}} +1 \right), & -1 < \xi \leq  \frac{11}{3}, \\
        \frac{8}{3}, & \frac{11}{3} < \xi, \end{cases} 
    \end{align*}
 where $\bar{y}(\xi)$ is implicitly given by 
 \begin{equation}\label{eq:implby}
 \bar y(\xi) + \frac{4}{3}\left(1+\sgn(\bar y(\xi))\bigl |\bar y(\xi) \bigr|^{\frac{1}{3}}\right)=\xi.
 \end{equation}
  In particular, $\bar y$ is continuous and strictly increasing on $[-1, \frac{11}3]$ and 
 \begin{equation*}
 \bar y(-1)=-1,\quad  \bar y\left(\frac43\right)=0,\quad  \text{ and  } \quad \bar y\left(\frac{11}{3}\right)=1.
 \end{equation*}

The wave breaking function, given by  \eqref{eq:waveBreakingFunction_Lagrangian}, reads 
    \begin{align}
        \tau(\xi) &= - 2\frac{y_{ 0, \xi}}{U_{0, \xi}}(\xi) =\begin{cases}
        3 \bigl| \bar{y}(\xi) \bigr|^{\frac{1}{3}}, & \xi \in (-1, \frac43], \\
        \infty, & \text{otherwise.}
        \end{cases}
\label{eq:example_wave_breaking_function}
    \end{align}
Note that to each  $t\in (0,3]$, there exists exactly one $\xi(t)\in (-1, \frac43]$ such that $\tau(\xi)=t$, since $\bar{y}$ is strictly increasing on $[-1, \frac43]$. Furthermore, setting $t=\tau(\xi)$ and rewriting \eqref{eq:example_wave_breaking_function}, we find 
\begin{align}
        y_0\left(\xi(t)\right) &= \bar{y}\left(\xi\left(t \right)\right) = - \left(\frac{t}{3}\right)^3.
        \label{eq:example_wave_breaking_characteristics}
    \end{align}
Thus, combining \eqref{eq:implby} and \eqref{eq:example_wave_breaking_characteristics}, the wave breaking curve $\xi(t)$ in Lagrangian coordinates is given by 
\begin{align}
    \xi(t) &= -\left(\frac{t}{3}\right)^3 + \frac{4}{3}\left(1 - \frac{t}{3} \right).
    \label{eq:example_wave_breaking_curve}
\end{align}

We now compute $V(t, \xi)$ for $0\leq t \leq3$. The energy is unchanged for the characteristics labeled by $\xi < \xi(t)$, as along these no wave breaking has occurred yet. In particular, this means that $V(t, \xi) = V_0(\xi)$ for $\xi \in( -\infty,\xi(t))$. 

For $\xi\in[\xi(t), \frac43]$, we find, using \eqref{eq:integrated_ODE3}, 
\begin{align*}
        V(t, \xi) & = V_0(\xi) - \alpha \int_{\xi(t)}^{\xi}V_{0, \xi}(\eta)d\eta \nonumber \\ &= V_0(\xi) - \alpha \int_{\xi(t)}^{\xi} \frac{4}{9}\bigl |\bar{y}(\eta)\bigr|^{-\frac{2}{3}}\bar{y}_{\xi}(\eta)d\eta   
     \nonumber \\ &= \frac{4}{3}\bigg( - \big(1 - \alpha \big)\bigl |\bar{y}(\xi)\bigr |^{\frac{1}{3}} +1- \frac{1}{3}\alpha t  \bigg) .
\end{align*}

Since $V(t, \frac43)=V_0(\frac43)-\frac49 \alpha t$ and no wave breaking occurs for $\xi \in (\frac43, \infty)$, one has $V(t,\xi)=V_0(\xi)-\frac49\alpha t$ for $\xi\in(\frac43,\infty)$. To summarize, for $0\leq t\leq3$, 
    \begin{align}
        V(t, \xi) &= \begin{cases}
        0, & \xi \leq -1 \\
        \frac{4}{3} \big(-|\bar{y}(\xi)|^{\frac{1}{3}} + 1 \big), & -1 < \xi < \xi(t), \\
         \frac{4}{3} \big( - (1-\alpha) \bigl|\bar{y}(\xi) \bigr |^{\frac{1}{3}} +1- \frac{1}{3}\alpha t \big) , &  \xi(t) \leq \xi \leq \frac{4}{3}, \\
        \frac{4}{3} \big(|\bar{y}(\xi)|^{\frac{1}{3}} +1- \frac{1}{3}\alpha t  \big), & \frac{4}{3} < \xi \leq \frac{11}{3}, \\
        \frac{4}{3}\big(2 - \frac{1}{3}\alpha t \big), & \frac{11}{3} < \xi. 
        \end{cases}
        \label{eq:example_V}
    \end{align}
    
 Next we compute $U(t,\xi)$ and $y(t,\xi)$ for $t\in [0,3]$, using \eqref{eq:integrated_ODE1} and \eqref{eq:integrated_ODE2}. We only show the details for $\xi\in (-1, \frac{4}3]$. For $\xi\in(-1, \xi(t)]$, combining \eqref{eq:integrated_ODE} and \eqref{eq:example_V} yields
   \begin{align}
     U(t, \xi) &= \left(- \bigl |\bar{y}(\xi) \bigr |^{\frac{1}{3}} + \frac{t}{3} \right)^2 - \frac{1}{18} \left(2 - \alpha \right)t^2,
        \label{eq:U_left_breaking_curve}
 \end{align}
and
\begin{align}
    y(t, \xi)
        &= \left( -\bigl|\bar{y}(\xi) \bigr|^{\frac{1}{3}} + \frac{t}{3} \right)^3 - \frac{1}{54} \big(2 - \alpha)t^3. \label{eq:y_left_breaking_curve}
\end{align} 

For $\xi \in [\xi(t), \frac43]$ wave breaking has already occured. Thus, instead of integrating \eqref{eq:integrated_ODE} from $0$ to $t$, we integrate from $\tau(\xi)$ to $t$. Furthermore, as $U$ and $y$ are continuous in time, we get from \eqref{eq:U_left_breaking_curve} and \eqref{eq:y_left_breaking_curve} that 
\begin{equation*}
U(\tau(\xi), \xi) = - \frac{1}{18}(2 - \alpha)\tau^2(\xi),\quad \text{ and }\quad y(\tau(\xi), \xi) = - \frac{1}{54}(2 - \alpha)\tau^3(\xi).
\end{equation*}
Consequently, for $\xi \in [\xi(t), \frac{4}{3}]$, we find
\begin{align*}
    U(t, \xi) & = - \frac{1}{18}(2-\alpha)\tau^2(\xi) - \frac{2}{3} \int_{\tau(\xi)}^t \left( (1 - \alpha) \left |\bar{y}(\xi) \right|^{\frac{1}{3}} + \frac{1}{6}\alpha s \right) ds\\
        &= \frac{1}{9}(1 - \alpha) \left(\tau(\xi) - t \right)^2 - \frac{1}{18}(2 - \alpha)t^2,
\end{align*}
where we used that $\left| \bar{y}(\xi) \right|^{\frac{1}{3}} = \frac{1}{3}\tau(\xi)$ by \eqref{eq:example_wave_breaking_function}. For $y(t, \xi)$, we obtain
\begin{align*}
    y(t, \xi)  &=  -\frac{1}{54}(2-\alpha)\tau^3(\xi) + \int_{\tau(\xi)}^t \left(\frac{1}{9}(1 - \alpha) \left(\tau(\xi) - s\right)^2 - \frac{1}{18}(2 - \alpha)s^2\right)ds \\
        &= -\frac{1}{54}\left(2 - \alpha \right)t^3 - \frac{1}{27}(1 - \alpha) \left(\tau(\xi) - t\right)^3. 
\end{align*}

Following the same lines for all remaining $\xi$, yields for $t \in [0, 3)$,  
 \begin{subequations}
     \begin{align}
        U(t, \xi) &= \begin{cases}
        1-\frac23 t+\frac{1}{18}\alpha t^2, & \xi \leq -1, \\
        \left(-\bigl|\bar{y}(\xi) \bigr|^{\frac{1}{3}} + \frac{t}{3} \right)^2 - \frac{1}{18} \left(2 - \alpha \right)t^2, & -1 < \xi < \xi(t), \\
        \frac{1}{9}(1 - \alpha) \left(3 \bigl |\bar{y}(\xi) \bigr|^{\frac{1}{3}} - t \right)^2 - \frac{1}{18}(2 - \alpha)t^2, & \xi(t) \leq \xi \leq \frac{4}{3}, \\
        \left(\bigl |\bar{y}(\xi) \bigr|^{\frac{1}{3}} + \frac{t}{3} \right)^2 - \frac{1}{18} \left(2 + \alpha \right)t^2, & \frac{4}{3} < \xi \leq \frac{11}{3}, \\
        1+\frac23 t- \frac{1}{18} \alpha t^2 , & \frac{11}{3} < \xi, \end{cases} \label{eq:example_U}\\ 
        y(t, \xi) &= \begin{cases}
        \xi +t-\frac13 t^2+ \frac{1}{54}\alpha t^3 , & \xi \leq -1, \\
        \left(-\bigl|\bar{y}(\xi) \bigr|^{\frac{1}{3}} + \frac{t}{3} \right)^3 - \frac{1}{54} \left(2 - \alpha \right)t^3, & -1 < \xi < \xi(t), \\
        - \frac{1}{27} \left(1 - \alpha \right) \left(3 \bigl |\bar{y}(\xi) \bigr|^{\frac{1}{3}} - t \right)^3 - \frac{1}{54}(2 - \alpha)t^3, & \xi(t) \leq \xi \leq \frac{4}{3}, \\
        \left(\bigl| \bar{y}(\xi) \bigr|^{\frac{1}{3}} + \frac{t}{3} \right)^3 - \frac{1}{54}(2 + \alpha)t^3, & \frac{4}{3} < \xi \leq \frac{11}{3}, \\
        \xi -\frac83+t+\frac13 t^2- \frac{1}{54}\alpha t^3 , & \frac{11}{3} < \xi, \label{eq:example_y} \end{cases}
    \end{align}
 \end{subequations}
 where $\xi(t)$ is given by \eqref{eq:example_wave_breaking_curve}. 

A closer look at the above expressions reveals that it is possible to compute $u(t,x)$ and $F(t,x)$ with the help of Definition~\ref{def:MappingM}. We will only detail this step for $\xi\in (-1, \frac43]$. 

For $\xi\in (-1, \xi(t)]$, we find by rearranging, \eqref{eq:example_y} 
\begin{equation*}
   -\left |\bar{y}(\xi) \right |^{\frac{1}{3}} + \frac{t}{3}  = \left( y(t,\xi) + \frac{1}{54}(2 - \alpha)t^3 \right)^{\frac{1}{3}},
\end{equation*} 
which implies, recalling  \eqref{eq:example_U} and \eqref{eq:example_V},
\begin{align*}
u(t,y(t,\xi))&=U(t,\xi)= \left(y(t,\xi) + \frac{1}{54}(2-\alpha)t^3 \right)^{\frac{2}{3}} - \frac{1}{18}(2-\alpha)t^2,\\
F(t,y(t,\xi))&=V(t,\xi)=\frac{4}{3} \left( \left(y(t,\xi) + \frac{1}{54} (2 - \alpha)t^3 \right)^{\frac{1}{3}} +1- \frac{t}{3} \right).
\end{align*}
Thus,
\begin{align*}
        u(t, x) &= \left(x + \frac{1}{54}(2-\alpha)t^3 \right)^{\frac{2}{3}} - \frac{1}{18}(2-\alpha)t^2, \\
        F(t, x) &= \frac{4}{3} \left( \left(x + \frac{1}{54} (2 - \alpha)t^3 \right)^{\frac{1}{3}} +1- \frac{t}{3}  \right),
    \end{align*}
for $x\in (y(t, -1), y(t, \xi(t))] = (-1+t-\frac13t^2+\frac{1}{54}\alpha t^3, - \frac{1}{54}(2 - \alpha)t^3]$, where we inserted \eqref{eq:example_wave_breaking_characteristics} into \eqref{eq:example_y} to determine the right endpoint of this interval. 

For $\xi \in (\xi(t), \frac{4}{3}]$, we obtain, using \eqref{eq:example_y},
\begin{align*}
     (\tau(\xi) - t) &= -3\left( \frac{1}{\left(1-\alpha \right)} \left(y(t,\xi) + \frac{1}{54}(2 - \alpha)t^3 \right) \right)^{\frac{1}{3}},
\end{align*}
which yields, following the same lines as before,
\begin{align*}
     u(t, x) &= \left(1- \alpha \right)^{\frac{1}{3}}\left(x + \frac{1}{54}(2-\alpha)t^3 \right)^{\frac{2}{3}} - \frac{1}{18}(2 - \alpha)t^2, \\
    F(t, x) &= \frac{4}{3} \left(\big(1 - \alpha)^{\frac{2}{3}} \left(x + \frac{1}{54}(2 - \alpha)t^3  \right)^{\frac{1}{3}}+1 - \frac{t}{3}  \right),
\end{align*}
for $x\in (y(t, \xi(t)), y(t, \frac{4}{3})] = (- \frac{1}{54}(2 - \alpha)t^3, - \frac{1}{54}\alpha t^3]$. 

Considering all $\xi\in \mathbb{R}$, the exact solution in Eulerian coordinates for $t \in [0, 3)$ is given by 
\begin{align*}
        u(t, x) &= \begin{cases}
        1 - \frac{2}{3}t + \frac{1}{18}\alpha t^2, & x \leq x_1(t), \\
        \left(x + \frac{1}{54}(2 - \alpha)t^3 \right)^{\frac{2}{3}} - \frac{1}{18}(2 - \alpha)t^2, & x_1(t) < x \leq x_2(t), \\
       \left(1- \alpha \right)^{\frac{1}{3}}\left(x + \frac{1}{54}(2-\alpha)t^3 \right)^{\frac{2}{3}} - \frac{1}{18}(2 - \alpha)t^2 , &  x_2(t) < x \leq -\frac{1}{54}\alpha t^3, \\
        \left(x + \frac{1}{54}(2 + \alpha)t^3 \right)^{\frac{2}{3}} - \frac{1}{18}(2 + \alpha)t^2, & -\frac{1}{54}\alpha t^3 < x \leq x_3(t), \\
        1 + \frac{2}{3}t - \frac{1}{18}\alpha t^2, & x_3(t) < x, \end{cases} \\
        F(t, x) &= \begin{cases}
        0, & x \leq x_1(t), \\
        \frac{4}{3} \left(  \left(x + \frac{1}{54}(2 - \alpha)t^3 \right)^{\frac{1}{3}} +1-\frac{t}{3}\right), & x_1(t) < x \leq x_2(t), \\
        \frac{4}{3} \left( (1 - \alpha)^{\frac{2}{3}} \left(x + \frac{1}{54}(2 - \alpha)t^3 \right)^{\frac{1}{3}} +1-\frac{t}{3}\right), & x_2(t) < x \leq -\frac{1}{54}\alpha t^3, \\
        \frac{4}{3} \left(  \left(x + \frac{1}{54}(2 + \alpha)t^3 \right)^{\frac{1}{3}} +1-(1+\alpha)\frac{t}{3}\right), & -\frac{1}{54}\alpha t^3 < x \leq x_3(t), \\
        \frac{8}{3} - \frac{4}{9} \alpha t, &  x_3(t)< x, \end{cases} 
    \end{align*}
    where we introduced 
    \begin{align*}
        x_1(t) &=  -1 + t - \frac{1}{3}t^2 + \frac{1}{54}\alpha t^3, \\
        x_2(t) &= -\frac{1}{54}(2-\alpha)t^3, \\
        x_3(t) &= 1 + t + \frac{1}{3}t^2 - \frac{1}{54}\alpha t^3.
    \end{align*}

It is possible to compute the solution for $t> 3$ as well, but this is left to the interested reader. Since no more wave breaking happens for $t>3$, $V(t,\xi)=V(3,\xi)$ for all $\xi\in \mathbb{R}$. Furthermore, $y(3,\xi)$ and $U(3,\xi)$ can be computed using \eqref{eq:example_y} and \eqref{eq:example_U}. It then remains to solve \eqref{eq:integrated_ODE}.

\subsection{Cosine initial data}\label{subsec:cosinus}
Consider the initial data in Example \ref{ex:numerical_cosinus}. We map the data to Lagrangian coordinates by applying $L$ from Definition~\ref{def:MappingL}, resulting in 
\begin{align*}
    y_0(\xi) &= \begin{cases}
        \xi, & \xi \leq 0, \\ 
        \bar{y}(\xi), & 0 < \xi \leq 4 + 2\pi^2, \\ 
        \xi - 2\pi^2, & 4 + 2\pi^2 < \xi, 
    \end{cases} \\ 
    U_0(\xi) &= \begin{cases}
        1, & \xi \leq 0, \\
        \cos(\pi \bar{y}(\xi)), & 0 < \xi \leq 4 + 2\pi^2, \\
        1, & 4 + 2\pi^2 < \xi,  
    \end{cases} \\
    V_0(\xi) &= H_0(\xi) = \begin{cases}
        0, & \xi \leq 0, \\
        \frac{\pi}{4}(2\pi \bar{y}(\xi) - \sin(2\pi \bar{y}(\xi))) & 0 < \xi \leq 4 + 2\pi^2, \\
        2\pi^2, & 4 + 2\pi^2 < \xi, 
    \end{cases}
\end{align*}
where $\bar{y}(\xi)$ is implicitly defined by 
\begin{equation}
    \frac{1}{2} (2 + \pi^2)\bar{y}(\xi) - \frac{\pi}{4}\sin(2\pi \bar{y}(\xi)) = \xi. 
    \label{eq:cos_implicit}
\end{equation}
Thus, $\bar{y}$ is continuous and strictly increasing on $[0, 4 + 2\pi^2]$ and 
\begin{equation*}
    \bar{y} \big(k + \tfrac{k}{2}\pi^2 \big) = k, \hspace{0.2cm} k \in \{0, 1, 2, 3, 4\}.
\end{equation*}
We compute the wave breaking function using \eqref{eq:waveBreakingFunction_Lagrangian}, yielding
\begin{align}
    \tau(\xi) = - 2 \frac{y_{0, \xi}}{U_{0, \xi}}(\xi) = \begin{cases}
    \frac{2}{\pi \sin \left(\pi \bar{y}(\xi) \right)}, & \xi \in \big(0, 1 + \tfrac{1}{2}\pi^2 \big) \cup \big(2 + \pi^2, 3 + \tfrac{3}{2}\pi^2 \big), \\
    \infty, & \text{otherwise}. \end{cases}
    \label{eq:cos_tau}
\end{align}
Consequently, there is no wave breaking before $t=\frac{2}{\pi}$, and we can therefore compute the solution for $t < \frac{2}{\pi}$ by solving \eqref{eq:integrated_ODE} with $V(t, \xi) = V_0(\xi)$. Hence, 
\begin{align}
    y(t, \xi) &= \begin{cases}
    \xi + t - \frac{\pi^2}{4}t^2, & \xi < 0, \\
    \bar{y}(\xi) + \cos(\pi \bar{y}(\xi)) t \\ \quad + \frac{\pi}{16} \left(2\pi \bar{y}(\xi) - 4\pi - \sin (2 \pi \bar{y}(\xi)) \right)t^2, & 0 \leq \xi < 4 + 2\pi^2, \\
    \xi - 2\pi^2 + t + \frac{\pi^2}{4}t^2, & 4 + 2\pi^2 \leq \xi, \end{cases} \nonumber 
    \\ 
    U(t, \xi) &= \begin{cases}
    1 - \frac{\pi^2}{2}t, & \xi < 0, \\
    \cos(\pi \bar{y}(\xi)) \\ \quad + \frac{\pi}{8} \left(2\pi \bar{y}(\xi) - 4\pi - \sin(2\pi \bar{y}(\xi)) \right)t, & 0 \leq \xi < 4 + 2 \pi^2, \\
    1 + \frac{\pi^2}{2}t, & 4 + 2\pi^2 \leq \xi, \end{cases} \nonumber 
    \\
    V(t, \xi) &= H(t, \xi) = V_0(\xi). \label{eq:cos_sol_before}
\end{align}

At $t= \frac{2}{\pi}$ wave breaking occurs at the two points $\xi_1 = \frac{1}{4}(2+\pi^2)$ and $\xi_2= \frac{5}{4}(2 + \pi^2)$. Thereafter, for every $t \in (\frac{2}{\pi}, \infty)$ there are precisely four values of $\xi$ such that $\tau(\xi) = t$. 

The wave breaking curve starting at $\frac{1}{4}(2+\pi^2)$, splits into two parts, one moving to the left ending up at $\xi=0$ as $t \rightarrow \infty$, and one curve moving to the right tending towards $\xi=(1 + \tfrac{1}{2}\pi^2)$ as $t \rightarrow \infty$. Setting $t = \tau(\xi)$, denoting these two curves in Lagrangian coordinates by $\xi_{1, \textrm{l}}(t)$ and $\xi_{1, \textrm{r}}(t)$, respectively, and rewriting \eqref{eq:cos_tau}, leads to 
\begin{align}
    \bar{y} \left(\xi_{1, \textrm{l}}(t) \right) &= \frac{1}{\pi}\arcsin \left( \frac{2}{\pi t} \right), \nonumber \\
    \bar{y}\left(\xi_{1, \textrm{r}}(t)\right) &= 1 - \frac{1}{\pi}\arcsin \left( \frac{2}{\pi t} \right). 
\label{eq:cos_curves_half}
\end{align}
Analogously, by symmetry, the wave breaking curve emanating from $\xi  = \frac{5}{4}(2 + \pi^2)$ splits into two parts. Denote the corresponding Lagrangian curves by $\xi_{2, \textrm{l}}(t)$ and $\xi_{2, \textrm{r}}(t)$, respectively, then
\begin{align*}
    \bar{y} \left(\xi_{2, \textrm{l}}(t)\right) &= 2 + \frac{1}{\pi} \arcsin \left(\frac{2}{\pi t} \right), \\
    \bar{y} \left(\xi_{2, \textrm{r}}(t) \right) &= 3 - \frac{1}{\pi} \arcsin \left(\frac{2}{\pi t} \right). 
\end{align*}
Combining \eqref{eq:cos_curves_half} and \eqref{eq:cos_implicit}, yields an explicit expression for $\xi_{1, \textrm{l}}(t)$, in particular, 
\begin{equation*}
    \xi_{1, \textrm{l}}(t) = \frac{1}{2\pi}\left(2 + \pi^2\right)\arcsin\left(\frac{2}{\pi t}\right) - \frac{\pi}{4}\sin \left(2\arcsin\left(\frac{2}{\pi t} \right) \right), \hspace{0.2cm} t \geq \frac{2}{\pi}.  
\end{equation*}
Explicit expressions for the other Lagrangian wave breaking curves can be obtained completely analogously.

Now we compute $V(t, \xi)$ for $t\geq \frac{2}{\pi}$. The energy is unchanged for characteristics with labels $\xi < \xi_{1, \textrm{l}}(t)$. Thus, $V(t, \xi) = V_0(\xi)$ for $\xi \in (-\infty, \xi_{1, \textrm{l}}(t))$. 

For $\xi \in [\xi_{1, \textrm{l}}(t), \xi_{1, \textrm{r}}(t)]$, using \eqref{eq:cos_curves_half}, we obtain  
\begin{align*}
    V(t, \xi) &= V_0(\xi) - \frac{\pi^2}{2}\alpha \int_{\xi_{1, \mathrm{l}}(t)}^{\xi} \left(1 - \cos(2\pi \bar{y}(\xi) ) \right) \bar{y}_{\xi}(\xi) d\xi \\ &= V_0(\xi) - \frac{\pi^2}{2}\alpha \int_{\bar{y} \left(\xi_{1, \textrm{l}}(t)\right)}^{\bar{y}(\xi)} \left(1 - \cos(2\pi z) \right)dz \\
    &= \frac{\pi}{4} \left(1 - \alpha \right) \left(2\pi \bar{y}(\xi) -\sin \left(2\pi \bar{y}(\xi) \right) \right) \\ & \qquad - \frac{\pi}{4}\alpha \left( \sin \left(2 \arcsin \left(\frac{2}{\pi t} \right) \right) - 2\arcsin \left(\frac{2}{\pi t} \right)  \right) \! . 
\end{align*}
Inserting $\xi = \xi_{1, \mathrm{r}}(t)$ and using \eqref{eq:cos_curves_half}, we find that the total accumulated energy loss at $\xi_{1, \textrm{r}}(t)$ equals
\begin{equation*}
    V_{1, \textrm{d}}(t) = \frac{\pi}{2} \alpha \left( \pi + \sin\left(2 \arcsin \left(\frac{2}{\pi t} \right) \right) - 2\arcsin \left(\frac{2}{\pi t} \right)  \right).
\end{equation*}

For $\xi \in (\xi_{1, \textrm{r}}(t), \xi_{2, \textrm{l}}(t) )$, no wave breaking occurs, but one has to take the total accumulated energy loss up to the point $\xi_{1, \textrm{r}}(t)$ into consideration. Therefore, for $\xi \in (\xi_{1, \textrm{r}}(t), \xi_{2, \textrm{l}}(t))$, we have 
\begin{equation*}
 V(t, \xi) = V_0(\xi) - V_{1, \textrm{d}}(t). 
 \end{equation*}
The remaining cases for $\xi \geq \xi_{2, \textrm{l}}(t)$ follows by symmetry. To summarize, for $t \geq \frac{2}{\pi}$ we have
\begin{align}
\begin{aligned}
    V(t, \xi) &= \begin{cases}
        0, & \xi \leq 0, \\ 
        \frac{\pi}{4} (2\pi \bar{y}(\xi) - \sin(2\pi \bar{y}(\xi))), & 0 < \xi < \xi_{1, \textrm{l}}(t), \\
        \frac{\pi}{4}(1-\alpha)(2\pi \bar{y}(\xi) - \sin(2\pi \bar{y}(\xi))) \\
        \quad - \frac{\pi}{4}\alpha \left( \sin \left(2 \arcsin \left( \frac{2}{\pi t} \right) \right) - 2\arcsin \left(\frac{2}{\pi t} \right) \right), & \xi_{1, \textrm{l}}(t) \leq \xi \leq \xi_{1, \textrm{r}}(t), \\
        \frac{\pi}{4} (2\pi \bar{y}(\xi) - \sin(2\pi \bar{y}(\xi))) - \frac{\pi^2}{2}\alpha \\ \quad - \frac{\pi}{2}\alpha \left( \sin \left(2 \arcsin \left( \frac{2}{\pi t} \right) \right) - 2\arcsin \left(\frac{2}{\pi t} \right)   \right), & \xi_{1, \textrm{r}}(t) < \xi < \xi_{2, \textrm{l}}(t), \\
        \frac{\pi}{4} (1-\alpha)(2\pi \bar{y}(\xi) - \sin(2\pi \bar{y}(\xi))) + \frac{\pi^2}{2}\alpha \\ \quad - \frac{3\pi}{4}\alpha \left( \sin \left(2 \arcsin \left( \frac{2}{\pi t} \right) \right) - 2\arcsin \left( \frac{2}{\pi t} \right)  \right), & \xi_{2, \textrm{l}}(t) \leq \xi \leq \xi_{2, \textrm{r}}(t), \\ 
        \frac{\pi}{4}(2\pi \bar{y}(\xi) - \sin(2\pi \bar{y}(\xi))) - \pi^2 \alpha \\ \quad - \pi \alpha \left(\sin \left(2 \arcsin \left( \frac{2}{\pi t} \right) \right) - 2 \arcsin \left( \frac{2}{\pi t} \right) \right), & \xi_{2, \mathrm{r}}(t) < \xi \leq 4 + 2\pi^2, \\ 
        \pi^2(2-\alpha) 
        \\ \quad - \pi\alpha \left(\sin \left(2 \arcsin \left( \frac{2}{\pi t} \right) \right) - 2 \arcsin \left( \frac{2}{\pi t} \right) \right), & 4 + 2\pi^2 < \xi. 
    \end{cases}
    \raisetag{-15\normalbaselineskip}
\end{aligned}
\label{eq:cos_V_later}
\end{align}
The total Eulerian energy can be read off from \eqref{eq:cos_sol_before} and \eqref{eq:cos_V_later}, in particular,  
\begin{align*}
    F_{\infty}(t) &= \begin{cases}
        2 \pi^2, & 0 \leq t < \frac{2}{\pi}, \\ 
         \pi^2(2-\alpha) - \pi\alpha \left(\sin \left(2 \arcsin \left( \frac{2}{\pi t} \right) \right) - 2 \arcsin \left( \frac{2}{\pi t} \right) \right), & \frac{2}{\pi} \leq t.
     \end{cases}
\end{align*}
It remains to compute $U(t, \xi)$ and $y(t, \xi)$ for $t \geq \frac{2}{\pi}$. We only carry out the details for $U(t, \xi)$ in the interval $(0, \xi_{2, \textrm{l}}(t))$. To compute $U(t, \xi)$ the following definite integrals are needed, 
\begin{equation}
    \int_{\frac{2}{\pi}}^t \arcsin \left(\frac{2}{\pi s} \right)ds = - 1 + t\arcsin \left(\frac{2}{\pi t} \right) + \frac{2}{\pi}\arcosh\left (\frac{\pi}{2}t \right), 
    \label{eq:cos_int1}
\end{equation}
and 
\begin{align}
    \int_{\frac{2}{\pi}}^t \sin \left(2\arcsin \left(\frac{2}{\pi s} \right) \right)ds &= - \frac{4}{\pi} \ln \left (\frac{2}{\pi} \right) - \frac{4}{\pi} \sqrt{1 - \left(\frac{2}{\pi t}\right)^2} \nonumber \\ 
    & \qquad  + \frac{4}{\pi} \ln\left( \sqrt{t^2 - \left(\frac{2}{\pi}\right)^2} +t \right). \label{eq:cos_int2} 
\end{align}
Thus, by combining \eqref{eq:cos_V_later} with \eqref{eq:cos_int1} and \eqref{eq:cos_int2}, we define  
\begin{align}
    b(t) &:= \int_{\frac{2}{\pi}}^tV_{\infty}(s)ds \nonumber \\ & = \pi^2 \left(2-\alpha \right) \left(t - \frac{2}{\pi} \right) +2\pi \alpha  \left(t\arcsin \left(\frac{2}{\pi t} \right) - 1 + \frac{2}{\pi} \arcosh\left(\frac{\pi}{2}t \right) \right)  \nonumber \\ & \qquad + 4\alpha \left(\sqrt{1 - \left(\frac{2}{\pi t}\right)^2} + \ln \left(\frac{2}{\pi}\right) - \ln \left(\sqrt{t^2 - \left(\frac{2}{\pi}\right)^2} + t \right) \right) \!. 
\label{eq:cos_b}
\end{align}
For $\xi \in (0,  \xi_{1, \textrm{l}}(t) ) \cup (\xi_{1, \textrm{r}}(t), \xi_{2, \textrm{l}}(t))$, we will use 
\begin{equation}
    U(t, \xi) = U \left( \frac{2}{\pi}, \xi \right) + \frac{1}{2}\int_{\frac{2}{\pi}}^t \left(V(s, \xi) - \frac{1}{2}V_{\infty}(s) \right)ds.
    \label{eq:cos_rec_u}
\end{equation}
Consequently, for $\xi \in (0, \xi_{1, \textrm{l}}(t))$ we find, equipped with \eqref{eq:cos_b} and \eqref{eq:cos_rec_u}, 
\begin{equation}
    U(t, \xi) = \cos( \pi \bar{y}(\xi)) - \pi - \frac{1}{4}b(t) + \frac{\pi}{8} \left(2 \pi \bar{y}(\xi) - \sin \big(2 \pi \bar{y}(\xi) \big) \right)t. 
    \label{eq:cos_left}
\end{equation}
For $\xi \in (\xi_{1, \textrm{l}}(t),\xi_{1, \textrm{l}}(\frac{2}{\pi}))$ wave breaking has already occurred. Hence, instead of integrating \eqref{eq:integrated_ODE2} from $\frac{2}{\pi}$ to $t$, we integrate from $\tau(\xi)$ to $t$. Moreover, as $U$ is continuous in time, we compute $U(\tau(\xi), \xi)$ from \eqref{eq:cos_left}. Thus
\begin{align*}
    U(t, \xi) &= \cos(\pi \bar{y}(\xi)) - \pi  - \frac{1}{8} \left(b(t) + b \left(\tau(\xi) \right) \right) 
    \\& \quad + \frac{\pi}{8}  \left(2\pi \bar{y}(\xi) - \sin \left(2\pi \bar{y}(\xi) \right) \right) \left(t - \alpha (t-\tau(\xi)) \right) - \frac{\pi^2}{8}(2-\alpha)(t - \tau(\xi)). 
\end{align*}
Next, for $\xi \in (\xi_{1, \textrm{r}}(t), \xi_{2, \textrm{l}}(t))$ using \eqref{eq:cos_rec_u} we obtain
\begin{equation}
    U(t, \xi) = \cos(\pi \bar{y}(\xi)) - \frac{\pi^2}{2}t + \frac{\pi}{8} \left(2\pi \bar{y}(\xi) - \sin \left(2\pi \bar{y}(\xi) \right) \right)t.
    \label{eq:cos_right}
\end{equation}
Finally, as characteristics labeled by $\xi \in (\xi_{1, \textrm{l}}(\frac{2}{\pi}), \xi_{1, \textrm{r}}(t))$ have experienced wave breaking, using \eqref{eq:cos_right} and integrating \eqref{eq:integrated_ODE2} from $\tau(\xi)$ to $t$, we get 
\begin{align*}
    U(t, \xi) &= \cos(\pi \bar{y}(\xi) ) - \frac{1}{8} \big(b(t) - b(\tau(\xi)) \big) - \frac{\pi^2}{8} \big((2-\alpha)t + (2+\alpha)\tau(\xi)\big)
    \\ & \quad + \frac{\pi}{8}  \big(2\pi \bar{y}(\xi) - \sin(2\pi \bar{y}(\xi) ) \big) \big(t - \alpha \big(t - \tau(\xi) \big) \big). 
\end{align*}
Following the same lines of reasoning for the remaining cases, for $t \geq \frac{2}{\pi}$, yields
\begin{align*}
    U(t, \xi) &= \begin{cases}
        1 - \pi - \frac{1}{4}b(t), & \xi \leq 0, 
        \\
        \cos(\pi \bar{y}(\xi)) - \pi - \frac{1}{4}b(t) \\ \quad + \frac{\pi}{8}(2\pi \bar{y}(\xi) - \sin(2\pi \bar{y}(\xi)))t, & 0 < \xi \leq \xi_{1, \textrm{l}}(t), 
        \\ 
        \cos(\pi \bar{y}(\xi)) - \pi - \frac{1}{8}(b(t) + b(\tau(\xi))) 
        \\ 
        \quad + \frac{\pi}{8}(2\pi \bar{y}(\xi) - \sin(2\pi \bar{y}(\xi)))(t - \alpha (t-\tau(\xi))) 
        \\ \quad - \frac{\pi^2}{8}(2-\alpha)(t- \tau(\xi)), & \xi_{1, \textrm{l}}(t) < \xi \leq \xi_{1, \textrm{l}}(\tfrac{2}{\pi}), 
        \\
        \cos(\pi\bar{y}(\xi)) - \frac{1}{8}(b(t) -  b(\tau(\xi))) 
        \\ 
        \quad + \frac{\pi}{8}(2\pi \bar{y}(\xi) - \sin(2\pi \bar{y}(\xi)))(t - \alpha (t-\tau(\xi))) 
        \\ \quad - \frac{\pi^2}{8}((2 - \alpha)t +(2+\alpha)\tau(\xi)), & \xi_{1, \textrm{l}}(\tfrac{2}{\pi}) < \xi \leq \xi_{1, \textrm{r}}(t), 
        \\ 
        \cos(\pi \bar{y}(\xi)) - \frac{\pi^2}{2}t + \frac{\pi}{8}(2\pi \bar{y}(\xi) - \sin(2\pi \bar{y}(\xi)))t, & \xi_{1, \textrm{r}}(t) < \xi \leq  \xi_{2, \textrm{l}}(t), \\
        \cos(\pi \bar{y}(\xi))+ \frac{1}{8}(b(t) - b(\tau(\xi))) 
        \\ 
        \quad + \frac{\pi}{8}(2\pi \bar{y}(\xi) - \sin(2\pi \bar{y}(\xi)))(t - \alpha(t-\tau(\xi))) 
        \\ \quad - \frac{\pi^2}{8}((  6 - 5\alpha)t + (-2 + 5\alpha )\tau(\xi)), 
        & \xi_{2, \textrm{l}}(t) < \xi \leq \xi_{2, \textrm{l}}(\tfrac{2}{\pi}), \\
        \cos(\pi \bar{y}(\xi)) + \pi + \frac{1}{8}(b(t) + b(\tau(\xi))) 
        \\ 
        \quad + \frac{\pi}{8}(2\pi \bar{y}(\xi) - \sin(2\pi \bar{y}(\xi)))(t - \alpha(t-\tau(\xi))) 
        \\ \quad - \frac{\pi^2}{8}( (6-5\alpha)t + (2 + 5\alpha)\tau(\xi)), 
        & \xi_{2, \textrm{l}}(\tfrac{2}{\pi}) < \xi \leq \xi_{2, \textrm{r}}(t), \\
        \cos(\pi \bar{y}(\xi)) + \pi + \frac{1}{4}b(t) \\ \quad + \frac{\pi}{8}(2\pi \bar{y}(\xi) - \sin(2\pi \bar{y}(\xi)))t - \pi^2 t, & \xi_{2, \textrm{r}}(t) < \xi \leq 4 + 2\pi^2, \\
        1 + \pi + \frac{1}{4}b(t), & 4 + 2\pi^2 < \xi, 
    \end{cases}
\end{align*}
where $\xi_{1, \textrm{l}}(\frac{2}{\pi}) =  \frac{1}{4}(2 + \pi^2)$ and  $\xi_{2, \textrm{l}}(\frac{2}{\pi}) = \frac{5}{4}(2+ \pi^2)$. 
Define $B(t)$ by the following
\begin{align*}
    B(t) &:= \int_{\frac{2}{\pi}}^t b(s)ds 
    \\ 
    &=  \frac{\pi^2}{2}(2-\alpha) \left(t - \frac{2}{\pi} \right)^2 + 2\alpha(1 - \pi t) + 4\alpha t \ln \left(\frac{2}{\pi} \right) + 8 \alpha  \sqrt{t^2 - \left(\frac{2}{\pi}\right)^2}
    \\ 
    & \quad + \alpha \left(\pi t^2\arcsin \left( \frac{2}{\pi t} \right) - 2 \sqrt{t^2 - \left( \frac{2}{\pi}\right)^2} + 4 t \arcosh \left(\frac{\pi}{2}t \right) \right) 
    \\
    & \quad  - \alpha \bigg(\frac{8}{\pi} \arctan \left(\frac{\pi}{2}\sqrt{t^2 - \left(\frac{2}{\pi}\right)^2} \right) + 4t\ln \left(\sqrt{t^2 - \left(\frac{2}{\pi}\right)^2} + t \right). 
\end{align*}
Then, by integrating \eqref{eq:integrated_ODE1}, we obtain after some tedious calculations the following expression for $y(t, \xi)$, where we for convenience set $\bar{y} = \bar{y}(\xi)$ and $\tau = \tau(\xi)$, 
\begin{align*}
    y(t, \xi) &= \begin{cases}
        \xi + 1 + (1-\pi)t - \frac{1}{4}B(t), & \xi \leq 0, 
        \\ 
        \bar{y} + 1 + \cos(\pi \bar{y})t - \pi t - \frac{1}{4}B(t)
        \\ 
        \hspace{0.05cm}  + \frac{\pi}{16}(2\pi \bar{y} - \sin(2\pi \bar{y}))t^2, & 0 < \xi < \xi_{1, \textrm{l}}(t), 
        \\
        \bar{y} + 1 + \cos(\pi \bar{y})t - \pi t \\ \hspace{0.05cm} -\frac{1}{8}\left(B(t) + B(\tau) + b(\tau)(t -\tau) \right) 
        \\ 
        \hspace{0.05cm} + \frac{\pi}{16}(2\pi \bar{y} - \sin(2\pi \bar{y}))(t^2 - \alpha(t- \tau)^2) 
        \\ 
        \hspace{0.05cm} - \frac{\pi^2}{16}(2-\alpha)(t- \tau)^2 , & \xi_{1, \textrm{l}}(t) \leq \xi \leq \xi_{1, \textrm{l}}(\tfrac{2}{\pi}), 
        \\ 
        \bar{y} + \cos(\pi \bar{y})t - \frac{1}{8}(B(t) - B(\tau) - b(\tau)(t-\tau))\\ 
        \hspace{0.05cm} + \frac{\pi}{16}(2\pi \bar{y} - \sin(2\pi \bar{y}))(t^2 - \alpha (t- \tau)^2) 
        \\ 
        \hspace{0.05cm} - \frac{\pi^2}{16}((2-\alpha)t^2 +2(2+\alpha)t\tau + (-2 - \alpha) \tau^2), & \xi_{1, \textrm{l}}(\tfrac{2}{\pi}) < \xi \leq \xi_{1, \textrm{r}}(t), \\
        \bar{y} + \cos(\pi \bar{y})t - \frac{\pi^2}{4}t^2 + \frac{\pi}{16}(2\pi \bar{y} - \sin(2\pi \bar{y}))t^2, & \xi_{1, \textrm{r}}(t) < \xi < \xi_{2, \textrm{l}}(t), \\
        \bar{y} + \cos(\pi \bar{y})t + \frac{1}{8}(B(t) - B(\tau) - b(\tau)(t- \tau))
        \\
        \hspace{0.05cm} + \frac{\pi}{16}(2\pi \bar{y} - \sin(2\pi \bar{y}))(t^2 - \alpha (t- \tau)^2) 
       \\ \hspace{0.05cm}- \frac{\pi^2}{16}((6-5\alpha)t^2 +(-4+10\alpha)t\tau + (2-5\alpha)\tau^2), 
        & \xi_{2, \textrm{l}}(t) \leq \xi \leq \xi_{2, \textrm{l}}(\tfrac{2}{\pi}), \\ 
        \bar{y} -1 + \cos(\pi \bar{y})t + \pi t 
        \\ \hspace{0.05cm} + \frac{1}{8}(B(t) + B(\tau) + b(\tau)(t - \tau)) \\ 
        \hspace{0.05cm} + \frac{\pi}{16}(2\pi \bar{y} - \sin(2\pi \bar{y}))(t^2 - \alpha (t- \tau)^2) \\ 
       \hspace{0.05cm} - \frac{\pi^2}{16}((6-5\alpha)t^2 + (4 + 10\alpha) t \tau + (-2- 5\alpha)\tau^2),
         & \xi_{2, \textrm{l}}(\tfrac{2}{\pi}) < \xi \leq \xi_{2, \textrm{r}}(t), \\ 
        \bar{y} - 1 + \cos(\pi \bar{y})t + \pi t + \frac{1}{4}B(t) -\frac{1}{2}\pi^2 t^2 \\ 
        \hspace{0.05cm} + \frac{\pi}{16}(2\pi \bar{y} - \sin(2\pi \bar{y}))t^2, & \xi_{2, \textrm{r}}(t) < \xi \leq 4 + 2\pi^2, \\ 
        \xi - 2\pi^2 - 1 + (1+\pi)t + \frac{1}{4}B(t), & 4 + 2\pi^2 < \xi. 
    \end{cases}
\end{align*}

\end{document}